\newtheorem{thm}{Theorem}[section]
\newtheorem{lem}[thm]{Lemma}
\newtheorem{cor}[thm]{Corollary}
\newtheorem{prop}[thm]{Proposition}
\newtheorem{thmx}{Theorem}
\theoremstyle{definition}
\newtheorem*{defi}{Definition}
\newtheorem*{rmk}{Remark}
\newtheorem*{ques}{Question}
\newcommand{\EC}{\widehat{\mathbb{C}}}
\newcommand{\C}{\mathbb{C}}
\newcommand{\D}{\mathbb{D}}
\newcommand{\N}{\mathbb{N}}
\newcommand{\Q}{\mathbb{Q}}
\newcommand{\R}{\mathbb{R}}
\newcommand{\T}{\mathbb{T}}
\newcommand{\Z}{\mathbb{Z}}
\newcommand{\MC}{\mathcal{C}}
\newcommand{\MO}{\mathcal{O}}
\newcommand{\MP}{\mathcal{P}}
\newcommand{\MU}{\mathcal{U}}
\newcommand{\ii}{\textup{i}}
\newcommand{\re}{\textup{Re}\,}
\newcommand{\im}{\textup{Im}\,}
\newcommand{\Crit}{\textup{Crit}}
\newcommand{\diam}{\textup{diam}}
\newcommand{\dist}{\textup{dist}}
\newcommand{\Mod}{\textup{mod}\,}
\newcommand{\Int}{\textup{int}}
\newcommand{\Par}{\textup{Par}}
\newcommand{\Sie}{\textup{Sie}}
\newcommand{\mv}{\mathbf{v}}
\newcommand{\mm}{\mathbf{m}}
\makeatletter\@addtoreset{equation}{section}\makeatother
\begin{document}

\author[Yuming Fu]{Yuming Fu}
\address{College of Mathematics and Statistics, Shenzhen University, Shenzhen 518061, P. R. China}
\email{yumingfuxy@szu.edu.cn}

\author[Fei Yang]{Fei Yang}
\address{Department of Mathematics, Nanjing University, Nanjing 210093, P. R. China}
\email{yangfei@nju.edu.cn}

%---------------------------------------------------------------------------------------------------------------
\title[Mating Siegel and parabolic polynomials]{Mating Siegel and parabolic quadratic polynomials}

\begin{abstract}
Let $f_\theta(z)=e^{2\pi\ii\theta}z+z^2$ be the quadratic polynomial having an indifferent fixed point at the origin. For any bounded type irrational number $\theta\in\R\setminus\Q$ and any rational number $\nu\in\Q$, we prove that $f_\theta$ and $f_\nu$ are conformally mateable, and that the mating is unique up to conjugacy by a M\"{o}bius map. This gives an affirmative (partial) answer to a question raised by Milnor in 2004.

A crucial ingredient in the proof relies on an expansive property when iterating certain rational maps near Siegel disk boundaries. Combining this with the expanding property in repelling petals of parabolic points, we also prove that the Julia sets of a class of Siegel rational maps with parabolic points are locally connected.
\end{abstract}

% AMS subject classifications (used in AMS journals)
\subjclass[2020]{Primary: 37F10; Secondary: 37F20, 37F50}

% AMS keywords (used in AMS journals)
\keywords{Julia set; mating; Siegel disk; parabolic point; local connectivity}

% today's date, or fill in whatever date you prefer
\date{\today}

% acknowledge support, etc
% \thanks{This research was partially supported by NSF grant DOA-123456789.}
% \thanks{We would like to thank our colleagues for their helpful criticism.}

% dedication
% \dedicatory{Dedicated to Professor Donald Knuth on the occasion of his $100$th birthday}

\maketitle

%----------------------------------------------------------------------------------------------------------------
%\vskip1.0cm
%{\setcounter{tocdepth}{1}
%\tableofcontents
%}
%\tableofcontents
%----------------------------------------------------------------------------------------------------------------

\section{Introduction}\label{introduction}

\subsection{Mating and definitions}\label{subsec:mating-defi}

In the 1980s, Douady and Hubbard observed that the dynamics of some rational maps can be well decomposed into that of a pair of polynomials.
In order to partially parametrize quadratic rational maps by a pair of quadratic polynomials, they introduced a topological construction which is called \emph{mating} \cite{Dou83}.
There are several types of mating. We first recall the definitions of some of them. One may refer to \cite{PM12} for a nice illustration on this topic.

Let $f_1$ and $f_2$ be two monic quadratic polynomials whose \textit{filled Julia sets} $K(f_i)$ are locally connected, where $i=1,2$. The B\"{o}ttcher map $\Phi_i$ is a conformal isomorphism with $\Phi_i(\infty)=\infty$ and $\Phi_i'(\infty)=1$ which maps the \textit{basin of infinity} $\EC\setminus K(f_i)$ onto the outside of the closed unit disk $\EC\setminus \overline{\D}$ such that
\begin{equation}
\Phi_i(f_i(z))=(\Phi_i(z))^2 \text{\quad for all }z\in\EC\setminus K(f_i).
\end{equation}
By Carath\'{e}odory's theorem, the inverse map $\Phi_i^{-1}$ has a continuous extension to their corresponding boundaries:
$\Phi_i^{-1}: \partial \D\to J(f_i)$, where $J(f_i)=\partial K(f_i)$ is the \textit{Julia set} of $f_i$. The continuous parametrization
\begin{equation}
\eta_i(t)=\Phi_i^{-1}(e^{2\pi \ii t}):\T=\R/\Z \to J(f_i)
\end{equation}
is known as the \textit{Carath\'{e}odory loop} of $J(f_i)$.
We obtain a new topological space
\begin{equation}
X=(K(f_1)\sqcup K(f_2))/(\eta_1(t)\sim \eta_2(-t))
\end{equation}
by gluing the filled Julia sets $K(f_1)$ and $K(f_2)$ in reverse directions,
where the equivalence relation $\sim$ is referred as \textit{ray equivalence}, and denoted by $\sim_r$. If $X$ is homeomorphic to the $2$-sphere $S^2$, then the pair of polynomials $(f_1,f_2)$ is callled \emph{topologically mateable}. The induced map
\begin{equation}
f_1 \sqcup_{\mathcal{T}} f_2= (f_1|_{K(f_1)})\sqcup (f_2|_{K(f_2)}) /(\eta_1(t)\sim \eta_2(-t))
\end{equation}
is the \emph{topological mating} of $f_1$ and $f_2$.

If, in addition, there exist a quadratic rational map $F:\EC\to\EC$ and a homeomorphism $\Phi:S^2\to \EC$ such that $\Phi$ conjugates $f_1 \sqcup_{\mathcal{T}} f_2$ to $F$ and that $\Phi$ is conformal in the interiors of $K(f_1)$ and $K(f_2)$ in case there is an interior, then $f_1$ and $f_2$ are said to be \textit{conformally mateable}.
The quadratic rational map $F$ is called \textit{a conformal mating} of $f_1$ and $f_2$.
If such $F$ is unique up to conjugacy by a M\"{o}bius map, we refer to it as \textit{the conformal mating} of $f_1$ and $f_2$.
When proving theorems in practice, the following equivalent definition is more convenient for application (see \cite{YZ01} and \cite{PM12}).

\begin{defi}[{conformal mating}]
Let $f_1$ and $f_2$ be quadratic polynomials with locally connected Julia sets. A quadratic rational map $F:\EC\to\EC$ is called a \textit{conformal mating} $f_1\sqcup f_2$ of $f_1$ and $f_2$ if there exist continuous semiconjugacies
\begin{equation}
\phi_i:K(f_i)\to \EC \text{\quad with \quad} \phi_i\circ f_i=F\circ \phi_i,
\end{equation}
conformal in the interiors of the filled Julia sets in case there is an interior, such that $\phi_1(K(f_1))\cup \phi_2(K(f_2))=\EC$ and for $i,j=1,2$, $\phi_i(z)=\phi_j(w)$ if and only if $z\sim_r w $.
\end{defi}

\subsection{History and results}

If two quadratic polynomials $f_1$ and $f_2$ are in conjugate limbs of the Mandelbrot set with locally connected Julia sets, then even the topological mating of $f_1$ and $f_2$ is impossible since $(K(f_1)\sqcup K(f_2))/\sim_r$ is not homeomorphic to the $2$-sphere.
By using Thurston's topological characterization of critically finite rational maps \cite{DH93}, Tan Lei, Rees and Shishikura proved that any two pair of subhyperbolic quadratic polynomials are conformally mateable if they are not in conjugate limbs of the Mandelbrot set (\cite{Tan92}, \cite{Ree86c}, \cite{Ree92}, \cite{Shi00b}).
In particular, when both of the quadratic polynomials have a strictly preperiodic critical point, then their filled Julia sets have empty interiors and the Julia set after mating is the whole Riemann sphere. See \cite{Mil04} for an interesting illustration. The mateable results on subhyperbolic quadratics has been extended to geometrically finite case by parabolic surgery \cite{HT04}.

The first example of mating quadratic polynomials without using Thurston's theorem was given in \cite{Luo95}. The approach involved consists of locating a candidate rational map for the mating, and then using Yoccoz puzzle partitions and complex bounds of Yoccoz to prove that this candidate rational map is a mating.
Recently, one of the representatives in this direction is the mating of $z\mapsto z^2-1$ (the corresponding Julia set is known as the \textit{basilica}) with some other quadratic polynomials (See \cite{Tim08}, \cite{AY09}, \cite{Dud11}). In the proofs, the parameter space of the family of quadratic rational maps consisting of a super-attracting $2$-cycle was studied, and the candidate of mating was found in the parameter space by using parapuzzles.

\medskip
For the quadratic polynomial having an indifferent fixed point at the origin:
\begin{equation}
f_\theta(z):=e^{2\pi\ii\theta}z+z^2, \text{\quad where } \theta\in\R,
\end{equation}
Milnor raised the following question in \cite[p.\,61]{Mil04}:

\begin{ques}[{Milnor}]
Suppose the Julia sets of $f_\theta$ and $f_\nu$ are locally connected with $e^{2\pi\ii(\theta+\nu)}\neq 1$.
Does the conformal mating $f_{\theta}\sqcup f_{\nu}$ exist?
\end{ques}

This question has been answered in the following cases\footnote{Zhang has answered Milnor's question affirmatively when $\theta$ and $\nu$ are typical irrational numbers.
See \href{https://www.math.univ-toulouse.fr/~cheritat/Exposes/Banff.pdf}{https://www.math.univ-toulouse.fr/$\sim$cheritat/Exposes/Banff.pdf}, 2011.}:
\begin{itemize}
\item Both $\theta$ and $\nu$ are bounded type irrational numbers \cite{YZ01};
\item Both $\theta$ and $\nu$ are rational numbers \cite{HT04}.
\end{itemize}
Note that if $\theta$ is of bounded type, then $f_\theta$ has a Siegel disk centered at the origin \cite{Sie42}, and the Julia set of $f_\theta$ is locally connected \cite{Pet96} (see also \cite{Yam99}). If $\nu$ is a rational number, then $f_\nu$ has a parabolic fixed point at the origin, and the Julia set of $f_\nu$ is also locally connected \cite[Chap.\,10]{DH85a}. See Figure \ref{Fig_Julias}.

\begin{figure}[!htpb]
 \setlength{\unitlength}{1mm}
  \centering
  \includegraphics[width=0.85\textwidth]{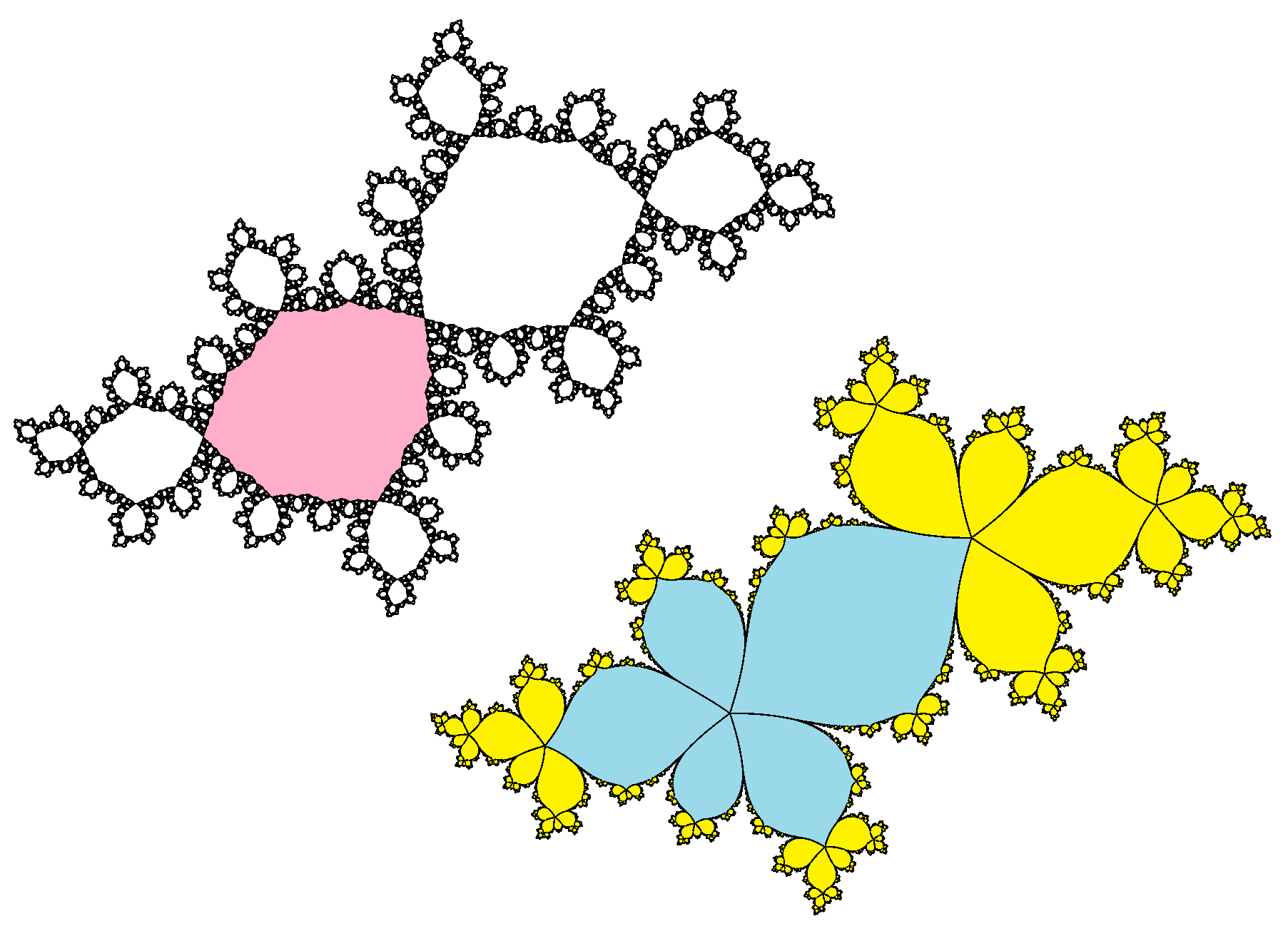}
  \caption{The Julia sets of $f_\theta$ with a Siegel disk (colored pink) and $f_\nu$ with a parabolic point (whose immediate parabolic basins are colored cyan), where $\theta=(\sqrt{5}-1)/2$ and $\nu=3/5$. They are both locally connected.}
  \label{Fig_Julias}
\end{figure}

In this paper, we consider the \textit{mixture} of the above known two cases: mating a bounded type Siegel quadratic polynomial $f_\theta$ with a parabolic quadratic polynomial $f_\nu$. Our first result is:

\begin{thmx}\label{thm:mateable}
If $\theta\in\R\setminus\Q$ is of bounded type and $\nu\in\Q$, then $f_\theta$ and $f_\nu$ are conformally mateable, and the mating is unique up to conjugacy by a M\"{o}bius map.
\end{thmx}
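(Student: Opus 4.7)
The plan is to synthesize the Yampolsky-Zakeri approach for mating bounded type Siegel quadratics \cite{YZ01} with the parabolic surgery paradigm of \cite{HT04}. First I would locate a candidate quadratic rational map $F$ inside the one-complex-parameter slice $\textup{Per}_1(e^{2\pi\ii\theta})$ of quadratic rational maps that fix the origin with multiplier $e^{2\pi\ii\theta}$, parametrized by the multiplier $\lambda$ at the second fixed point. Inside this slice I would select the parameter at which $\lambda=e^{2\pi\ii\nu}$ and whose combinatorial structure (the arrangement of Fatou components and external-ray landings) matches the data dictated by the ray equivalence $\sim_r$ for the pair $(f_\theta, f_\nu)$. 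Existence of such a parameter can be arranged by a parapuzzle argument in the connectedness locus of the slice, or equivalently by performing a quasiconformal surgery on the parabolic-parabolic mating provided by \cite{HT04} that replaces one parabolic Fatou component by a Siegel disk of rotation number $\theta$ using a Blaschke-product model.

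Next I would verify that $F$ has the expected global structure: a Siegel disk at $0$ of rotation number $\theta$ with quasi-circle boundary passing through a finite critical point (in the Petersen--Zakeri regime), a parabolic fixed point of multiplier $e^{2\pi\ii\nu}$ whose other critical point lies in the cycle of immediate parabolic basins, and, most importantly, a locally connected Julia set. Local connectivity is the Main Lemma of the paper and, as the abstract advertises, is established through an expansive property of iterates near the Siegel disk boundary combined with the classical expansion in the repelling petals of the parabolic point. With $J(F)$ locally connected, one can construct continuous semiconjugacies $\phi_\theta\colon K(f_\theta)\to\EC$ and $\phi_\nu\colon K(f_\nu)\to\EC$: set $\phi_\theta$ equal to the canonical conformal identification between the two Siegel disks (both linearize the same irrational rotation), set $\phi_\nu$ equal to the canonical identification of the corresponding parabolic basin cycles (both share attracting Fatou coordinates), and propagate to all of $K(f_i)$ by pulling back through $\phi_i\circ f_i = F\circ\phi_i$; the Carath\'{e}odory loops provided by local connectivity ensure continuous matching across Fatou boundaries, and the identification $\phi_\theta(z)=\phi_\nu(w)\Leftrightarrow z\sim_r w$ is then enforced by the combinatorial specification of $F$.

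For uniqueness, I would run a standard qc-rigidity argument: any two conformal matings of $f_\theta$ and $f_\nu$ share the same internal and external combinatorial class, their Fatou components are conformally identified, and the resulting qc-conjugacy is forced to be M\"{o}bius by the absence of invariant line fields on the Julia set (the irrational Siegel rotation and classical parabolic rigidity both preclude such fields). The main technical obstacle throughout is the Main Lemma on local connectivity: the Petersen-style expansion near a Siegel boundary relies on hyperbolic contraction of inverse orbits away from the post-critical set, which degenerates at a parabolic fixed point, and the key innovation needed is to show that the repelling-petal expansion can be coupled with the Siegel-boundary expansion to simultaneously control both non-hyperbolic phenomena.
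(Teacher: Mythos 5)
Your outline is broadly in the right spirit, but it diverges from the paper's route at the very start and glosses over the step where the real work happens.

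\textbf{Locating the candidate.} You propose finding the candidate map by a parapuzzle search in $\textup{Per}_1(e^{2\pi\ii\theta})$ or by a parabolic surgery on the mating from \cite{HT04}. Neither is needed: inside the slice $\textup{Per}_1(e^{2\pi\ii\theta})$ the parameter with second-fixed-point multiplier $e^{2\pi\ii\nu}$ is \emph{already unique up to M\"obius conjugacy}, because a quadratic rational map has three fixed points and the holomorphic index relation determines the third multiplier from the first two. The paper simply writes the normalized representative down explicitly, $F_{\theta,\nu}(z)=(e^{2\pi\ii\nu}z+z^2)/(1+e^{2\pi\ii\theta}z)$, and there is no combinatorial selection to perform. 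This same observation is what makes the uniqueness statement immediate in the paper; your suggested qc-rigidity/no-invariant-line-field argument is a heavy machine where a one-line classification of quadratic rational maps by multipliers suffices. Moreover, the paper goes out of its way to state that it does \emph{not} use Blaschke models, complex a priori bounds, or parabolic surgery, precisely the three tools at the center of your plan; those were the route Yampolsky--Zakeri suggested, and one selling point of this paper is that it sidesteps them via contraction estimates.

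\textbf{The gap.} The sentence ``the identification $\phi_\theta(z)=\phi_\nu(w)\Leftrightarrow z\sim_r w$ is then enforced by the combinatorial specification of $F$'' hides the bulk of the proof. Once the semiconjugacies $\phi_\theta,\phi_\nu$ exist (you are right that they are built from the linearizing and Fatou coordinates and then propagated by pullback, using Lemma \ref{limb}-type shrinking of limbs to get continuity), one still has to \emph{prove} that the identifications they induce coincide exactly with ray equivalence. The paper spends all of Section \ref{sec:defi} setting up a combinatorial address system for preimages of the Siegel disk of $f_\theta$ and of the immediate parabolic basins of $f_\nu$ (drops, roots, limbs, drop-chains, drop-rays, spines, two- and $p$-valued itineraries), and the proof of Lemma \ref{semiconjugacy}(b) then runs a delicate four-case analysis on the spines $\Sigma_\theta\cup\Sigma_\nu$ inside $J(F_{\theta,\nu})$ matching $\Sigma_\theta$-itineraries with $\Sigma_\nu$-itineraries, using Propositions \ref{prop:YZ-Siegel} and \ref{prop:parabolic} to conclude that a pair $(z,w)$ is identified iff $z=\eta_\theta(t)$ and $w=\eta_\nu(-t)$. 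This is where the parabolic structure (the $p$ petals and the $p$ external rays landing at each preimage of the parabolic point) interacts nontrivially with the Siegel structure, and nothing in your outline explains why the identification class agrees with $\sim_r$ rather than being coarser or finer. Without some replacement for this verification your argument does not close.

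\textbf{What you got right.} You correctly identified that the crux is local connectivity of $J(F_{\theta,\nu})$, and that the obstacle is coupling the expansion near a bounded-type Siegel boundary (the Main Lemma of \cite{WYZZ22}, Lemma \ref{main-lem-WYZZ}) with the repelling-sector shrinking at the parabolic point; this is exactly what Lemma \ref{lem:parabolic-rat} and Lemma \ref{lem:contrac} do, and Lemma \ref{limb} is the consequence (shrinking of limbs) that feeds the continuity of $\phi_\theta,\phi_\nu$. You also correctly describe the construction of $\phi_\theta,\phi_\nu$ by conformal identification of the fixed Fatou components and pullback. So the skeleton is sound; the missing organ is the combinatorial bookkeeping that turns ``both maps have the same itinerary data'' into the precise ray-equivalence statement required by the definition of conformal mating.
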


This gives an affirmative (partial) answer to the above question of Milnor.
In other words, one can paste the types of filled Julia sets in Figure \ref{Fig_Julias} along their boundaries in reverse directions to obtain a $2$-sphere, and the actions of the polynomials on their filled Julia sets match up to give an action on the sphere which is conjugate to a quadratic rational map with one fixed Siegel disk and one fixed parabolic point. See Figure \ref{Fig_Mating}.

\begin{figure}[!htpb]
 \setlength{\unitlength}{1mm}
  \centering
  \includegraphics[width=0.6\textwidth]{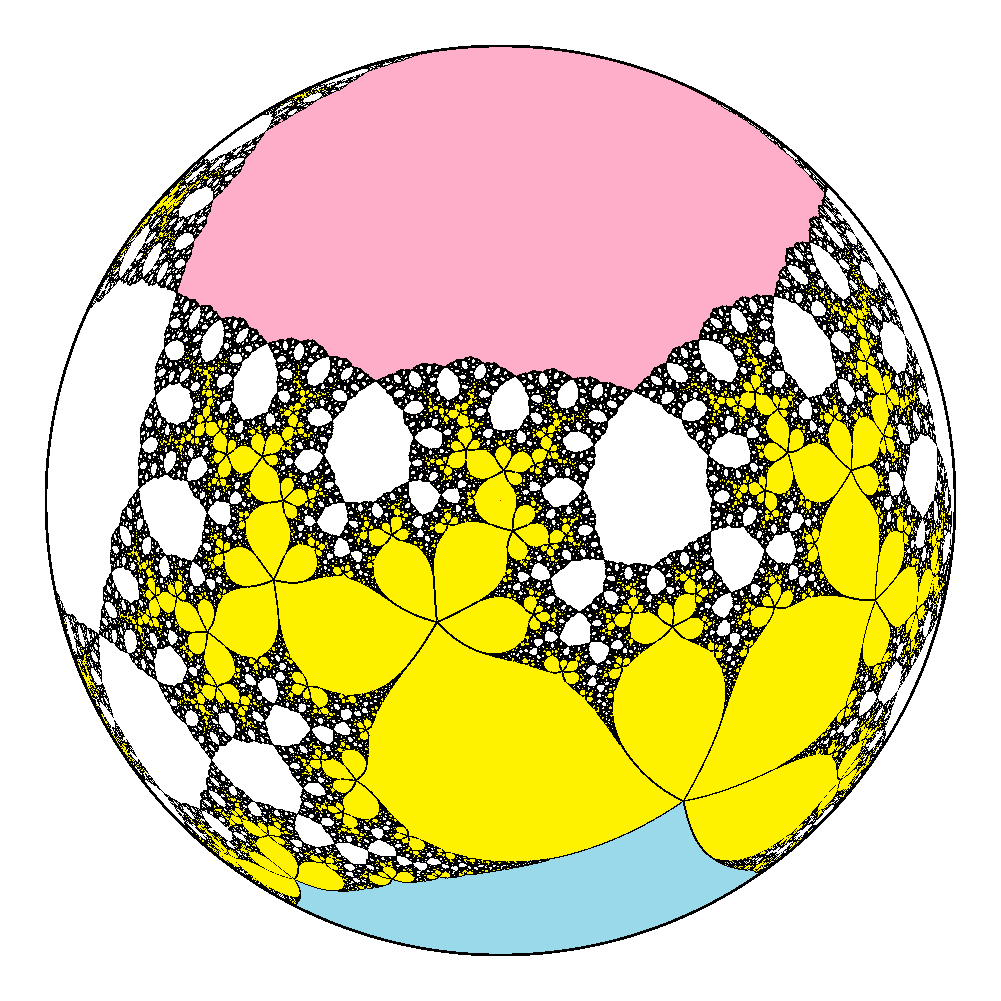}
  \caption{The Julia set of the mating $f_\theta\sqcup f_\nu$ on the Riemann sphere, where $\theta=(\sqrt{5}-1)/2$ and $\nu=3/5$. See also Figure \ref{Fig_Mating-plane}.}
  \label{Fig_Mating}
\end{figure}

\medskip
As an effective tool to construct interesting dynamics of rational maps, mating is also considered for pairs of higher degree polynomials.
Because of the emergence of more critical orbits and more combinatorics, there are some essential differences between the mating of higher degree polynomials and the quadratic case \cite{ST00b}. See \cite{Tan97}, \cite{Che12}, \cite{Sha13}, \cite{AR16}, \cite{Sha19} and \cite{Sha23} for the mating of special cubic polynomials.

In certain cases, it is difficult to obtain the explicit formulas of rational maps after mating. There are some efficient algorithms which can be used to compute their numerical approximations, see \cite{BH12}, \cite{Jun17} and \cite{Wil19}.
For some other results on the mating of polynomials, see \cite{Eps98}, \cite{Pet99}, \cite{BV06}, \cite{CPT12}, \cite{Mey14}, \cite{Yan15}, \cite{Wil16}, \cite{Ma19}, \cite{Luo22} and the references therein.
For more questions on polynomial matings, see \cite{BEKMPRT12}.

\subsection{Contraction property and local connectivity}

Before stating the strategy of the proof of Theorem \ref{thm:mateable}, we first recall the main idea in \cite{YZ01}, where the conformally mateable of two non-conjugate bounded type Siegel quadratic polynomials $f_\theta$ and $f_\nu$ was proved. The following quadratic rational map
\begin{equation}
F_{\theta,\nu}(z):=\frac{e^{2\pi\ii \nu}z+z^2}{1+e^{2\pi\ii \theta}z}
\end{equation}
serves naturally as the candidate of the mating of $f_\theta$ and $f_\nu$.
In view of the definition of conformal mating, the most important ingredient in the proof is to construct two continuous semiconjugacies $\phi_1$ and $\phi_2$ from the filled Julia sets $K(f_\theta)$ and $K(f_\nu)$ to the dynamical space of $F_{\theta,\nu}$.
Based on the local connectivity result of Petersen \cite{Pet96}, the crucial ingredient reduces to showing that the spherical diameter of the ``limbs" stretched out from the Siegel disks of $F_{\theta,\nu}$ tends to zero. In \cite{YZ01} this was carried out by providing a cubic Blaschke product model for $F_{\theta,\nu}$, and then using complex a prior bounds which was established in \cite{Yam99} to control the size of the ``limbs".
Such a similar idea has been used to the proof of mating Siegel quadratic polynomial with some other polynomials (see \cite{BV06}, \cite{Yan15}).

We have mentioned that when $\theta$ and $\nu$ are rational numbers, the mateable result has been established in \cite{HT04} by parabolic surgery. Hence for the proof of Theorem \ref{thm:mateable}, it seems that all necessary tools are already available. In fact, in \cite[p.\,31]{YZ01}, Yampolsky and Zakeri suggested that one may use the techniques developed in \cite{YZ01}, combined with the parabolic surgery developed in \cite{Hai98}, to handle the matings of parabolic quadratics with the bounded type Siegel quadratics.

In this paper, to prove Theorem \ref{thm:mateable} we shall use a \textit{different} argument from \cite{YZ01}. In particular, we don't rely on analytic Blaschke models and complex a prior bounds, nor do we rely on parabolic surgery. Instead, we use an expansive property near the Siegel disks which was established recently in the Main Lemma of \cite{WYZZ22} (see Lemma \ref{main-lem-WYZZ}). Our argument is based on the contraction property when pulling back certain rational maps near the bounded type Siegel disk boundaries. Such a contraction was used to prove the local connectivity of the Julia sets of certain rational maps with bounded type Siegel disks. The proof of Theorem \ref{thm:mateable} is based on combining this contraction property and the shrinking property in repelling sectors at parabolic points when pulling back the maps. As a by-product, we also prove that the Julia sets of some Siegel rational maps with parabolic points are locally connected.

\begin{thmx}\label{thm:local-connectivity}
Suppose $f$ is a Siegel rational map having a connected Julia set $J(f)$, and that the forward orbit of every critical point of $f$ satisfies one of the following:
\begin{enumerate}
\item It is finite; or
\item It lies in an attracting or a parabolic basin; or
\item It intersects the closure of a bounded type Siegel disk.
\end{enumerate}
Then $J(f)$ is locally connected.
\end{thmx}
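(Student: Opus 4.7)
The plan is to reduce local connectivity of $J(f)$ to a shrinking condition on Fatou components, and then verify shrinking by splitting the dynamical plane into three regimes in which distinct contraction mechanisms apply. Since $J(f)$ is connected, it is locally connected provided (i) the boundary of every Fatou component is a Jordan curve and (ii) for every $\epsilon>0$ only finitely many Fatou components have spherical diameter greater than $\epsilon$. This is a classical criterion, and statement (ii) is the main content.

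For (i), boundaries of attracting and parabolic periodic Fatou components are Jordan curves under hypothesis (b) by classical linearization arguments with Koenigs and Fatou coordinates respectively, together with the fact that the critical points converging to such a cycle avoid a neighborhood of the boundary. For a bounded-type Siegel disk $\Delta$ whose closure meets the critical orbit (hypothesis (c)), the Main Lemma of \cite{WYZZ22}, recalled as Lemma \ref{main-lem-WYZZ}, yields an expansive property of $f$ on a neighborhood of $\partial \Delta$, from which a Petersen-type argument shows that $\partial \Delta$ is a Jordan curve. Every other Fatou component is eventually mapped univalently onto a periodic one, so its boundary is a Jordan curve as well.

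For (ii), I would cover a neighborhood of the post-critical set of $f$ by three regions: a neighborhood $N_\Delta$ of each bounded-type Siegel disk boundary, a union $N_{\mathrm{par}}$ of repelling petals at each parabolic cycle, and a hyperbolic region $N_{\mathrm{hyp}}$ away from the accumulation of the post-critical set. On $N_{\mathrm{hyp}}$, inverse branches of $f$ contract in the hyperbolic metric of the complement of the post-critical set by a uniform factor strictly less than one. Inside a repelling petal, the Fatou coordinate semiconjugates the dynamics to a translation, so pullbacks shrink at the parabolic rate, and orbits cannot remain trapped in a single petal. On $N_\Delta$ the essential tool is the expansive property of Lemma \ref{main-lem-WYZZ}: forward iterates of any piece of $J(f)$ that touches $\partial \Delta$ must leave $N_\Delta$ within a bounded number of steps, which translates into contraction of the inverse branches that produce Fatou components near $\partial \Delta$. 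A standard orbit-bookkeeping argument combining the three contributions then gives uniform shrinking of pullback diameters, hence of Fatou component diameters, establishing (ii).

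The main obstacle is the Siegel regime. Near $\partial \Delta$ the post-critical set may accumulate densely, so the classical hyperbolic-metric contraction of inverse branches degenerates; at the same time, the dynamics on $\partial \Delta$ itself is an irrational rotation with no forward expansion along the boundary. The replacement of hyperbolic contraction by the forward expansion furnished by Lemma \ref{main-lem-WYZZ} is precisely the new input. Integrating this with the parabolic-petal shrinking and the hyperbolic contraction away from the critical accumulation, and verifying that the pullback orbit of every Fatou component spends enough steps in the combined contraction regions, is the principal technical step of the proof.
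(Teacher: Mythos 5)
Your overall strategy — Whyburn's criterion split into (i) local connectivity of each Fatou component boundary and (ii) shrinking of Fatou component diameters, and then a three-regime decomposition (Siegel neighborhoods, repelling petals, hyperbolic region) for the shrinking — is the same as the paper's. The treatment of (ii) in your proposal is essentially correct in spirit: the paper combines the classical shrinking lemma, a shrinking lemma in thin repelling sectors (Lemma \ref{lem:parabolic-rat}), and the expansive property of Lemma \ref{main-lem-WYZZ} into a single statement (Lemma \ref{lem:contrac}), which is the orbit-bookkeeping you describe.

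However, there is a genuine gap in your treatment of (i). You assert that boundaries of attracting and parabolic Fatou components are Jordan curves "by classical linearization arguments with Koenigs and Fatou coordinates, together with the fact that the critical points converging to such a cycle avoid a neighborhood of the boundary." This is not correct, and the failure is precisely where the difficulty lies. First, what is needed is local connectivity of the boundary, not that the boundary is a Jordan curve: if the basin is completely invariant, its boundary equals $J(f)$ itself, so one would be assuming the theorem. Second, Koenigs/Fatou coordinates describe the interior dynamics and give a parametrization of approximating curves (the paper's $\partial Q^{(n)}$), but they give no uniform convergence of those curves to the boundary. Third, the fact that the critical orbit in the basin stays bounded away from $\partial U$ does not protect $\partial U$ from the postcritical set: other critical orbits may accumulate on Siegel disk boundaries or on parabolic points, and the Julia "hair" attached to those boundaries can land on $\partial U$. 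In that regime the hyperbolic metric of the complement of the postcritical set degenerates, which is exactly why the classical argument breaks down. The paper replaces the hyperbolic contraction by expansion in an orbifold metric on a carefully chosen domain $W_1$ (Lemma \ref{lem:expand-orbifold}), and proves uniform convergence of the approximating curves by deforming ray segments so that they decompose into a bounded "essential" part away from the Siegel and parabolic accumulation, plus short "initial" and "end" arcs that can be controlled by the three shrinking lemmas (Lemma \ref{lem:essential-bd} and Proposition \ref{prop:attracting}). This step is a substantial piece of work that your proposal collapses to a one-line citation; without it, (i) remains unproved, and the reduction to (ii) does not close.
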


Note that Theorem \ref{thm:local-connectivity} is a slight improvement of the Main Theorem of \cite{WYZZ22}, where the parabolic basin is not allowed.
By the assumption, the rational maps in Theorem \ref{thm:local-connectivity} can be regarded as a mixture of bounded type Siegels and geometrically finite ones.
In particular, Theorem \ref{thm:local-connectivity} gives an alternative proof of a partial result of \cite[Theorem 4(3)]{Roe10}, where the local connectivity of the Julia sets of cubic polynomials having a parabolic fixed point and a bounded type Siegel disk was proved.

\medskip
Recently, a Thurston theorem concerning the topological characterization of the rational maps with bounded type Siegel disk was established in \cite{Zha22}. Combining this and the contraction property near the Siegel disks mentioned above, one may conclude that the bounded type Siegel and subhyperbolic quadratic polynomials are conformally mateable.

\subsection{Organization of the paper}

In \S\ref{sec:lc}, we survey the dynamics near parabolic points, introduce suitable orbifold metric and prove that the boundary of every immediate attracting and parabolic basin of $f$ in Theorem \ref{thm:local-connectivity} is locally connected based on the contraction property near Siegel disk boundaries and the shrinking property in repelling sectors. Then Theorem \ref{thm:local-connectivity} is proved by using the contraction properties again based on Whyburn's criterion on the local connectivity.

In \S\ref{sec:defi}, we discuss the combinatorics of the maps $f_{\theta}$ and $f_{\nu}$ in Theorem \ref{thm:mateable} and give an address for each preimage of Siegel disk under $f_{\theta}$ and each preimage of immediate parabolic basins under $f_{\nu}$. In particular, the ``limbs'' and ``itinearies'' of both types of polynomials are defined.

In \S\ref{proof}, we emphasize that the rational map $F_{\theta,\nu}$ having a Siegel disk with rotation number $\theta$ and a parabolic fixed point with multiplier $e^{2\pi\ii \nu}$ serves the model map of the mating of $f_{\theta}$ and $f_{\nu}$. We prove that the diameters of analogous definition of ``limbs'' of $F_{\theta,\nu}$ goes to zero as the depth goes to infinity. Based on this, we prove Theorem \ref{thm:mateable} by verifying all the conditions in the definition of the conformal mating.

\medskip
\noindent\textbf{Acknowledgements.}
The authors would like to thank Guizhen Cui and Gaofei Zhang for helpful conversations and comments. This work was supported by NSFC (Grant Nos. 12222107 and 12071210).

\section{Contraction near Siegel and parabolic boundaries} \label{sec:lc}

In this section, we collect several types of contraction properties: the contraction near parabolic points, near Siegel disk boundaries and the global contraction with respect to the orbifold metric, to prove Theorem \ref{thm:local-connectivity} without using puzzles. The contraction established in this section will be used later to prove the mateability of the polynomials in Theorem \ref{thm:mateable}.

\subsection{Dynamics with parabolic points}

Let $f$ be a rational map with degree at least two having a multiple fixed point at $\zeta_0$, i.e., $f'(\zeta_0)=\zeta_0$ and $f'(\zeta_0)=1$. Up to a conformal coordinate transformation, we assume $\zeta_0=0$. Then there exist $a\in\C\setminus\{0\}$ and an integer $p\geqslant 1$ such that in a neighborhood of $0$,
\begin{equation}\label{equ:f}
f(z)=z+az^{p+1}+O(z^{p+2}).
\end{equation}
A complex number $\mv$ is called a \textit{repelling vector} (resp. \textit{attracting vector}) for $f$ at $0$ if $pa\textbf{v} ^p=1$ (resp. $pa\textbf{v} ^p=-1$).
If an orbit $f:z_0\mapsto z_1\mapsto \cdots$ converges to zero nontrivially (i.e., the orbit of $z_0$ is not iterated onto $0$), then according to \cite[\S 10]{Mil06}, $z_n\sim\textbf{v} /n^{1/p}$ as $n\to\infty$ for an attracting vector $\mv$.
In this case we say that the orbit of $z_0$ converges to $0$ in the direction $\mv$ and call $P_{\mv}$ the \textit{immediate parabolic basin} (attaching at the parabolic fixed point $0$) in the direction $\mv$, where $P_{\mv}$ is the Fatou component containing $z_n$ for all large enough $n$.

Note that $f$ has exactly $p$ repelling (resp. attracting) vectors at $0$. Let $\mv$ be an attracting or repelling vector.
For any $\zeta_0\in\C$, $\alpha\in(0,\frac{2\pi}{p})$ and $\delta>0$, we denote
\begin{equation}\label{equ:A-mv-1}
A_{\mv}^{\zeta_0}(\alpha,\delta):=\big\{z\in\C:\big|\arg\big(\tfrac{z-\zeta_0}{\mv}\big)\big|<\tfrac{\alpha}{2} \text{ and } 0<|z-\zeta_0|<\delta\big\}.
\end{equation}
For simplicity, we denote $A_{\mv}(\alpha,\delta):=A_{\mv}^{\zeta_0}(\alpha,\delta)$ if $\zeta_0=0$.
For $z_0\in\C$ and $\delta>0$, we denote $\D(z_0,\delta):=\{z\in\C:|z-z_0|<\delta\}$.
For a subset $Z$ in $\EC$, the forward orbit of $Z$ under $f$ is $O^+(Z):=\bigcup_{n\geqslant 0} f^{\circ n}(Z)$.
Let $\Crit(f)$ be the set of all critical points of $f$.

\begin{lem}\label{lem:att-p}
There exists $\delta>0$  such that
\begin{enumerate}
    \item If $\mv$ is an attracting vector of $f$ at $0$, then for any $\alpha\in(0,\frac{2\pi}{p})$, there exists a Jordan domain $Q_\mv=Q_\mv(\alpha)$ in $P_{\mv}$, such that $\overline{Q}_\mv\cap\partial P_{\mv}=\{0\}$, $\overline{f(Q_\mv)}\subset Q_\mv\cup \{0\}$ and $A_{\mv}(\alpha,\delta)\subset Q_\mv\cap\D(0,\delta)\subset A_\mv(\frac{1}{2}(\alpha+\frac{2\pi}{p}),\delta)$;
    \item If $\alpha\in(\tfrac{15\pi}{8p},\frac{2\pi}{p})$, $|z|<\delta$ and $z\not\in Q_{\mv}\cup\{0\}$ for any attracting vector $\mv$ of $f$ at $0$, then $z\in A_{\mv'}(\frac{\pi}{4p},\delta)$ for a repelling sector $\mv'$ of $f$ at $0$.
\end{enumerate}

Moreover, if $P_{\mv}$ is simply connected for the attracting vector $\mv$, then $Q_\mv$ can be chosen further to be a quasi-disk satisfying $P_{\mv}\cap O^+(\Crit(f))\subset Q_\mv$ and having a piecewise smooth boundary.
\end{lem}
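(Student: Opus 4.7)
The plan is to apply the classical Leau--Fatou flower theorem and carefully control the angular openings of the attracting petals. Introduce the local coordinate $\zeta=-1/(paz^p)$; under this change of variable, $f$ takes the form $\zeta\mapsto\zeta+1+O(1/|\zeta|)$ as $|\zeta|\to\infty$, and each attracting direction $\mv$ corresponds to $\zeta\to+\infty$ along the positive real axis on the appropriate branch. On a neighborhood of $0$ in $P_\mv$, the Fatou coordinate $\Phi_\mv$ is a univalent map to a right half-plane $\{\re\zeta>R\}$ conjugating $f$ to the translation $T(\zeta)=\zeta+1$; by the functional equation $\Phi_\mv\circ f=T\circ\Phi_\mv$, the inverse $\Phi_\mv^{-1}$ extends to larger $T$-invariant subsets of $\C$ by pulling back along the branch of $f^{-1}$ that stays in $P_\mv$.

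\textbf{Construction and verification of $Q_\mv$.} Fix an intermediate angle $\gamma\in(\alpha,\tfrac12(\alpha+\tfrac{2\pi}{p}))$ and set $\psi=p\gamma/2\in(0,\pi)$. Choose $\zeta_0\in\R$ of very negative real part and define the wedge
\[
\widetilde Q=\{\zeta\in\C:|\arg(\zeta-\zeta_0)|<\psi\}.
\]
The elementary inequality $\arg(re^{i\psi}+1)<\psi$ for all $r>0$ and $\psi\in(0,\pi)$ (verified separately in the cases $r\cos\psi+1\gtrless 0$) shows that $\widetilde Q$ is strictly $T$-invariant. Setting $Q_\mv:=\Phi_\mv^{-1}(\widetilde Q)\subset P_\mv$, where $\zeta_0$ is chosen so that $\widetilde Q$ avoids the $T$-orbit of critical values of the extended coordinate, $Q_\mv$ is a Jordan subdomain of $P_\mv$ whose only boundary contact with $\partial P_\mv$ is at $0$, and $\overline{f(Q_\mv)}\subset Q_\mv\cup\{0\}$ follows from the strict $T$-invariance of $\widetilde Q$. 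Under the inverse coordinate change $z=\mv\,\zeta^{-1/p}$, a boundary ray $\zeta=\zeta_0+te^{\pm i\psi}$ with $t\to+\infty$ satisfies $\arg(z/\mv)\to\mp\psi/p=\mp\gamma/2$, so the angular opening of $Q_\mv$ at $0$ is exactly $\gamma$. Since $\alpha<\gamma<\tfrac12(\alpha+\tfrac{2\pi}{p})$, the sandwich $A_\mv(\alpha,\delta)\subset Q_\mv\cap\D(0,\delta)\subset A_\mv(\tfrac12(\alpha+\tfrac{2\pi}{p}),\delta)$ holds for all sufficiently small $\delta>0$.

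\textbf{Part (b) and the quasi-disk refinement.} For part (b), when $\alpha>\tfrac{15\pi}{8p}$ the angular gap between two adjacent attracting sectors $A_{\mv_k}(\alpha,\delta)$ has width $\tfrac{2\pi}{p}-\alpha<\tfrac{\pi}{8p}$ and is centered at the repelling direction $\mv'$ halfway between $\mv_k$ and $\mv_{k+1}$; any angle in this gap lies within $\tfrac{\pi}{16p}<\tfrac{\pi}{8p}$ of $\mv'$, and hence belongs to $A_{\mv'}(\tfrac{\pi}{4p},\delta)$. Since $A_\mv(\alpha,\delta)\subset Q_\mv$ for every attracting $\mv$, any point $z\in\D(0,\delta)$ not lying in any $Q_\mv\cup\{0\}$ is forced into a repelling sector. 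For the quasi-disk refinement, when $P_\mv$ is simply connected every critical orbit point in $P_\mv$ converges to $0$, so $P_\mv\cap O^+(\Crit(f))$ accumulates only at $0$ and only finitely many of its points can lie outside any preassigned petal; one enlarges $\widetilde Q$ to include the $\Phi_\mv$-images of these finitely many points and smooths out the corresponding boundary arcs in $z$-coordinates, yielding a quasi-disk with piecewise smooth boundary that still satisfies all the properties above.

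The main obstacle is the extension of $\Phi_\mv^{-1}$ to the large domain $\widetilde Q$: critical points of $f$ inside $P_\mv$ introduce branch points of the extended Fatou coordinate, which must be avoided for $Q_\mv$ to remain a Jordan domain. We exploit the freedom in the position of $\zeta_0$ and, for the quasi-disk refinement, the flexibility in the shape of $\widetilde Q$, to circumvent these branch points.
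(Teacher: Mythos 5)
Your construction for parts (a) and (b) follows the same route as the paper: introduce a Fatou coordinate $\Phi$ on $P_\mv$ conjugating $f$ to $T(\zeta)=\zeta+1$, pull back a forward-invariant wedge from the $\zeta$-plane, and tune the opening angle so that the resulting $Q_\mv$ is sandwiched between $A_\mv(\alpha,\delta)$ and $A_\mv(\tfrac12(\alpha+\tfrac{2\pi}{p}),\delta)$ near $0$; the angular-gap computation for part (b) is also correct. One point needs fixing: you write ``choose $\zeta_0\in\R$ of very negative real part,'' but in your own convention (attracting direction corresponds to $\zeta\to+\infty$) this is backwards. The critical values of the extended Fatou coordinate lie along left-directed $T$-orbits $\{\Phi(c)-n:n\geqslant0\}$, so to have the component of $\Phi^{-1}(\widetilde Q)$ at $0$ be a conformal copy of $\widetilde Q$ (hence a Jordan domain meeting $\partial P_\mv$ only at $0$), you must push the apex $\zeta_0$ far to the \emph{right}, away from those critical values, as the paper does with $x_0>0$ large.

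The genuine gap is in the ``Moreover'' clause. Your sketch — ``enlarge $\widetilde Q$ to include the $\Phi_\mv$-images of these finitely many points and smooth out the corresponding boundary arcs'' — does not work as stated. If $c\in\Crit(f)\cap P_\mv$, then $\Phi(c)$ is simultaneously a critical value of $\Phi$ and the $\Phi$-image of a point of $O^+(\Crit(f))\cap P_\mv$ that the enlarged $\widetilde Q$ is required to contain, so the enlarged wedge necessarily swallows branch values. Once this happens, the component of $\Phi^{-1}(\widetilde Q)$ containing $0$ in its boundary is no longer a conformal copy of $\widetilde Q$; its closure will meet $\partial P_\mv$ at finitely many extra preimages of $0$, destroying the property $\overline{Q}_\mv\cap\partial P_\mv=\{0\}$, and ``smoothing boundary arcs'' offers no mechanism to restore it while keeping $\overline{f(Q_\mv)}\subset Q_\mv\cup\{0\}$. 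The paper instead iterates $f^{-1}$ on the petal $D_0=Q_\mv$ of part (a) inside $P_\mv$ to obtain an increasing chain of quasi-disks $D_n$, chooses $n_0$ so that $P_\mv\cap O^+(\Crit(f))\subset D_{n_0}$, and then excises from $D_{n_0}$ carefully designed pieces $W_a$ (built as preimages under $\Phi$ of slitted wedges $S_j$) around each extra boundary contact point $a\in(\overline{D}_{n_0}\cap\partial P_\mv)\setminus\{0\}$; the slit geometry is chosen so that $\Phi(f(W_a))\supset\overline{\Phi(W_{f(a)})}$, which is what makes the removal compatible with the forward dynamics. This excision step is the essential content of the ``Moreover'' and is missing from your proposal.
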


\begin{proof}
The idea of the proof can be found\footnote{We refer to Lemma 2.2 in \href{https://math.univ-angers.fr/~tanlei/papers/tymsri.pdf}{https://math.univ-angers.fr/$\sim$tanlei/papers/tymsri.pdf}, where an expanded version of \cite{TY96} was attached.} in \cite{TY96}. For the completeness, we sketch a proof here. Let $\mv$ be an attracting vector at $0$. By Leau-Fatou's theorem \cite[\S 10]{Mil06}, there exists a holomorphic function $\Phi:P_\mv\to\C$ such that $\Phi(f(z))=T_1\circ\Phi(z)$ for all $z\in P_\mv$, where $T_1(w)=w+1$. Moreover,
\begin{equation}\label{equ:Phi-expand}
\Phi(z)\sim -1/(apz^p) \text{\quad as } z\to 0 \text{ in } P_\mv.
\end{equation}
The critical points of $\Phi$ are points $z\in P_\mv$ such that $f^{\circ k}(z)$ is a critical point of $f$ for an integer $k\geqslant 0$.

For $x\in\R$ and $\eta\in(0,2\pi)$, we denote
\begin{equation}
S(x,\eta):=\big\{w\in\C: |\arg(w-x)|<\tfrac{\eta}{2}\big\}.
\end{equation}
For given $\alpha\in (0,\frac{2\pi}{p})$, let $\eta_0:=\frac{\pi}{2}+\frac{3p}{4}\alpha$. Then
\begin{equation}\label{equ:angle-1}
\tfrac{\eta_0}{p}\in\big(\alpha,\tfrac{1}{2}(\alpha+\tfrac{2\pi}{p})\big).
\end{equation} Taking $x_0=x_0(\alpha)>0$ large enough such that $S(x_0,\eta_0)$ contains no critical values of $\Phi$. Note that $\Phi:P_\mv\to\C$ is a surjection and $S(x_0,\eta_0)$ is simply connected.
Let $Q_\mv$ be the unique connected component of $\Phi^{-1}(S(x_0,\eta_0))$ whose boundary contains the parabolic fixed point $0$. By \eqref{equ:Phi-expand}, $Q_\mv$ is a Jordan domain, and in particular, by \eqref{equ:angle-1} it is a quasi-disk having a piecewise smooth boundary satisfying Part (a) if $\delta>0$ is small enough. Moreover, Part (b) holds for $\alpha\in(\tfrac{15\pi}{8p},\frac{2\pi}{p})$ and small enough $\delta>0$.

\medskip
Suppose $P_{\mv}$ is simply connected and denote $D_0:=Q_\mv$ for a fixed $\alpha\in(\tfrac{15\pi}{8p},\frac{2\pi}{p})$. In the following we construct a new $Q_\mv$ by taking preimages of $D_0$ in $P_\mv$.
Without loss of generality we assume that the above $x_0=x_0(\alpha)>0$ is chosen such that $\partial Q_\mv\setminus\{0\}$ is disjoint from $O^+(\Crit(f))$.
For each $n\geqslant 1$, denote by $D_n$ the unique connected component of $f^{-n}\left(D_0\right)$ containing $D_0$.
Since $f: D_n \rightarrow D_{n-1}$ is a branched covering and $P_{\mv}$ is simply connected, by Riemann-Hurwitz's formula, it follows that each $D_n$ is a quasi-disk having a piecewise smooth boundary.
Note that $\overline{D}_n \cap \partial P_\mv$ consists of finitely many points $a_n$ satisfying $f^{\circ n}(a_n)=0$ and $\overline{D}_n \cap \partial P_\mv\supset \overline{D}_{n-1} \cap \partial P_\mv$.
Let $n_0\geqslant 1$ be an integer such that $P_{\mv}\cap O^+(\Crit(f))\subset D_{n_0}$. Now we modify $D_{n_0}$ to obtain a new $Q_\mv$ as follows.

Let $x_0=x_0(\alpha)>0$ be chosen as above for $\alpha\in(\tfrac{15\pi}{8p},\frac{2\pi}{p})$. We take two sequence of numbers $(x_j)_{1\leqslant j\leqslant n_0}$ and $(y_j)_{1\leqslant j\leqslant n_0}$ such that
\begin{itemize}
\item $x_0<x_{n_0}<x_{n_0-1}<\cdots<x_2<x_1$ and $0<y_{n_0}<y_{n_0-1}<\cdots<y_2<y_1$;
\item $x_j+1<x_{j-1}$  for all $2\leqslant j\leqslant n_0$; and
\item There is no critical value of $\Phi$ in $S_{n_0}$, where for $1\leqslant j\leqslant n_0$,
\begin{equation}
S_j:=S(x_0-2j,\eta_0)\setminus\big\{w\in\C: \re w\leqslant x_j \text{ and } |\im w|\leqslant y_j\big\}.
\end{equation}
\end{itemize}
Take $a \in (\overline{D}_{n_0} \cap \partial P_\mv)\setminus\{0\}$. Assume $1\leqslant j\leqslant n_0$ is the minimal integer such that $f^{\circ j}(a)=0$. Let $W_a$ be the unique connected component of $\Phi^{-1}\left(S_j\right)$ with $a\in \overline{W}_a$. Note that for $2\leqslant j\leqslant n_0$,
\begin{equation}
\Phi \big(f(W_a)\big)=T_1\big(\Phi(W_a)\big)=T_1(S_j) \supset \overline{S}_{j-1}=\overline{\Phi\big(W_{f(a)}\big)}.
\end{equation}
Then for sufficiently small $\delta>0$, the new $Q_\mv:=D_{n_0}\setminus\bigcup_{a \in (\overline{D}_{n_0} \cap \partial P_\mv)\setminus\{0\}}\overline{W}_a$ satisfies the required properties.
\end{proof}

The Jordan domain $Q_\mv$ in Lemma \ref{lem:att-p} satisfying (a) is called an \textit{attracting petal}.
For a repelling vector $\mv$, based on Lemma \ref{lem:att-p}(b) we call $A_{\mv}(\delta):=A_{\mv}(\frac{\pi}{4p},\delta)$ a \textit{thin repelling sector}.

\medskip

For a rational map $f: \EC\to \EC$ with $\deg(f)\geqslant 2$, the \textit{postcritical set} of $f$ is
$$
\MP(f):=\overline{\bigcup_{n\geqslant 1 } f^{\circ n}\big(\Crit(f)\big)}.
$$
Let $\dist_{\EC}(\cdot,\cdot)$ and $\diam_{\EC}(\cdot)$ denote the distance and diameter with respect to the spherical metric respectively.
A sequence $\{V_n\}_{n\geqslant 0}$ is called \textit{a pull back sequence} of a Jordan domain $V_0$ under $f$ if $V_{n+1}$ is a connected component of $f^{-1}(V_n)$ for all $n\geqslant 0$. The following classical shrinking lemma was proved in \cite[p.\,86]{LM97} (see also \cite{Man93}, \cite{TY96}).

\begin{lem}[Classical shrinking lemma]\label{lem:semi-hyperbolic}
Let $U_0, V_0$ be two Jordan domains in $\EC$ and $D\geqslant 1$. Suppose $U_0$ is not contained in any rotation domain of $f$ and that $V_0$ is compactly contained in $U_0$.
Then for any $\varepsilon>0$, there exists an $N\geqslant 1$ such that for any pull back sequence $\{U_n\}_{n\geqslant 0}$ satisfying $\deg(f^{\circ n}:U_n\to U_0)\leqslant D$, $\diam_{\EC}(V_n)<\varepsilon$ for all $n\geqslant N$, where $V_n$ is any component of $f^{-n}(V_0)$ contained in $U_n$.
\end{lem}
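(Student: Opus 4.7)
The plan is to argue by contradiction via a normal families argument combined with Koebe-type distortion estimates forced by the uniform degree bound.

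First, I would set up the geometry. Choose an intermediate Jordan domain $W_0$ with $\overline{V}_0\Subset W_0\Subset U_0$, and let $W_n$ denote the component of $f^{-n}(W_0)$ inside $U_n$. The uniform degree bound $\deg(f^{\circ n}:U_n\to U_0)\leqslant D$, together with Riemann--Hurwitz, guarantees that each $U_n$ is simply connected and that $f^{\circ n}:U_n\to U_0$ is a proper branched covering of degree at most $D$. A Koebe-type distortion estimate for bounded-degree branched coverings (applied after uniformizing $U_n$ and $U_0$ to the unit disk) then yields constants $c,C>0$ depending only on $V_0$, $W_0$, $U_0$ and $D$ with
\begin{equation*}
c\,\diam_{\EC}(U_n)\leqslant \diam_{\EC}(V_n)\leqslant C\,\diam_{\EC}(U_n).
\end{equation*}
In other words, the spherical sizes of $V_n$ and $U_n$ are comparable, uniformly in $n$.

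Second, suppose for contradiction that there exist $\varepsilon_0>0$ and a subsequence $(n_k)$ with $\diam_{\EC}(V_{n_k})\geqslant \varepsilon_0$. By the comparability above, $\diam_{\EC}(U_{n_k})\geqslant \varepsilon_0/C$. Passing to a further subsequence, one has Hausdorff convergence $\overline{U}_{n_k}\to K_\infty$ where $K_\infty$ is a continuum of positive spherical diameter. Uniformizing each $U_{n_k}$ by a Riemann map $\psi_k:\D\to U_{n_k}$ with suitable normalization, the maps $g_k:=f^{\circ n_k}\circ\psi_k:\D\to U_0$ form a normal family of proper branched coverings of degree at most $D$. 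Extracting a limit $g_\infty$ (non-constant thanks to the degree bound), one obtains a non-trivial holomorphic relation between $K_\infty$ and $U_0$ produced by $f$-iterates.

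Third, I would translate this limit into dynamical information. The nested structure of the pull-back sequence, combined with convergence of the $U_{n_k}$, forces the orbits of points in $K_\infty$ under $f$ to remain in a periodic Fatou component; otherwise, since $\EC\setminus \MP(f)$ carries a hyperbolic metric with respect to which $f$ is weakly expanding away from rotation domains, pullbacks of bounded degree would be forced to shrink in diameter. Sullivan's no-wandering-domain theorem rules out wandering components, and the classification of periodic Fatou components together with the fact that bounded-degree pullbacks inside attracting or parabolic basins must themselves shrink (orbits are eventually trapped by a compact sub-piece of the basin) forces this Fatou component to be a rotation domain. This contradicts the hypothesis that $U_0$ is not contained in any rotation domain of $f$.

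The main obstacle is the final dynamical step: extracting a genuine periodicity statement from the limiting correspondence and excluding the attracting and parabolic cases. One must normalize the Riemann maps $\psi_k$ so that $g_k$ converges non-trivially, and then trace how the nested pullbacks constrain the orbit of $K_\infty$ inside the presumed invariant Fatou component. The bounded-degree hypothesis is indispensable twice: it controls the Koebe distortion so that $V_n$ and $U_n$ remain comparable uniformly, and it guarantees that the limit $g_\infty$ is a non-degenerate branched covering onto $U_0$ rather than collapsing to a constant.
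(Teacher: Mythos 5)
This lemma is not proved in the paper; it is quoted as a ``classical shrinking lemma'' with attribution to Lyubich--Minsky (and Ma\~n\'e, Tan--Yin), so there is no internal proof to compare against. Your proposal reconstructs the broad strategy used in those references (Hausdorff limits, normal families, Sullivan's theorem, classification of periodic Fatou components), so the overall shape is reasonable. That said, there are several concrete gaps.

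First, the parenthetical ``non-constant thanks to the degree bound'' is not justified. A sequence of proper holomorphic maps $g_k:\D\to U_0$ of degree $\leqslant D$ can perfectly well converge locally uniformly to a constant; the degree bound alone does not prevent degeneration. What actually rules this out is the compact containment $V_0\Subset U_0$ together with a modulus estimate: $\Mod\big(U_{n_k}\setminus\overline{V}_{n_k}\big)\geqslant \Mod\big(U_0\setminus\overline{V}_0\big)/D$, which, after normalizing $\psi_k$ so that $\psi_k(0)\in V_{n_k}$, confines $\psi_k^{-1}(V_{n_k})$ to a fixed compact subdisk of $\D$. If $g_\infty$ were constant, $g_k^{-1}(V_0)$ would exhaust $\D$, contradicting that confinement. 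You need to run that argument, not just invoke the degree bound. Relatedly, the two-sided comparability $c\,\diam_{\EC}(U_n)\leqslant\diam_{\EC}(V_n)\leqslant C\,\diam_{\EC}(U_n)$ is both unused (only the trivial inclusion $V_n\subset U_n$ is needed to pass from $\diam V_{n_k}\geqslant\varepsilon_0$ to $\diam U_{n_k}\geqslant\varepsilon_0$) and dubious in its lower direction; you should drop it.

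Second, and more seriously, the phrase ``the nested structure of the pull-back sequence'' is a misconception. A pull-back sequence $\{U_n\}$ satisfies $f(U_{n+1})=U_n$, so the $U_n$ are spread across the sphere by the backward dynamics; they are not nested, and no containment $U_{n+1}\subset U_n$ is available. Whatever periodicity you extract must come from a genuine dynamical argument on the limit object, not from nesting.

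Third, the final step is circular as written. You argue that the orbit of $K_\infty$ must land in a rotation domain, first because otherwise ``pullbacks of bounded degree would be forced to shrink'' (which is precisely the statement under proof) and then because ``bounded-degree pullbacks inside attracting or parabolic basins must themselves shrink'' (again the statement under proof, restricted to the basin case). These need independent arguments. The correct route is to show (via the modulus/normalization argument) that $K_\infty$ has nonempty interior, that $\{f^{\circ n_k}\}$ is normal there, hence $\operatorname{int}(K_\infty)$ lies in a Fatou component $\Omega$ which, by Sullivan, is preperiodic. Then one must directly rule out that the terminal cycle is attracting or parabolic: in those cases the limits of $f^{\circ n_k}$ on $\Omega$ are constants equal to points of the cycle (by the local dynamics in linearizing/Fatou coordinates), while surjectivity of $f^{\circ n_k}:U_{n_k}\to U_0$ of bounded degree and $V_0\Subset U_0$ force the preimages to escape every neighborhood of the cycle after finitely many degree jumps, which is incompatible with $\diam_{\EC}(V_{n_k})\geqslant\varepsilon_0$. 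Only then does the hypothesis that $U_0$ is not contained in a rotation domain deliver the contradiction. As it stands, your third step asserts rather than proves exactly these points.
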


In particular, Lemma \ref{lem:semi-hyperbolic} holds when $\overline{V}_0\cap \MP(f)=\emptyset$ and $V_0$ is not contained in any rotation domain.

\begin{lem}\label{lem:parabolic-rat}
Let $f$ be a rational map having a multiple fixed point at $0$ with $\deg(f)\geqslant 2$.
Suppose $J(f)\cap\MP(f)\cap\D(0,\delta')=\{0\}$ for some $\delta'>0$. Then there exists $\delta\in(0,\delta']$ such that for any $\varepsilon>0$, there exists an $N\geqslant 1$ such that for any pull back sequence $\{V_n\}_{n\geqslant 0}$ of any thin repelling sector $A_\mv(\delta)$ at $0$, $\diam_{\EC}(V_n)<\varepsilon$ for all $n\geqslant N$.
\end{lem}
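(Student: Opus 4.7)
The plan is to combine two contraction mechanisms: (a) local parabolic contraction via the inverse branch $g$ of $f$ fixing $0$, building on Lemma \ref{lem:att-p} and the Leau--Fatou analysis of thin repelling sectors, and (b) the classical shrinking lemma (Lemma \ref{lem:semi-hyperbolic}) applied to a truncated ``bulk'' region bounded away from $0$.

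First I would choose $\delta\in(0,\delta']$ small enough that (i) $f$ is univalent on $\D(0,\delta)$ (possible since $f'(0)=1$), (ii) Lemma \ref{lem:att-p} applies and $\D(0,\delta)\setminus\{0\}$ is covered by the attracting petals $Q_{\mu_j}$ and the thin repelling sectors $A_{\mv_i}(\delta)$, and (iii) every $A_{\mv'}(\delta)$ is disjoint from $\MP(f)$. Verifying (iii) is the subtle step: the hypothesis $J(f)\cap\MP(f)\cap\D(0,\delta')=\{0\}$ confines postcritical Fatou orbits near $0$ to the immediate parabolic basins of $0$, and the asymptotic $z_n\sim\mu/n^{1/p}$ makes them approach $0$ tangentially along the attracting rays $\mu$; for $\delta$ small enough these orbits stay strictly inside the attracting petals and miss the thin repelling sectors. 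Consequently $V_0:=A_\mv(\delta)$ satisfies $V_0\cap\MP(f)=\emptyset$, and by forward invariance of $\MP(f)$ and induction every pullback component $V_n$ satisfies $V_n\cap\MP(f)=\emptyset$; since $f(\Crit(f))\subset\MP(f)$ we also get $V_n\cap\Crit(f)=\emptyset$, so $f^{\circ n}$ restricts to a univalent map $V_n\to V_0$.

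Fix $\varepsilon>0$. By (a), $g^{\circ k}(A_\mv(\delta))\subset A_\mv(\delta_k)$ with $\delta_k\to 0$; choose $k_0$ with $2\delta_{k_0}<\varepsilon/2$. For (b), fix $\eta\in(0,\delta_{k_0})$ and set $V_0^\star:=V_0\setminus\overline{\D(0,\eta)}$; since $\overline{V_0^\star}\subset\EC\setminus\MP(f)$ and the latter is open, enlarge $V_0^\star$ to a Jordan domain $U_0$ with $\overline{V_0^\star}\subset U_0$ and $\overline{U_0}\cap\MP(f)=\emptyset$. Every pullback of $U_0$ is then univalent (degree $1$), so Lemma \ref{lem:semi-hyperbolic} furnishes $N_1$ such that every pullback component of $V_0^\star$ at depth $\geq N_1$ has spherical diameter less than $\varepsilon/2$. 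Set $N:=N_1+k_0$ and consider any pullback $V_n$ with $n\geq N$. If $V_j\subset\D(0,\delta)$ for every $j\in[n-k_0,n]$, the last $k_0$ steps are $g$-iterates (by univalence of $f$ on $\D(0,\delta)$), so $V_n\subset A_\mv(\delta_{k_0})$ and $\diam_{\EC}(V_n)<\varepsilon/2$. Otherwise some $V_{j^*}$ with $j^*\in[n-k_0,n]$ exits $\D(0,\delta)$, whereupon the univalent decomposition $V_{j^*}=\psi(V_0^\star)\cup\psi(V_0\cap\overline{\D(0,\eta)})$ (with $\psi$ the univalent inverse of $f^{\circ j^*}$ on $V_{j^*}$) gives $\diam_{\EC}(\psi(V_0^\star))<\varepsilon/2$ by Lemma \ref{lem:semi-hyperbolic}, while the ``tip'' $\psi(V_0\cap\overline{\D(0,\eta)})$ is controlled by Carath\'eodory continuity of $\psi$ at $0\in\partial V_0$ and Koebe distortion anchored on $V_0^\star$, with $\eta$ chosen small uniformly in $j^*$ from the diameter bound on the bulk. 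Combining the two pieces and applying the further $\leq k_0$ iterates of $g$ (which only append a set inside $A_\mv(\delta_{k_0})$), the triangle inequality yields $\diam_{\EC}(V_n)<\varepsilon$.

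The principal obstacle is property (iii): arguing that the thin repelling sectors $A_{\mv'}(\delta)$ can be chosen disjoint from $\MP(f)$ using only the hypothesis on $J(f)\cap\MP(f)\cap\D(0,\delta')$. This relies on the tangential accumulation of postcritical Fatou orbits at $0$ along attracting directions combined with the angular separation built into Lemma \ref{lem:att-p}. A secondary technical point is the uniform control of the ``tip'' pullback $\psi(V_0\cap\overline{\D(0,\eta)})$ in Case B, which must handle the possibility that $\psi$ ramifies at $0$ when $\psi(0)$ is a critical point of $f^{\circ j^*}$; one resolves this either by a careful branched Koebe estimate or by the orbifold-metric refinement alluded to in the section introduction.
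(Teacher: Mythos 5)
Your high-level strategy---contract along the local inverse branch $g$ near $0$ and apply the classical shrinking lemma to the portion of $V_0$ bounded away from $\MP(f)$---is the same pairing of mechanisms that the paper uses, and your reduction to $V_0\cap\MP(f)=\emptyset$ and to univalent pullbacks is correct. However, the case analysis at the end does not close.

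The first gap is in Case~A. From ``$V_j\subset\D(0,\delta)$ for $j\in[n-k_0,n]$'' and univalence of $f$ on $\D(0,\delta)$ you may conclude $V_n=g^{\circ k_0}(V_{n-k_0})$, but not that $V_n\subset A_{\mv}(\delta_{k_0})$. For the latter you would need $V_{n-k_0}\subset A_{\mv}(\delta)$, i.e. that $V_{n-k_0}$ lies in the $g$-attracting petal. An arbitrary pullback component of $A_{\mv}(\delta)$ that merely sits inside $\D(0,\delta)$ need not sit inside the repelling sector: the contraction $g^{\circ k}(A_{\mv}(\delta))\subset A_{\mv}(\delta_k)$ only applies to the pure $g$-chain $V_j=g^{\circ j}(V_0)$, which is a much stronger hypothesis than the one you impose. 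The second gap is in Case~B: you bound $\diam(V_{j^*})$, but you need to bound $\diam(V_n)$ for $n$ possibly larger than $j^*$, and the claim that the remaining pullbacks ``only append a set inside $A_{\mv}(\delta_{k_0})$'' is not justified---those steps need not be $g$-iterates when $V_{j^*}\not\subset\D(0,\delta)$, and $g$-iterates in any case replace a set by its image, they do not merely append to it. Finally, your ``tip'' estimate via Carath\'eodory continuity and Koebe needs a uniform control that you do not establish, and as you note yourself, it must handle branching of $\psi$ at the boundary point over $0$.

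The paper avoids both difficulties by a different bookkeeping of the pullback tree. It first enlarges $A_{\mv}(\delta)$ to a $g$-attracting petal $Q$ satisfying $\overline{g(Q)}\subset Q\cup\{0\}$ and $\overline{Q}\cap\MP(f)=\{0\}$, so that the pure $g$-chain $g^{\circ n}(Q)$ is nested and shrinks by Leau--Fatou. It then labels the $d-1$ ``side'' components $V_1^{(1)},\dots,V_1^{(d-1)}$ of $f^{-1}(Q)$ other than $g(Q)$, and observes that for every $k$, $f^{-1}(g^{\circ(k-1)}(Q))$ has $d$ components, namely $g^{\circ k}(Q)$ and components $V_k^{(i)}\subset V_1^{(i)}$. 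Consequently, any pullback sequence either never leaves the $g$-chain (and shrinks by Leau--Fatou) or leaves it at some step $k$, after which every later term is nested inside a pullback of a fixed $V_1^{(i)}$---and these shrink by Lemma~\ref{lem:semi-hyperbolic} once the pullbacks are bounded away from $\MP(f)$. This ``side branch'' containment cleanly replaces your truncation and tip/bulk decomposition and makes the Koebe argument unnecessary. To repair your proof, you would essentially need to recover that observation: trigger the classical shrinking lemma not on a truncated $V_0^\star$, but on the off-chain preimages $V_1^{(i)}$, and track the first step at which the pullback sequence leaves the $g$-chain.
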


\begin{proof}
Since $f'(0)=1$, there exists an open neighborhood $U$ of $0$ such that the univalent branch $g$ of $f^{-1}$ with $g(0)=0$ is well defined in $U$. According to local dynamics of $g$ near $0$ (see \cite[\S 10]{Mil06}), the attracting vectors, repelling vectors and immediate parabolic basins etc for $g$ are also defined analogously as $f$. In particular, every attracting (resp. repelling) vector of $g$ at $0$ is exactly the repelling (resp. attracting) vector of $f$ and vice versa. Hence if $\mv$ is a repelling vector of $f$, then it is an attracting vector of $g$. By the definition of thin repelling sector, we conclude that $V_0:=A_\mv(\delta)$ is contained in the immediate parabolic basin attaching at $0$ in the direction $\mv$ for $g$ if $\delta>0$ is small enough. In particular, we have $\diam_{\EC}(g^{\circ n}(V_0))\to 0$ as $n\to\infty$.

Suppose $J(f)\cap\MP(f)\cap\D(0,\delta')=\{0\}$ for some $\delta'>0$. Decreasing the above $\delta>0$ if necessary, we assume that $\delta\in(0,\delta']$, $\overline{A_\mv(\delta)}\cap \MP(f)=\{0\}$ and $A_\mv(\delta)\subset Q$, where $Q$ is a small attracting petal for $g$ satisfying $\overline{g(Q)}\subset Q\cup\{0\}$ and $\overline{Q}\cap \MP(f)=\{0\}$. Therefore, it suffices to prove this lemma for $V_0:=Q$.
Let $\varepsilon>0$ be given. Then there exists an integer $N_1\geqslant 1$ such that
\begin{equation}\label{equ:eps-1}
\diam_{\EC}(g^{\circ n}(V_0))<\varepsilon \text{\quad for all\quad} n\geqslant N_1.
\end{equation}

Since $V_0$ is a Jordan domain and $V_0\cap \MP(f)=\emptyset$, it follows that every component $V_n$ of $f^{-n}(V_0)$ is a Jordan domain and the restriction of $f^{\circ n}$ on $V_n$ is conformal.  Note that $\overline{V}_0\cap \MP(f)=\{0\}$ and $f^{-1}(V_0)$ has exactly $d=\deg(f)\geqslant 2$ connected components $g(V_0)$, $V_1^{(1)}$, $\cdots$, $V_1^{(d-1)}$.
Then there exists $n_0\geqslant 1$ such that for any $1\leqslant i\leqslant d-1$, the closure of any connected component of $f^{-n_0}(V_1^{(i)})$ is disjoint with $\MP(f)$.
By Lemma \ref{lem:semi-hyperbolic}, there exists $N_2\geqslant 1$ such that
\begin{equation}\label{equ:eps-2}
\diam_{\EC}(V_n)<\varepsilon \text{\quad for all \quad} n\geqslant N_2 \text{ and } 1\leqslant i\leqslant d-1,
\end{equation}
where $V_n$ is any connected component of $f^{-(n-1)}(V_1^{(i)})$.
Note that for each $k\geqslant 2$, $f^{-1}\big(g^{\circ (k-1)}(V_0)\big)$ has exactly $d$ connected components $g^{\circ k}(V_0)$, $V_{k}^{(1)}$, $\cdots$, $V_{k}^{(d-1)}$, where $V_{k}^{(i)}\subset V_1^{(i)}$ for all $1\leqslant i\leqslant d-1$ since $g^{\circ (k-1)}(V_0)\subset V_0$. By \eqref{equ:eps-1} and \eqref{equ:eps-2}, we have
\begin{equation}\label{equ:eps-3}
\diam_{\EC}(V_n)<\varepsilon \text{\quad for all \quad} n\geqslant N_1+N_2,
\end{equation}
where $V_n$ is any connected component of $f^{-n}(V_0)$.
The proof is finished by setting $N:=N_1+N_2$.
\end{proof}

\subsection{Orbifold metric and contraction properties}

For the local connectivity of compact subsets on the Riemann sphere, the following criterion is very useful (see \cite[Theorem 4.4, p.113]{Why42}).

\begin{lem}[LC criterion]\label{lem:LC-criterion}
A compact subset $X$ in $\EC$ is locally connected if and only if the following two conditions hold:
\begin{enumerate}
\item  The boundary of every component of $\EC\setminus X$ is locally connected; and
\item  For any given $\varepsilon>0$, there are only finitely many components of $\EC\setminus X$ whose spherical diameters are greater than $\varepsilon$.
\end{enumerate}
\end{lem}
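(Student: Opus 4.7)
The plan is to split the proof of the biconditional into two independent implications. For the forward direction I would use Whyburn's \emph{property S}: a compactum $Y$ in a metric space is locally connected if and only if, for every $\varepsilon>0$, $Y$ admits a finite cover by connected sets of diameter less than $\varepsilon$. For the converse direction I would verify local connectedness of $X$ directly at every point, by constructing explicit small connected neighborhoods inside $X$.

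First I would prove that local connectivity of $X$ implies (a) and (b). For (b), given $\varepsilon>0$, apply property S to decompose $X=X_1\cup\cdots\cup X_k$ into finitely many connected pieces of diameter less than $\varepsilon/3$. A component $U$ of $\EC\setminus X$ with $\diam_{\EC}(U)\geqslant\varepsilon$ cannot have $\overline{U}$ contained in any $\varepsilon/3$-neighborhood of a single $X_j$, so $\overline{U}$ must stretch between at least two such neighborhoods; a packing argument on the sphere then yields a uniform bound, depending only on $k$, on the number of such $U$. For (a), I would invoke Torhorst's theorem from plane topology: the boundary of every complementary domain of a locally connected continuum in $S^2$ is itself a locally connected continuum. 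When $X$ is connected this is immediate; the disconnected case reduces to it by restricting attention to the component of $X$ meeting $\partial U$.

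For the converse, fix $x\in X$ and $\varepsilon>0$; the aim is to produce a connected set $V\subset X$ that is a neighborhood of $x$ in $X$ and has diameter less than $\varepsilon$. By (b), only finitely many complementary components $U_1,\ldots,U_m$ meeting the disk $D(x,\varepsilon/4)$ have diameter at least $\varepsilon/4$. For each such $U_i$ whose boundary passes near $x$, condition (a) produces a connected subset $\gamma_i\subset\partial U_i\cap\overline{D(x,\varepsilon/2)}$ of small diameter containing every point of $\partial U_i$ sufficiently close to $x$. Then $V$ is built as the union of $X\cap\overline{D(x,\varepsilon/4)}$ with the arcs $\gamma_i$, which patch up the cuts in $X$ induced by the $U_i$; the remaining complementary components meeting $D(x,\varepsilon/4)$ all have diameter less than $\varepsilon/4$, so they are absorbed without enlarging the diameter of $V$ beyond $\varepsilon$.

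The main obstacle is precisely this patching step in the converse direction: one must verify that the assembled set $V$ is genuinely connected and is a topological neighborhood of $x$ in $X$, which requires care when several boundaries $\partial U_i$ accumulate at $x$ from different sides, or when infinitely many tiny complementary components cluster there. A cleaner alternative would bypass the pointwise construction and instead establish property S for $X$ directly from (a) and (b): use (a) to cover each of the finitely many large $\partial U_i$ by finitely many small connected arcs, combine with a finite cover of the rest of $X$ by small disks, and invoke (b) to ensure local finiteness of the resulting cover. Either route ultimately rests on combining the finiteness furnished by (b) with the local boundary structure furnished by (a).
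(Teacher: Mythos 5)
Note first that the paper does not prove this lemma; it is quoted directly from Whyburn \cite[Theorem 4.4, p.113]{Why42}, so there is no in-text argument to compare against. Your reconstruction follows the standard route (property~S and Torhorst's theorem for the forward implication, a patching argument for the converse), and the forward direction is broadly sound, but two points need repair. The ``packing argument'' for (b) is not a genuine packing: a thin complementary component of large diameter need not contain a round ball of fixed radius, so one cannot simply count disjoint balls; a compactness/subsequence argument (pick far-apart points $a_n,b_n$ in distinct big components, pass to limits $a,b\in X$, and use uniform local connectivity of $X$) is what is actually needed. And the reduction of (a) to the connected case ``by restricting attention to the component of $X$ meeting $\partial U$'' is wrong as stated: $\partial U$ may meet several components of $X$, and the portion of $\partial U$ lying in one component $X_j$ is only a \emph{subset} of the boundary of the relevant complementary domain of $X_j$, so Torhorst's theorem does not apply to it directly.

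The substantive gap is in the converse, which is the direction the paper actually uses. Your patching step assumes that when a short geodesic from $x$ to a nearby $y\in X$ dips into a complementary component $U$, entering at $p\in\partial U$ and leaving at $q\in\partial U$, the excursion can be replaced by a small connected subset of $\partial U$. This requires $p$ and $q$ to lie in the same component of $\partial U$, which does not follow from (a) alone: $\partial U$ can be disconnected (e.g.\ the annular region between two disjoint circles). The missing ingredient is unicoherence of $S^2$: when $X$ is \emph{connected}, $\overline{U}$ and $S^2\setminus U$ are both continua whose union is $S^2$, hence their intersection $\partial U$ is a continuum, and only then does local connectivity of $\partial U$ supply a small arc from $p$ to $q$. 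You flag the patching as the obstacle but do not identify or use this fact, and one must also organize the (possibly infinitely many) small excursions so their union stays connected and of controlled diameter. Finally, the suggested ``cleaner alternative'' does not establish property~S for $X$: a small round disk $D$ can meet $X$ in infinitely many components, so covering the large $\partial U_i$'s by small connected arcs together with covering ``the rest of $X$'' by small disks does not yield a finite cover of $X$ by \emph{connected} subsets of $X$, and so the key step remains open.
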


To apply the above criterion to the Julia sets of rational maps, one needs to study the local connectivity of the boundaries of Fatou components and control their size.
In \cite{Pet96}, Petersen proved that the Julia set of $f_\theta(z)=e^{2\pi\ii\theta}z+z^2$ is locally connected when $\theta$ is of bounded type. This result has been extended to a number of rational maps with bounded type Siegel disks in \cite{WYZZ22} based on the above local connectivity criterion, and the following lemma plays a crucial role (see \cite[Main Lemma]{WYZZ22}):

\begin{lem}\label{main-lem-WYZZ}
Let $f$ be a rational map having a fixed bounded type Siegel disk $\Delta$ with $\deg(f)\geqslant 2$.  Suppose $\dist_{\EC}(\MP(f)\setminus\partial{\Delta},\partial{\Delta})>0$. Then for any $\varepsilon>0$ and any Jordan domain $V_0\subset \EC\setminus\overline{\Delta}$ with $\emptyset\neq\overline{V}_0 \cap \MP(f) \subset \partial\Delta$, there exists an $N=N(\varepsilon,V_0,f) \geqslant 1$, such that for any pull back sequence $\{V_n\}_{n\geqslant 0}$ of $V_0$, $\diam_{\EC}(V_n)<\varepsilon$ for all $n\geqslant N$.
\end{lem}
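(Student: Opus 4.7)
The plan is to establish the required diameter contraction by combining two complementary mechanisms: hyperbolic/shrinking contraction away from $\partial\Delta$, and a combinatorial puzzle contraction near $\partial\Delta$ coming from the bounded type hypothesis on $\theta$.

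First I would reduce to the case that $V_0$ is a small Jordan disk with $\overline{V}_0\cap\MP(f)\subset\partial\Delta$, and then choose a slightly larger Jordan neighborhood $U_0\supset\overline{V}_0$ still satisfying $\overline{U}_0\cap\MP(f)\subset\partial\Delta$; this enlargement is possible precisely because $\dist_{\EC}(\MP(f)\setminus\partial\Delta,\partial\Delta)>0$ pushes the rest of the postcritical set uniformly away from $\partial\Delta$. Any pullback sequence $\{V_n\}_{n\geqslant 0}$ of $V_0$ lifts to a corresponding pullback sequence $\{U_n\}_{n\geqslant 0}$ of $U_0$ with $V_n\subset U_n$, and the degree of $f^{\circ n}:U_n\to U_0$ is bounded independently of $n$ by the total multiplicity of $\Crit(f)$. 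Consequently, the classical shrinking Lemma~\ref{lem:semi-hyperbolic} will force $\diam_{\EC}(V_n)\to 0$ along any branches that stay at definite spherical distance from $\partial\Delta$; the genuine difficulty is with branches whose closure repeatedly meets $\partial\Delta$, where the postcritical set is dense and no hyperbolic contraction is available.

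For the hard branches I would invoke the complex a priori bounds of Petersen and Yampolsky \cite{Pet96,Yam99}: since $\theta$ is of bounded type, one has a canonical sequence of nested Jordan neighborhoods $\{P^{(n)}\}_{n\geqslant 0}$ of $\overline{\Delta}$ whose complements in $P^{(n)}\setminus\overline{\Delta}$ are partitioned into finitely many puzzle pieces with uniformly bounded geometry, and whose spherical diameters tend to zero as $n\to\infty$. This structure comes from the Blaschke-like renormalization model governing the first-return dynamics near $\partial\Delta$; because the hypothesis localizes the problem to a neighborhood of $\overline{\Delta}$ where no stray postcritical orbit interferes, the construction extends from the classical quadratic setting to the rational map $f$. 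The crucial combinatorial observation is that each application of $f^{-1}$ to a pullback domain whose closure meets $\partial\Delta$ strictly increases the puzzle-address length by at least one, so a pullback sequence that keeps returning to a neighborhood of $\partial\Delta$ lies in puzzle pieces of depth tending to infinity, forcing $\diam_{\EC}(V_n)\to 0$ uniformly in the branch.

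Given $\varepsilon>0$, the synthesis then proceeds as follows: pick $n_0$ so that every puzzle piece of depth $\geqslant n_0$ has spherical diameter $<\varepsilon$, and let $W$ be the union of these pieces together with $\overline{\Delta}$. Outside $W$ the set $\overline{U}_0\setminus W$ is compactly disjoint from $\MP(f)$, so Lemma~\ref{lem:semi-hyperbolic} gives an integer $N_1$ controlling pullbacks that avoid $W$; inside $W$ the puzzle contraction gives an integer $N_2$ controlling pullbacks that enter $W$. Taking $N:=\max(N_1,N_2)$ yields the required uniform bound. The main obstacle is the puzzle construction itself with its uniform geometry, which rests entirely on Yampolsky's complex a priori bounds and therefore on the bounded type hypothesis; once those bounds are granted, the rest of the argument is bookkeeping of pullback degrees and puzzle addresses.
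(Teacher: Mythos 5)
The paper does not actually prove this statement: Lemma~\ref{main-lem-WYZZ} is stated verbatim and cited as the ``Main Lemma'' of \cite{WYZZ22}, and no proof is given in the present text. So there is no in-paper argument to compare against; I can only evaluate your proposal on its own terms.

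Your overall decomposition (hyperbolic shrinking away from $\partial\Delta$, a boundary-specific mechanism near $\partial\Delta$) is the right shape, but there are two concrete problems. First, your reduction step asserts that ``the degree of $f^{\circ n}:U_n\to U_0$ is bounded independently of $n$ by the total multiplicity of $\Crit(f)$.'' That is false precisely in the case that matters. By Zhang's theorem \cite{Zha11}, $\partial\Delta$ is a quasicircle containing a critical point, and a pullback sequence $\{U_n\}$ whose closures repeatedly meet $\partial\Delta$ will generically pick up that critical point over and over, so $\deg(f^{\circ n}:U_n\to U_0)\to\infty$. The classical shrinking Lemma~\ref{lem:semi-hyperbolic} then gives nothing on these branches, and you cannot ``force $\diam_{\EC}(V_n)\to 0$ along any branches that stay at definite distance from $\partial\Delta$'' and leave only the boundary branches to the puzzle argument: the degree bound you need for the first mechanism is already unavailable for the branches you are trying to dispatch with the second. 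This unbounded degree along boundary-touching branches is exactly why the statement is nontrivial.

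Second, the boundary mechanism is asserted rather than supplied. You invoke ``a canonical sequence of nested Jordan neighborhoods $\{P^{(n)}\}$ of $\overline{\Delta}$ ... partitioned into finitely many puzzle pieces with uniformly bounded geometry'' and claim ``the construction extends from the classical quadratic setting to the rational map $f$.'' That extension is not a bookkeeping step; it is the substance of the result. Petersen's puzzle for $f_\theta$ and Yampolsky's complex a priori bounds live on specific Blaschke/Arnold models attached to a critical circle map; transporting them to an arbitrary rational map $f$ with a bounded type Siegel disk requires building such a model (by a quasiconformal surgery isolating the return dynamics to $\partial\Delta$) and verifying that the geometry of the resulting annuli is uniformly controlled. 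If you grant that structure as a black box, you have essentially granted the conclusion of WYZZ22's Main Lemma. Moreover, your combinatorial claim that every application of $f^{-1}$ to a domain meeting $\partial\Delta$ ``strictly increases the puzzle-address length by at least one'' is not obvious and would need a precise matching between pullback steps and puzzle depths, which you do not provide. As written, the proposal replaces the hard part of the lemma with a citation to machinery that has only been established for special families, and a degree estimate that fails; this is a genuine gap, not a difference of route.
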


The main purpose of this section is to extend the Main Theorem in \cite{WYZZ22} slightly to rational maps with extra parabolic points.
Let $f$ be a rational map in Theorem \ref{thm:local-connectivity}. By the assumption, the periodic Fatou components of $f$ can only be attracting basins (including super-attracting), parabolic basins or bounded type Siegel disks. Note that the periodic cycles in $J(f)$ can only be parabolic or repelling.
Since $J(f)$ is connected by the assumption, all Fatou components of $f$ are simply connected. Without loss of generality, we assume that
\begin{enumerate}
    \item All periodic Fatou components of $f$ have period one (considering an appropriate iteration of $f$ if necessary) and they consist of\footnote{We assume that $f$ has at least one parabolic basin since otherwise, $f$ will become the same map as the one studied in \cite{WYZZ22}.}
\begin{itemize}
\item $r_0\geqslant 1$ fixed Siegel disks $\Delta_r$, where $1\leqslant r\leqslant r_0$ and $\infty\in\Delta_1$,
\item $s_0\geqslant 0$ fixed immediate attracting basins $A_s$, where $1\leqslant s\leqslant s_0$, and
\item $t_0\geqslant 1$ fixed immediate parabolic basins $P_t$ whose boundary contains the parabolic fixed point\footnote{Note that different immediate parabolic basins may correspond to the same parabolic fixed point.} $\zeta_t$, where $1\leqslant t\leqslant t_0$; and
\end{itemize}
    \item $f$ is postcritically finite in the attracting basins (if any, by a standard quasiconformal surgery, see \cite[Chapter 4]{BF14a}).
\end{enumerate}

In the rest of this section we fix the rational map $f$ according to the above normalization.
Note that all attracting and parabolic basins of $f$ are bounded in $\C$. This is convenient for us to use the Euclidean metric at some places.
For every $1\leqslant s \leqslant s_0$, there exists a small quasi-disk $B_s$ containing the unique critical point (without counting multiplicity) in $A_s$ such that
\begin{equation}\label{equ:f-B-s}
f(\overline{B}_s)\subset B_s.
\end{equation}
Suppose $f$ has exactly $t_0'\in[1,t_0]$ parabolic fixed points $\{\zeta_{t'}:1\leqslant t'\leqslant t_0'\}$ and that $f$ has exactly $p_{t'}\geqslant 1$ attracting directions at each $\zeta_{t'}$. Then $\sum_{t'=1}^{t_0'} p_{t'}=t_0$.
The following result is an immediate corollary of Lemma \ref{lem:att-p}.

\begin{cor}\label{cor:petal}
There exists $\delta>0$  such that for every immediate parabolic basin $P_t$ attaching at $\zeta_{t'}$ with attracting direction $\mv_t$, where $1\leqslant t'\leqslant t_0'$, there exists a quasi-disk $Q_t$ in $P_t$ having a piecewise smooth boundary such that
\begin{enumerate}
\item $P_t\cap O^+(\Crit(f))\subset Q_t$, $\overline{Q}_t\cap\partial P_t=\{\zeta_{t'}\}$ and $\overline{f(Q_t)}\subset Q_t\cup \{\zeta_{t'}\}$;
\item $A_{\mv_t}^{\zeta_{t'}}(\alpha,\delta)\subset Q_t\cap\D(\zeta_{t'},\delta)\subset A_{\mv_t}^{\zeta_{t'}}(\frac{1}{2}(\alpha+\frac{2\pi}{p_{t'}}),\delta)$, where $\alpha=\tfrac{31\pi}{16p_{t'}}$;
\item If $z\in \D(\zeta_{t'},\delta)$ and $z\not\in Q_{\mv}\cup\{\zeta_{t'}\}$ for any attracting vector $\mv$ of $f$ at $\zeta_{t'}$, then $z\in A_{\mv'}^{\zeta_{t'}}(\frac{\pi}{4p_{t'}},\delta)$ for a repelling sector $\mv'$ of $f$ at $\zeta_{t'}$.
\end{enumerate}
\end{cor}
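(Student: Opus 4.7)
The plan is to reduce the corollary directly to Lemma \ref{lem:att-p} applied separately at each of the finitely many parabolic fixed points $\zeta_{t'}$ ($1\leqslant t'\leqslant t_0'$), and then to combine the finitely many local radii into one uniform $\delta>0$.

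First, I would conjugate by the translation $z\mapsto z-\zeta_{t'}$ so that $\zeta_{t'}$ sits at the origin, placing the local dynamics at $\zeta_{t'}$ into the normal form \eqref{equ:f} with $p=p_{t'}$; the attracting and repelling vectors of $f$ at $\zeta_{t'}$ become the attracting and repelling vectors of the conjugated map at $0$, and the sectors $A_{\mv}^{\zeta_{t'}}(\alpha,\delta)$ defined in \eqref{equ:A-mv-1} become the centered sectors $A_{\mv}(\alpha,\delta)$ of the lemma. Next, for each immediate parabolic basin $P_t$ with attracting vector $\mv_t$, I take $\alpha=\tfrac{31\pi}{16p_{t'}}$ and observe that
\begin{equation*}
\tfrac{15\pi}{8p_{t'}}=\tfrac{30\pi}{16p_{t'}}<\tfrac{31\pi}{16p_{t'}}<\tfrac{32\pi}{16p_{t'}}=\tfrac{2\pi}{p_{t'}},
\end{equation*}
so this $\alpha$ lies in the admissible interval for both parts (a) and (b) of Lemma \ref{lem:att-p}. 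Since $J(f)$ is connected by assumption, every Fatou component of $f$ is simply connected; in particular each $P_t$ is simply connected, so the ``Moreover'' clause of Lemma \ref{lem:att-p} applies and produces a quasi-disk $Q_t\subset P_t$ with piecewise smooth boundary, with $P_t\cap O^+(\Crit(f))\subset Q_t$, $\overline{Q}_t\cap\partial P_t=\{\zeta_{t'}\}$, $\overline{f(Q_t)}\subset Q_t\cup\{\zeta_{t'}\}$, and
\begin{equation*}
A_{\mv_t}^{\zeta_{t'}}(\alpha,\delta_t)\subset Q_t\cap\D(\zeta_{t'},\delta_t)\subset A_{\mv_t}^{\zeta_{t'}}\bigl(\tfrac{1}{2}(\alpha+\tfrac{2\pi}{p_{t'}}),\delta_t\bigr)
\end{equation*}
for a suitable $\delta_t>0$. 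These are exactly properties (a) and (b).

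For property (c), part (b) of Lemma \ref{lem:att-p}, applied at $\zeta_{t'}$ with the same $\alpha$, asserts that after possibly shrinking $\delta_t$, every point $z\in\D(\zeta_{t'},\delta_t)$ that lies outside $\{\zeta_{t'}\}$ and outside $Q_{\mv}$ for every attracting vector $\mv$ at $\zeta_{t'}$ must lie in a thin repelling sector $A_{\mv'}^{\zeta_{t'}}(\tfrac{\pi}{4p_{t'}},\delta_t)$. Finally, setting $\delta:=\min_{1\leqslant t'\leqslant t_0'}\delta_{t'}$ gives a single positive number that works simultaneously for all immediate parabolic basins, since $t_0'<\infty$. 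There is essentially no obstacle here: the only small points to watch are the verification that the chosen $\alpha$ is admissible for both parts of Lemma \ref{lem:att-p} and the uniform choice of $\delta$; all the analytic content (existence of the petal, the quasi-disk refinement absorbing $O^+(\Crit(f))\cap P_t$, and the sector sandwich) is already contained in Lemma \ref{lem:att-p}.
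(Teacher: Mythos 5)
Your proposal is correct and matches the route the paper takes: the corollary is indeed obtained by applying Lemma \ref{lem:att-p} (after translating each $\zeta_{t'}$ to the origin) with the specific choice $\alpha=\tfrac{31\pi}{16p_{t'}}$ — which you correctly verify lies in the admissible interval $(\tfrac{15\pi}{8p_{t'}},\tfrac{2\pi}{p_{t'}})$ — invoking the ``Moreover'' clause (justified since $J(f)$ connected forces each $P_t$ to be simply connected), and then taking the minimum of the finitely many resulting radii. This is exactly why the paper presents it as an immediate corollary.
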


By remarking the subscripts, we use $\{Q_t:1\leqslant t\leqslant t_0\}$ to denote the collection of all attracting petals obtained in Corollary \ref{cor:petal}, where $Q_t\subset P_t$ for all $1\leqslant t\leqslant t_0$. Decreasing $\delta>0$ if necessary, in the following we assume that $\overline{\D}(\zeta_{t'},\delta)\cap \overline{\D}(\zeta_{t''},\delta)=\emptyset$ for any two different parabolic fixed points $\zeta_{t'}$ and $\zeta_{t''}$.

\medskip
By Corollary \ref{cor:petal}(a) and the assumption in Theorem \ref{thm:local-connectivity}, the following set is finite or empty:
\begin{equation}\label{equ:MP-1}
\MP_1(f) := \MP(f)\setminus \Big(\bigcup_{r=1}^{r_0}\overline{\Delta}_r \cup \bigcup_{s=1}^{s_0}\overline{B}_s \cup \bigcup_{t=1}^{t_0}\overline{Q}_t\Big).
\end{equation}
Denote
\begin{equation}\label{equ:W-W1}
W:=\EC\setminus\Big(\MP(f)\cup \bigcup_{r=1}^{r_0}\overline{\Delta}_r \cup \bigcup_{s=1}^{s_0}\overline{B}_s \cup \bigcup_{t=1}^{t_0}\overline{Q}_t\Big)
\text{\quad and\quad}
W_1:=W\cup \MP_1(f).
\end{equation}
Note that $W$ and $W_1$ are domains in $\EC$ (actually are bounded domains in $\C$ by the assumption $\infty\in\Delta_1$).
By \eqref{equ:f-B-s} and Corollary \ref{cor:petal}(a), we have\footnote{Note that $f^{-1}(W)$ and  $f^{-1}(W_1)$ may have several connected components.} $f^{-1}(W)\subset W$ and  $f^{-1}(W_1)\subset W_1$.

\begin{lem}\label{lem:contrac}
There exists $\delta_0>0$ such that for any $\varepsilon>0$, there exists $N\geqslant 1$, such that for any Jordan domain $V_0$ in $W_1$ with $\diam_{\EC}(V_0)< \delta_0$, then $\diam_{\EC}(V_n)<\varepsilon$ for all $n\geqslant N$, where $V_n$ is any component of $f^{-n}(V_0)$.
\end{lem}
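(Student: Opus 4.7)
The plan is to cover the compact set $\overline{W_1}\subset\C$ (compact since $\infty\in\Delta_1$) by finitely many Jordan open sets on which a pullback-shrinking statement is already available, and to obtain $\delta_0$ from the Lebesgue number lemma. The three ``building-block'' shrinking statements are: Lemma~\ref{lem:semi-hyperbolic} (where the closure avoids $\MP(f)$, giving conformal pullbacks); Lemma~\ref{main-lem-WYZZ} (where the closure meets $\MP(f)$ only on a Siegel boundary $\partial\Delta_r$); and Lemma~\ref{lem:parabolic-rat} (inside a thin repelling sector at a parabolic fixed point $\zeta_{t'}$).

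For each $z_0\in\overline{W_1}$ I would assign a Jordan neighborhood $U_{z_0}$ as follows. If $z_0\notin\MP(f)$, pick $U_{z_0}$ with $\overline{U}_{z_0}\cap\MP(f)=\emptyset$ and apply Lemma~\ref{lem:semi-hyperbolic} with $D=1$. If $z_0\in\partial\Delta_r$, pick $U_{z_0}$ with $\emptyset\neq\overline{U}_{z_0}\cap\MP(f)\subset\partial\Delta_r$ and apply Lemma~\ref{main-lem-WYZZ}. If $z_0=\zeta_{t'}$, take $U_{z_0}=\D(\zeta_{t'},\delta/2)$; by Corollary~\ref{cor:petel} (c) (applied at $\zeta_{t'}$), $U_{z_0}\cap W_1$ is a disjoint union of pieces of the thin repelling sectors $A_{\mv'}^{\zeta_{t'}}(\delta)$, so any small connected $V_0\subset U_{z_0}\cap W_1$ lies inside a single sector and Lemma~\ref{lem:parabolic-rat} applies uniformly over the finitely many sectors. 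Finally, for each $z_0\in\MP_1(f)$---a preperiodic postcritical point in $J(f)$, as forced by hypotheses (1)--(3) of Theorem~\ref{thm:local-connectivity} together with the normalizations $f(\overline{B}_s)\subset B_s$ and $P_t\cap O^+(\Crit(f))\subset Q_t$---pick $U_{z_0}$ so small that $\overline{U}_{z_0}\cap\MP(f)=\{z_0\}$, establish a uniform branched-degree bound $D$ on $f^{\circ n}:U_n\to U_{z_0}$ for every pullback sequence via the preperiodic structure of the orbit of $z_0$, and apply Lemma~\ref{lem:semi-hyperbolic} with this $D$.

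Extracting a finite subcover $U_1,\ldots,U_m$ of $\overline{W_1}$, set $N:=\max_j N_j$, where $N_j$ is the constant produced on $U_j$, and take $\delta_0$ to be a Lebesgue number of this cover. For any Jordan $V_0\subset W_1$ with $\diam_{\EC}(V_0)<\delta_0$, the closure $\overline{V}_0\subset\overline{W_1}$ sits inside some $U_j$; then any component $V_n$ of $f^{-n}(V_0)$ is contained, by connectedness, in a component of $f^{-n}(U_j)$ (or of the ambient thin repelling sector, in the parabolic case), whose spherical diameter is less than $\varepsilon$ for all $n\geq N_j$.

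The main obstacle is the case $z_0\in\MP_1(f)$: since $z_0$ is a critical value of some iterate $f^{\circ k}$, pullbacks of a neighborhood of $z_0$ are ramified and their degrees could in principle grow with $n$. The resolution is to exploit that $z_0$ is preperiodic---its forward orbit terminates in a linearizable repelling cycle (in which case the branches of $f^{\circ n}$ over $U_{z_0}$ factor through a bounded number of critical-value landings, yielding a uniform $D$), or it lands at a parabolic fixed point $\zeta_{t'}$ (reducing the problem, after finitely many iterations, to Lemma~\ref{lem:parabolic-rat}). Either way, the required uniform degree bound on all pullbacks of $U_{z_0}$ exists, closing the argument.
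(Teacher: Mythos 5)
Your proposal follows essentially the same route as the paper's proof. Both arguments cover $W_1$ (you cover $\overline{W_1}$) by finitely many Jordan domains, each of one of three types: a domain whose closure misses $\MP(f)$ or meets it in a single point of $\MP_1(f)$ (handled by the classical shrinking Lemma~\ref{lem:semi-hyperbolic}); a domain whose closure meets $\MP(f)$ only on some Siegel boundary $\partial\Delta_r$ (handled by Lemma~\ref{main-lem-WYZZ}); and a domain lying in a thin repelling sector at a parabolic fixed point $\zeta_{t'}$, obtained from Corollary~\ref{cor:petal}(c) (handled by Lemma~\ref{lem:parabolic-rat}). Then $N$ is the maximum over the finitely many pieces and $\delta_0$ is a Lebesgue-type number for the cover. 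Your slight variations --- covering the compact closure $\overline{W_1}$ and quoting the Lebesgue number lemma explicitly, and splitting the paper's single bullet $\sharp\bigl(U_k\cap\MP_1(f)\bigr)\leqslant 1$ into two sub-cases --- are cosmetic.

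The one place where you go into more detail than the paper is the degree bound needed to invoke Lemma~\ref{lem:semi-hyperbolic} at a point $z_0\in\MP_1(f)$; the paper simply cites the three lemmas without addressing branching. Your discussion of why a uniform $D$ exists (via the preperiodic structure of the forward orbit of $z_0$ and the fact that $\overline{U}_{z_0}\cap\MP(f)=\{z_0\}$ forces every critical orbit to meet $U_{z_0}$ only at $z_0$) is correct in spirit but stops at a sketch, in particular for the delicate case where the orbit of $z_0$ terminates on a repelling cycle inside $\MP_1(f)$ and a critical orbit visits that cycle infinitely often; there one must argue that a fixed pullback sequence can realize only boundedly many ramified steps. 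This matches the level of detail in the paper's own proof, which leaves the same point implicit, so it is not a gap relative to the paper. Minor: your reference \verb|\ref{cor:petel}| is a typo for \verb|\ref{cor:petal}|.
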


\begin{proof}
According to \cite{Zha11}, every $\partial{\Delta_r}$ is a quasi-circle and $\overline{\Delta}_{r'}\cup\overline{\Delta}_{r''}=\emptyset$ for any different integers $r',r''\in [1,r_0]$.
Note that $\partial W_1$ consists of $r_0+s_0+t_0'$ connected components:
\begin{itemize}
\item $r_0+s_0$ of them are Jordan curves;
\item $t_0'$ of them are written as $\bigcup_{i=1}^{p_{t'}}\partial Q_{t'(i)}$, where $1\leqslant t'\leqslant t_0'$ and $\{Q_{t'(i)}:1\leqslant i\leqslant p_{t'}\}$ are attracting petals attaching at the parabolic fixed point $\zeta_{t'}$.
\end{itemize}
This implies that there exist finitely many Jordan domains $\{U_k:1\leqslant k\leqslant k_0\}$ in $W_1$ and a small number $\delta_0>0$ such that
\begin{itemize}
    \item $W_1\subset \bigcup_{k=1}^{k_0} U_k$;
    \item Every $U_k$ satisfies one of the following cases:
    \begin{itemize}
    \item $\sharp(U_k\cap \MP(f))=\sharp(U_k\cap \MP_1(f))\leqslant 1$, or
    \item $\overline{U}_k\cap \MP(f)\subset \partial\Delta_r$ for some $1\leqslant r\leqslant r_0$, or
    \item $\overline{U}_k\cap \MP(f)=\{\zeta_{t'}\}$ is a parabolic fixed point and $U_k$ is contained in a thin repelling sector $A_{\mv'}^{\zeta_{t'}}(\frac{\pi}{4p_{t'}},\delta)$ of $f$ at $\zeta_{t'}$ (by Corollary \ref{cor:petal}(c));
    \end{itemize}
    \item Any Jordan domain $V_0$ in $W_1$ with $\diam_{\EC}(V_0)<\delta_0$ is contained in some $U_k$.
\end{itemize}
Given $\varepsilon>0$, by Lemmas \ref{lem:semi-hyperbolic}, \ref{lem:parabolic-rat} and \ref{main-lem-WYZZ}, there exists $N\geqslant 1$ such that for all $n\geqslant N$, any component of $f^{-n}(U_k)$ has spherical diameter less than $\varepsilon$.
\end{proof}

Let $\N_+:=\{1,2,3,\cdots\}$ be the set of all positive integers. The following definition can be found in \cite{Thu84} (see also \cite[Appendix A.2]{McM94b} and \cite[Appendix E]{Mil06}).

\begin{defi}[Orbifolds]
A pair $\MO=(S,\nu)$ consisting of a Riemann surface $S$ and a ramification function $\nu:S\to\N_+$ which takes the value $\nu(z)=1$ except on a discrete closed subset is called a \textit{Riemann surface orbifold} (\textit{orbifold} in short). A point $z\in S$ is called a \textit{ramified point} if $\nu(z)\geqslant 2$.
\end{defi}

Suppose that $h:\widetilde{S}\to S$ is a holomorphic map of Riemann surfaces, and that $\widetilde{\MO}=(\widetilde{S},\widetilde{\nu})$ and $\MO=(S,\nu)$ are orbifolds. Then
\begin{itemize}
\item The map $h:\widetilde{\MO}\to \MO$ is \textit{holomorphic} if $\nu(h(w))$ divides $\deg_{w}(h)\,\widetilde{\nu}(w)$ for all $w\in \widetilde{S}$, where $\deg_{w}(h)$ is the local degree of $h$ at $w$;
\item The map $h:\widetilde{\MO}\to \MO$ is an \textit{orbifold covering} if $h:\widetilde{S}\to S$ is a branched covering and $\nu(h(w))=\deg_{w}(h)\,\widetilde{\nu}(w)$ for all $w\in \widetilde{S}$;
\item If $h:\widetilde{\MO}\to \MO$ is an orbifold covering, $\widetilde{S}$ is simply connected and $\widetilde{\nu}\equiv 1$ on $\widetilde{S}$, then $h$ is called a \textit{universal covering}.
\end{itemize}

With only two exceptions, each orbifold $\MO=(S,\nu)$ has a universal covering surface which is conformally equivalent to either $\EC$, $\C$ or $\D$. In the last case the orbifold is called \textit{hyperbolic}, and we only consider such type of orbifolds in this paper. In particular, if $S$ is a hyperbolic Riemann surface, then $\MO$ is a hyperbolic orbifold (see \cite[Theorem A.2]{McM94b}) and the universal covering $h:\D\to \MO$ descends the hyperbolic metric $\rho_{\D}(w)|dw|$ in $\D$ to the \textit{orbifold metric} $\sigma_{\MO}(z)|dz|$ in $S$ which satisfies $\sigma_{\MO}(h(w))|h'(w)|=\rho_{\D}(w)$.

The well-known Schwarz-Pick theorem for hyperbolic Riemann surfaces generalizes to the setting of hyperbolic orbifolds \cite[Proposition 17.4]{Thu84} (see also \cite[Theorem A.3]{McM94b}). In particular, it has the following consequence.

\begin{lem}\label{lem:Thurston}
Let $\widetilde{\MO}=(\widetilde{S},\widetilde{\nu})$ and $\MO=(S,\nu)$ be two hyperbolic orbifolds such that $\widetilde{S}\subset S$. Suppose $h: \widetilde{\MO}\to \MO$ is an orbifold covering map, and that the inclusion $\widetilde{\MO}\hookrightarrow \MO$ is holomorphic but not an orbifold covering. Then
\begin{equation}
\|D h(z)\|_{\mathcal{O}}:=|h'(z)| \frac{\sigma_{\mathcal{O}}(h(z))}{\sigma_{\mathcal{O}}(z)}>1, \quad \text { for } z \in \widetilde{\MO}.
\end{equation}
\end{lem}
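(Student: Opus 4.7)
The plan is to reduce the statement to the classical strict Schwarz-Pick lemma on the unit disk via universal covers. Since $h:\widetilde{\MO}\to\MO$ is an orbifold covering, it is a local isometry between the orbifold hyperbolic metrics, i.e.,
\[
|h'(z)|\,\sigma_{\MO}(h(z)) = \sigma_{\widetilde{\MO}}(z),\quad z\in\widetilde{S}.
\]
Hence $\|Dh(z)\|_{\MO}=\sigma_{\widetilde{\MO}}(z)/\sigma_{\MO}(z)$, and the desired inequality is equivalent to the pointwise strict comparison $\sigma_{\widetilde{\MO}}(z)>\sigma_{\MO}(z)$ for $z\in\widetilde{S}$.

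Next I would lift the inclusion $\iota:\widetilde{\MO}\hookrightarrow\MO$ to the universal covers. Fix holomorphic universal coverings $\pi:\D\to\MO$ and $\widetilde{\pi}:\D\to\widetilde{\MO}$. The holomorphy of $\iota$ as an orbifold map, together with the simple connectedness of $\D$, produces a holomorphic lift $\widetilde{\iota}:\D\to\D$ satisfying $\pi\circ\widetilde{\iota}=\iota\circ\widetilde{\pi}$. The crucial observation is that $\widetilde{\iota}$ cannot be a M\"obius automorphism of $\D$: if it were, then the identity $\iota\circ\widetilde{\pi}=\pi\circ\widetilde{\iota}$ would exhibit $\iota$ as a composition of orbifold coverings, forcing $\iota$ itself to be an orbifold covering and contradicting the hypothesis.

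The final step applies the strict Schwarz-Pick inequality: any holomorphic self-map of $\D$ that is not a M\"obius automorphism satisfies $|\widetilde{\iota}'(w)|\,\rho_{\D}(\widetilde{\iota}(w))<\rho_{\D}(w)$ for every $w\in\D$. For $z\in\widetilde{S}$, choose $w\in\widetilde{\pi}^{-1}(z)$; differentiating $\pi\circ\widetilde{\iota}=\iota\circ\widetilde{\pi}$ and using $\iota'\equiv 1$ yields $|\widetilde{\pi}'(w)|=|\pi'(\widetilde{\iota}(w))|\,|\widetilde{\iota}'(w)|$. Combining this with the two defining relations $\sigma_{\widetilde{\MO}}(z)|\widetilde{\pi}'(w)|=\rho_{\D}(w)$ and $\sigma_{\MO}(z)|\pi'(\widetilde{\iota}(w))|=\rho_{\D}(\widetilde{\iota}(w))$ gives
\[
\frac{\sigma_{\widetilde{\MO}}(z)}{\sigma_{\MO}(z)}=\frac{\rho_{\D}(w)}{|\widetilde{\iota}'(w)|\,\rho_{\D}(\widetilde{\iota}(w))}>1,
\]
which is precisely the comparison needed.

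The main point requiring care is the assertion in the second paragraph that ``$\iota$ not an orbifold covering'' forces $\widetilde{\iota}$ to be non-M\"obius. This ultimately reduces to a local degree comparison at a ramified point: taking local degrees in $\pi\circ\widetilde{\iota}=\iota\circ\widetilde{\pi}$ and using $\deg_z(\iota)=1$ gives $\deg_w(\widetilde{\iota})=\widetilde{\nu}(z)/\nu(z)$, so a point where the orbifold covering identity $\nu(z)=\widetilde{\nu}(z)$ fails produces a branch point of $\widetilde{\iota}$, which then cannot be an automorphism of $\D$. Once this non-automorphism property is secured, the remaining two steps are standard.
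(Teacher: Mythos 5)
The paper does not prove Lemma \ref{lem:Thurston}; it states it as a consequence of the orbifold Schwarz-Pick theorem and cites \cite[Proposition~17.4]{Thu84} and \cite[Theorem~A.3]{McM94b}. Your argument supplies a complete proof and follows what is essentially the standard route behind those references: use that the orbifold covering $h$ is a local isometry to reduce the inequality $\|Dh(z)\|_{\MO}>1$ to the metric comparison $\sigma_{\widetilde{\MO}}(z)>\sigma_{\MO}(z)$, lift the inclusion to a holomorphic self-map $\widetilde{\iota}$ of $\D$ via the universal covers, show $\widetilde{\iota}$ is not an automorphism, and invoke the classical strict Schwarz-Pick inequality. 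Each of these steps is correct: the existence of the lift follows from the holomorphy of the inclusion (which forces $\nu(z)\mid\widetilde{\nu}(z)$, so $\iota\circ\widetilde{\pi}:\D\to\MO$ is a holomorphic orbifold map from a simply connected trivial orbifold); the computation $\deg_w(\widetilde{\iota})=\widetilde{\nu}(z)/\nu(z)$ is right; and the metric bookkeeping in the last display is clean.

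One small imprecision: the closing paragraph asserts that the non-automorphism claim ``ultimately reduces to a local degree comparison at a ramified point.'' This covers only the failure mode $\widetilde{\nu}(z)\neq\nu(z)$. The inclusion also fails to be an orbifold covering when $\widetilde{S}\subsetneq S$ with $\widetilde{\nu}=\nu|_{\widetilde{S}}$, and in that case there may be no ramified point at all. That case is nonetheless handled by the cancellation argument in your second paragraph: if $\widetilde{\iota}$ were an automorphism then $\pi\circ\widetilde{\iota}(\D)=S$, while $\iota\circ\widetilde{\pi}(\D)=\widetilde{S}$, forcing $\widetilde{S}=S$, and the local-degree computation then forces $\widetilde{\nu}=\nu$, so $\iota$ is the identity orbifold covering, contradicting the hypothesis. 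So there is no real gap, but the ``reduces to'' phrasing should be softened to make clear that the surjectivity/properness failure is a separate (and easier) case.
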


For the hyperbolic Riemann surface $W_1$ introduced in \eqref{equ:W-W1}, we define
\begin{equation}\label{equ:nu}
\nu(z):=
\left\{
\begin{array}{ll}
1  &~~~~~~~\text{if}~ z\in W_1\setminus \MP_1(f), \\
\text{lcm} \big\{\deg_{w}(f)\nu(w):w\in f^{-1}(z)\big\} &~~~~~~\text{if}~ z\in \MP_1(f).
\end{array}
\right.
\end{equation}
Since $\MP_1(f)$ is finite or empty, we conclude that $\nu$ is well-defined. Then
\begin{equation}\label{equ:O-f}
\MO_f:=(W_1,\nu)
\end{equation}
is a hyperbolic orbifold and we use $\sigma_{\MO_f}(z)|dz|$ to denote the orbifold metric therein.
Let $\widetilde{W}_1$ be any connected component of $f^{-1}(W_1)$. Then $\widetilde{W}_1$ is a proper subset of $W_1$. The orbifold $(\widetilde{W}_1,\widetilde{\nu})$ is defined by the ramification index
\begin{equation}\label{equ:nu-tilde}
\widetilde{\nu}(z):=\frac{\nu(f(z))}{\deg_z(f)}, \text{\quad for }z\in \widetilde{W}_1.
\end{equation}
Since $f : \widetilde{W}_1\to W_1$ is a branched covering, based on \eqref{equ:nu} and \eqref{equ:nu-tilde}, it is straightforward to verify that $f: (\widetilde{W}_1,\widetilde{\nu})\to(W_1,\nu)$ is an orbifold covering map.

\medskip
Recall that $f$ has exactly $r_0$ Siegel disks $\{\Delta_r:1\leqslant r\leqslant r_0\}$ and $t_0'$ parabolic fixed points $\{\zeta_{t'}:1\leqslant t'\leqslant t_0'\}$.
For $z\in\EC$, we define
\begin{equation}
d_{\Sie+\Par}(z):=\dist_{\EC}\Bigg(z,\,\,\bigcup^{r_0}_{r=1}\overline{\Delta}_r\cup\bigcup_{t'=1}^{t_0'}\{\zeta_{t'}\}\Bigg).
\end{equation}
Note that $\partial\widetilde{W}_1\cap\partial W_1$ may be non-empty. However, for any $\delta>0$, the set $\{z\in \widetilde{W}_1: d_{\Sie+\Par}(z)>\delta\}$ is compactly contained $W_1$.
Hence as an immediate corollary of Lemma \ref{lem:Thurston}, we have the following result.

\begin{lem}\label{lem:expand-orbifold}
For every $\delta>0$, there exists $\mu=\mu(\delta)>1$ such that
\begin{equation}
\| Df(z) \|_{\MO_f}=|f'(z)| \frac{\sigma_{\MO_f}(f(z))}{\sigma_{\MO_f}(z)}\geqslant \mu>1
\end{equation}
for $z\in \widetilde{W}_1$ such that $d_{\Sie+\Par}(z)>\delta$.
\end{lem}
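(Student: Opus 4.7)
The plan is to combine Lemma~\ref{lem:Thurston} with a standard compactness argument, using that the pointwise strict inequality is already available from the orbifold Schwarz–Pick principle.

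First, I would check that the inclusion $\iota : (\widetilde{W}_1,\widetilde{\nu}) \hookrightarrow (W_1,\nu)$ satisfies the hypotheses of Lemma~\ref{lem:Thurston}. Holomorphicity (in the orbifold sense) amounts to verifying that $\nu(w)$ divides $\widetilde{\nu}(w)$ for every $w \in \widetilde{W}_1$. For $w \in \widetilde{W}_1\setminus\MP_1(f)$ this is trivial since $\nu(w)=1$. For $w$ with $f(w)=z \in \MP_1(f)$, the definition \eqref{equ:nu} of $\nu(z)$ as the least common multiple of $\deg_{w'}(f)\,\nu(w')$ over $w'\in f^{-1}(z)$ gives in particular that $\deg_w(f)\nu(w)$ divides $\nu(z)$, so by \eqref{equ:nu-tilde} we have $\nu(w)\mid \widetilde{\nu}(w) = \nu(z)/\deg_w(f)$. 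The inclusion is not an orbifold covering because $\widetilde{W}_1 \subsetneq W_1$, so it fails already as a branched covering (it is not surjective). Both orbifolds are hyperbolic: $W_1$ is a domain in $\EC$ omitting the Siegel disks and the petals, hence hyperbolic as a Riemann surface, so a fortiori $\widetilde{W}_1\subset W_1$ is hyperbolic.

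With the hypotheses verified, Lemma~\ref{lem:Thurston} applied to $\iota$ yields
\begin{equation}
\sigma_{\MO_f}(z) < \sigma_{(\widetilde{W}_1,\widetilde{\nu})}(z) \quad\text{for every } z\in \widetilde{W}_1.
\end{equation}
On the other hand, since $f:(\widetilde{W}_1,\widetilde{\nu})\to (W_1,\nu)$ is an orbifold covering, it is a local isometry between the respective orbifold metrics, so
\begin{equation}
|f'(z)|\,\sigma_{\MO_f}(f(z)) = \sigma_{(\widetilde{W}_1,\widetilde{\nu})}(z).
\end{equation}
Dividing these two relations gives the pointwise strict inequality
\begin{equation}
\|Df(z)\|_{\MO_f} \;=\; \frac{\sigma_{(\widetilde{W}_1,\widetilde{\nu})}(z)}{\sigma_{\MO_f}(z)} \;>\; 1 \quad\text{for all } z\in \widetilde{W}_1.
\end{equation}

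To upgrade this to a uniform bound on $K_\delta := \{z\in\widetilde{W}_1: d_{\Sie+\Par}(z)\geqslant \delta\}$, I would use the remark preceding the statement: $K_\delta$ is compactly contained in $W_1$. The function $z\mapsto \|Df(z)\|_{\MO_f}$ is continuous on $W_1$ (even across the finitely many orbifold ramification points in $\MP_1(f)$, since the ratio of the two orbifold metrics has matching singularities there by the divisibility $\nu\mid \widetilde{\nu}$), so it attains a minimum on the compact set $K_\delta$; by the previous paragraph this minimum is some $\mu=\mu(\delta)>1$, which is the desired bound.

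The only nontrivial step in the above plan is the orbifold-holomorphicity of the inclusion, which was arranged precisely by defining $\nu$ via the least common multiple in \eqref{equ:nu}; everything else is a direct application of Schwarz–Pick on hyperbolic orbifolds together with the compactness observation.
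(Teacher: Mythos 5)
Your proof is essentially the paper's own argument: the paper presents the result as ``an immediate corollary of Lemma~\ref{lem:Thurston}'' after observing, in the paragraph preceding the statement, that $\{z\in\widetilde{W}_1 : d_{\Sie+\Par}(z)>\delta\}$ is compactly contained in $W_1$, and you have correctly spelled out the verification of the hypotheses of Lemma~\ref{lem:Thurston} together with the compactness step. One minor correction to the final paragraph: $\|Df(z)\|_{\MO_f}=\sigma_{(\widetilde{W}_1,\widetilde{\nu})}(z)/\sigma_{\MO_f}(z)$ is \emph{not} continuous and finite everywhere on $\widetilde{W}_1$; the divisibility $\nu\mid\widetilde{\nu}$ only guarantees $\widetilde{\nu}(z_0)\geqslant\nu(z_0)$, and at a point with $\widetilde{\nu}(z_0)>\nu(z_0)$ (for instance a non-postcritical preimage in $\widetilde{W}_1$ of a ramified point of $\MP_1(f)$, where $\nu(z_0)=1$ but $\widetilde{\nu}(z_0)=\nu(f(z_0))>1$) the two singularities do not match and the ratio tends to $+\infty$. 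This, however, only reinforces the conclusion: the function is lower semicontinuous with values in $(1,+\infty]$, so it still attains an infimum strictly greater than $1$ on the compact set, which is all the argument requires.
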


The above result shows that $f$ expands the orbifold metric at the points in $\widetilde{W}_1$ which are away from Siegel disks and parabolic fixed points.

\subsection{Boundaries of attracting and parabolic basins}

To prove the local connectivity of the Julia sets of the maps $f$ in Theorem \ref{thm:local-connectivity}, we shall use the criterion in Lemma \ref{lem:LC-criterion}.
The main goal in this subsection is to prove the following result.

\begin{prop}\label{prop:attracting}
The boundary of every immediate attracting and parabolic basin of $f$ is locally connected.
\end{prop}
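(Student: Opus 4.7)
The plan is to handle the attracting and parabolic cases in parallel. For each immediate basin $U \in \{A_s, P_t\}$, let $V \in \{B_s, Q_t\}$ be the associated forward-invariant quasi-disk inside $U$ supplied by \eqref{equ:f-B-s} and Corollary \ref{cor:petal}, and set $S := \overline{V} \cap \partial U$, which is empty in the attracting case and equals $\{\zeta_{t'}\}$ in the parabolic case. Then $V$ captures all postcritical orbits in $U$, $f(\overline{V}) \subset V \cup S$, and $\overline{V} \cap \partial U = S$. I would define $K_0 := V$ and inductively $K_n$ to be the connected component of $f^{-n}(V)$ in $U$ containing $V$; a Riemann--Hurwitz count (using that every critical point of $f^n$ in $U$ eventually lands in $V$) shows each $K_n$ is a Jordan domain with $\overline{K}_n \cap \partial U = S$, and $\bigcup_n K_n = U$.

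Next I would cut each shell $K_{n+1} \setminus \overline{K}_n$ into small Jordan pieces. Pick a fundamental arc $\alpha_0 \subset \overline{K_1 \setminus V}$ joining $\partial V \setminus S$ to $\partial K_1 \setminus S$, with interior in $K_1 \setminus \overline{V}$ and disjoint from the finite set $\MP_1(f)$. The pullback arcs $f^{-n}(\alpha_0) \cap (K_{n+1} \setminus \overline{K}_n)$, together with $\partial K_n$ and $\partial K_{n+1}$, partition each shell into a finite family $\mathcal{Q}_n$ of Jordan quadrilaterals, each mapped by $f^n$ onto a fixed level-zero quadrilateral $Q_0^{(j)} \in \mathcal{Q}_0$ with bounded local degree. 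In the parabolic case I further split each $Q_0^{(j)}$ (and its pullbacks) into a ``core'' bounded away from $\zeta_{t'}$ and a ``tail'' contained in a small disk $\D(\zeta_{t'},\delta)$, arranged so that the closure of every level-zero core lies in $W_1$ from \eqref{equ:W-W1}.

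For the cores (and for attracting-case quadrilaterals), since their level-zero closures lie in $W_1$, the classical shrinking Lemma \ref{lem:semi-hyperbolic} first gives an $N_0$ such that every level-$n$ core has spherical diameter below the threshold $\delta_0$ of Lemma \ref{lem:contrac} for $n \geqslant N_0$, and then Lemma \ref{lem:contrac} yields $\diam_{\EC}(Q) < \varepsilon$ uniformly over all cores in $\mathcal{Q}_n$ for large $n$. The tail pieces are controlled by the local Fatou-coordinate estimate \eqref{equ:Phi-expand}: choosing $\delta$ small makes the level-zero tail of small diameter, the pullback tails staying in the parabolic branch shrink at rate $\sim n^{-1/p_{t'}}$, and pullback tails falling into thin repelling sectors $A_{\mv'}^{\zeta_{t'}}(\tfrac{\pi}{4p_{t'}},\delta)$ at $\zeta_{t'}$ are handled by Corollary \ref{cor:petal}(c) combined with Lemma \ref{lem:parabolic-rat}. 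Combining, for every $\varepsilon>0$ every piece of $K_{n+1} \setminus \overline{K}_n$ has spherical diameter $<\varepsilon$ for $n \geqslant N(\varepsilon)$.

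Finally, let $\psi: \D \to U$ be the Riemann uniformization. The nested Jordan preimages $\psi^{-1}(K_n)$, together with the crosscut arcs $\psi^{-1}(\alpha_n^{(i)})$ (and the analogous arcs bounding the tails in the parabolic case), form a locally finite tiling of $\D$ whose cells $\psi$ sends onto the pieces of the shell decomposition; the uniform shrinking of their diameters forces the impression of every prime end of $U$ to be a singleton, and Carath\'{e}odory's theorem produces a continuous extension $\overline{\psi}: \overline{\D} \to \overline{U}$. Hence $\partial U = \overline{\psi}(\partial \D)$ is locally connected. The main obstacle will be the parabolic case: naive quadrilaterals at level zero have $\zeta_{t'}$ in their closures and so sit outside $W_1$, and the tail pullbacks in the parabolic branch are not covered by the general contraction Lemma \ref{lem:contrac}. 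Stitching the tail contraction (via Lemma \ref{lem:parabolic-rat} and \eqref{equ:Phi-expand}) with the core contraction (Lemma \ref{lem:contrac}) into a single uniform estimate is the technical crux.
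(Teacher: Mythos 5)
Your decomposition of $U$ into nested Jordan domains $K_n = Q^{(n)}$ and shells is the same starting point as the paper, but the subsequent strategy --- cutting each shell into Jordan quadrilaterals, splitting those near $\zeta_{t'}$ into a core and a tail, and then arguing via prime-end impressions --- is genuinely different from what the paper does. The paper instead parametrizes $\partial Q^{(n)}$ by $\Gamma_n(\theta)=\Lambda(\theta,n)$ and shows this sequence of loops is uniformly Cauchy; the contraction for a single ray segment $\Lambda(\theta,[m,n+m])$ is obtained by first deforming $R_{\theta,n}$ (Lemma \ref{lem:essential-bd}) inside $W_1$, relative to endpoints, into a curve $\widetilde R_n^{ini}\cup\widetilde R_n^{ess}\cup\widetilde R_n^{end}$ whose essential part has \emph{orbifold length bounded uniformly in $n$}, via the expansion estimate of Lemma \ref{lem:expand-orbifold}, and then pulling back a covering of $\widetilde R_n^{ess}$ by finitely many Jordan domains via Lemma \ref{lem:contrac}.

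There are two substantive issues with your proposal. First, a factual error: it is \emph{not} true that $\overline{K}_n\cap\partial U=S$ for all $n$ in the parabolic case. As stated after \eqref{equ:Q-n}, $\overline{Q^{(n)}}\subset Q^{(n+1)}\cup\bigl(f^{-n}(\zeta_{t'})\cap\partial P_t\bigr)$, so for $n\geqslant 1$ the closure $\overline{K}_n$ meets $\partial P_t$ at a \emph{growing} finite set of preimages of $\zeta_{t'}$, not only at $\zeta_{t'}$. Consequently your tail/core split has to be carried out near all of these boundary contact points, and their number grows with $n$. This feeds into the second and more serious issue, which you yourself flag as "the technical crux" and then leave open: establishing a shrinking rate, uniform over the growing collection of tail quadrilaterals, is precisely what is hard. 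Lemma \ref{lem:contrac} requires the seed domain to lie in $W_1$, which the tails do not; Lemma \ref{lem:parabolic-rat} addresses pullbacks of thin \emph{repelling} sectors and does not cover curves inside the attracting basin $P_t$ whose closures hit $\zeta_{t'}$ (where the postcritical set accumulates); and the classical shrinking Lemma \ref{lem:semi-hyperbolic} is not applicable across $\MP(f)$. So as written the proposal states the key estimate without a mechanism to prove it. The paper's way around this difficulty is structural rather than local: the homotopy deformation pushes the entire ray segment (except two uniformly small ends) into the region where the orbifold metric is uniformly expanded, so one never needs to estimate a tail piece inside a parabolic neighborhood at all, and the uniformity in the parametrization $\theta\in\T$ comes for free because a single curve, not a spreading family of quadrilaterals, is tracked. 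Until you supply an analogue of Lemma \ref{lem:essential-bd} --- or some other uniform-in-$n$ estimate for the parabolic tails --- the argument is incomplete.
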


The idea to prove Proposition \ref{prop:attracting} is constructing a sequence of Jordan curves and proving that it is convergent uniformly. The convergence is based on the several expanding properties obtained in the last subsection.

\medskip
For the normalized rational map $f$ in Theorem \ref{thm:local-connectivity} (i.e., $\infty$ is contained in a Siegel disk of $f$), all attracting and parabolic basins are bounded in $\C$.
Note that every immediate attracting basin $A_s$ of $f$ is super-attracting and contains exactly one critical point (without counting multiplicity), where $1\leqslant s\leqslant s_0$.
There exists a conformal map $\psi: \D\to A_s$ which conjugates $w\mapsto w^u: \D\to \D$ to $f: A_s\to A_s$ for some integer $u\geqslant 2$.
For $r\in \R$ and $\theta\in\T:=\R/\Z$, we denote
\begin{equation}
R_\theta(r):=\psi\big((\tfrac{1}{2})^{\frac{1}{u^r}}e^{2\pi\ii\theta}\big).
\end{equation}
The image $R_\theta :=R_\theta\big(\R\big)$ is the \textit{internal ray} of angle $\theta$ in $A_s$.

\begin{defi}[Ray segments in $A_s$]
For every $\theta\in\T$ and $n\geqslant 1$, the following curve is called a \textit{ray segment} in the immediate super-attracting basin $A_s$:
\begin{equation}\label{defi-ray-seg-attr}
R_{\theta,n}:=R_\theta\big([1,n+1]\big).
\end{equation}
\end{defi}

For a given immediate parabolic basin $P_t$ which attaches at the parabolic fixed point $\zeta_{t'}$, where $1\leqslant t'\leqslant t_0'$, let $Q^{(0)}:=Q_t$ be the quasi-disk in $P_t$ determined by Corollary \ref{cor:petal}.
Define
\begin{equation}\label{equ:Q-n}
Q^{(n)}:=P_t\cap f^{-n}(Q^{(0)}).
\end{equation}
Then there exists an integer $u\geqslant 2$ such that for all $n\geqslant 0$,
\begin{itemize}
\item $Q^{(n)}$ is a quasi-disk in $P_t$ having a piecewise smooth boundary;
\item $\overline{Q^{(n)}}\subset Q^{(n+1)}\cup(f^{-n}(\zeta_{t'})\cap\partial P_t)$; and
\item $f:Q^{(n+1)}\to Q^{(n)}$ is a branched covering of degree $u\geqslant 2$.
\end{itemize}

In order to prove the local connectivity of $\partial P_t$, we give a typical parameterization of $(\partial Q^{(n)})_{n\geqslant 0}$ (see \cite[p.\,520]{ARS22} for example) and prove that the sequence of these curves converges uniformly.
Based on the above properties of $(Q^{(n)})_{n\geqslant 0}$, there exists a continuous function
\begin{equation}
\Lambda:\T\times [0,1]\to \overline{P}_t
\end{equation}
such that
\begin{itemize}
    \item $\Lambda(\cdot,0):\T\to \partial{Q^{(0)}}$ is a homeomorphism;
    \item $\Lambda(\cdot,1):\T\to \partial{Q^{(1)}}$ is a homeomorphism;
    \item $\Lambda(0,r)=\zeta_{t'}$ for all $r\in[0,1]$; and
    \item $\Lambda(\theta,r)\in P_t\setminus \overline{Q^{(0)}}$ for $\theta\neq 0$ and all $r\in(0,1)$.
\end{itemize}
This define a homotopy between $\partial{Q^{(0)}}$ and $\partial{Q^{(1)}}$ in $\overline{P}_t$.

\medskip
Let $\eta_0(\theta):=\Lambda(\theta,[0,1])$ be the curve connecting $z_0(\theta):=\Lambda(\theta, 0)$ with $z_1(\theta):=\Lambda(\theta, 1)$. We may assume that $\Lambda$ is smooth such that for $\theta$ sufficiently close to $0$,  $z_1(\theta)$ is the image of $z_0(\theta)$ under the branch of $f^{-1}$ that fixes $\zeta_{t'}$.
Now we extend $\Lambda$ to a map on $\T \times[0, +\infty)$ as follows. For $\theta \in \T$, let $\theta_1 \in \T$ be the unique number such that $f(z_1(\theta))=z_0(\theta_1)$. Since $f: P_t\setminus \overline{Q^{(1)}} \rightarrow P_t\setminus \overline{Q^{(0)}}$ is a covering map, there exists a unique continuous extension $\Lambda:\T \times[0,2] \to \overline{P}_t$ such that
\begin{equation}
f\big(\Lambda(\theta, r+1)\big)=\Lambda(\theta_1, r), \text{\quad for all } \theta \in \T \text{ and } r\in[0,1].
\end{equation}

Continuing inductively, we obtain the desired continuous extension:
\begin{equation}\label{equ:Lambda}
\Lambda: \T \times[0, +\infty) \rightarrow \overline{P}_t.
\end{equation}
For $\theta\in \T$ and $n \geqslant 0$, the curve $\eta_n(\theta):=\Lambda(\theta,[n, n+1])$ connects $\partial Q^{(n)}$ and $\partial Q^{(n+1)}$, and it is a pullback of the curve $\eta_0(\theta_n)$ for some $\theta_n \in \T$ under $f^{\circ n}$.

\begin{defi}[Ray segments in $\overline{P}_t$]
For every $\theta\in\R/\Z$ and integer $n\geqslant 1$, the following curve is called a \textit{ray segment} in the closure of the immediate parabolic basin $P_t$:
\begin{equation}\label{defi-ray-seg-para}
R_{\theta,n}:= \Lambda(\theta,[1,n+1]).
\end{equation}
\end{defi}

A ray segment $R_{\theta,n}$ is \textit{degenerate} if it is a singleton for some $n\geqslant 1$.
A non-degenerate ray segment $R_{\theta,n}=\eta_1(\theta)\cup\eta_2(\theta)\cup\cdots\cup\eta_n(\theta)$ is a piecewise smooth curve connecting $\partial{Q^{(1)}}$ with $\partial{Q^{(n+1)}}$.
Recall that $\sigma_{\MO_f}(z)|dz|$ is the orbifold metric defined in $W_1$ (see \eqref{equ:O-f}). We may assume further that $\Lambda$ is smooth in $\T\times[0,1]$ such that:
The length of $R_{\theta,1}=\eta_1(\theta)=\Lambda(\theta,[1,2])$ with respect to the Euclidean metric and the orbifold metric $\sigma_{\MO_f}(z)|dz|$ are uniformly bounded\footnote{If $z_0\in W_1$ is a ramified point (i.e., $\nu(z_0)\geqslant 2$), then $\sigma_{\MO_f}(z)$ is comparable to $1/|z-z_0|^\kappa$ in a small neighborhood of $z_0$ for some $\kappa\in(0,1)$ and a simple smooth curve has finite orbifold length in a small neighborhood of $z_0$ (see \cite[\S 19]{Mil06}).}.

In view of that all attracting and parabolic basins of $f$ are bounded in $\C$, we consider the length $l_\C(\cdot)$ with respect to the Euclidean metric at some places.
The proof of the following result is inspired by \cite[Lemma 5.5]{WYZZ22}. The difference here is that besides the ray segments \eqref{defi-ray-seg-attr} in attracting basins, we also need to consider the ray segments in parabolic basins \eqref{defi-ray-seg-para}.

\begin{lem}\label{lem:essential-bd}
For any $\varepsilon>0$, there exists $C=C(\varepsilon)>0$ such that for any non-degenerate ray segment $R_{\theta,n}$ with $\theta\in \T$ and $n\geqslant 1$, there exists a continuous curve $\widetilde{R}_n$ which is homotopic to $R_{\theta,n}$ in $W_1$ relative to their end points,  such that $\widetilde{R}_n$ is the union of at most three continuous curves $\widetilde{R}_n^{ini}$, $\widetilde{R}_n^{ess}$ and $\widetilde{R}_n^{end}$ satisfying
\begin{equation}
\diam_{\C}(\widetilde{R}_n^{ini})<\varepsilon, \quad
l_{\C}(\widetilde{R}_n^{ess})< C \text{\quad and\quad} \diam_{\C}(\widetilde{R}_n^{end})<\varepsilon.
\end{equation}
\end{lem}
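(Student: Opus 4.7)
Plan: The proof combines three contraction properties established earlier in this section---the orbifold-metric expansion of Lemma~\ref{lem:expand-orbifold} away from the set $\MD := \bigcup_{r=1}^{r_0}\overline{\Delta}_r \cup \{\zeta_1,\ldots,\zeta_{t_0'}\}$; the Siegel-boundary shrinking of Lemma~\ref{main-lem-WYZZ}; and the parabolic shrinking of Lemma~\ref{lem:parabolic-rat}. The idea is to decompose $R_{\theta,n}=\eta_1(\theta)\cup\cdots\cup\eta_n(\theta)$ (using the construction $\Lambda$ in the parabolic case and the radial parametrization in the super-attracting case) and to choose a first-entry index $k_*(\theta,n)$ at which the chain first meets a small neighborhood of some component of $\MD$. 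The sub-chain before $k_*$ becomes the essential middle piece $\widetilde{R}_n^{ess}$, whose Euclidean length is uniformly controlled by the orbifold contraction; the tail from $k_*$ onward is replaced by a short curve $\widetilde{R}_n^{end}$ of small Euclidean diameter using the shrinking lemmas; and $\widetilde{R}_n^{ini}$ is treated symmetrically at the starting endpoint.

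Given $\varepsilon>0$, I would first fix $\delta_1=\delta_1(\varepsilon)>0$ so small that the $\delta_1$-neighborhoods of the components of $\MD$ are pairwise disjoint and each has Euclidean diameter less than $\varepsilon/2$; any pullback sequence of a Jordan domain inside such a neighborhood (Siegel case) or of a thin repelling sector at some $\zeta_{t'}$ (parabolic case) has Euclidean diameter less than $\varepsilon/4$ after a uniform number of iterations, by Lemmas~\ref{main-lem-WYZZ} and~\ref{lem:parabolic-rat}; and Lemma~\ref{lem:expand-orbifold} provides an orbifold expansion factor $\mu=\mu(\delta_1)>1$ for $f$ on $\{d_{\Sie+\Par}>\delta_1\}$. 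Because $W_1$ is a bounded hyperbolic domain, $\sigma_{\MO_f}\geq c_0>0$ on $W_1$, so every rectifiable $\gamma\subset W_1$ satisfies $l_{\C}(\gamma)\leq c_0^{-1}\ell_{\MO_f}(\gamma)$. By the smoothness of the parametrization on $\T\times[0,1]$ and its super-attracting analogue, there is a uniform bound $C_0$ on the orbifold length of the base pieces $\eta_1(\theta')$ for all $\theta'\in\T$.

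Let $k_*\in\{1,\ldots,n+1\}$ be the smallest $k$ for which $\eta_k(\theta)$ meets the $\delta_1$-neighborhood of $\MD$ (with $k_*=n+1$ if none does). For $1\le k<k_*$, every forward iterate $f^{\circ j}(\eta_k(\theta))=\eta_{k-j}(\theta_j)$ with $0\le j\le k-1$ is an earlier piece of the chain and therefore also avoids the $\delta_1$-neighborhood, so Lemma~\ref{lem:expand-orbifold} applied iteratively yields $\ell_{\MO_f}(\eta_k)\leq C_0\mu^{-(k-1)}$. Summing the geometric series, $\widetilde{R}_n^{ess}:=\bigcup_{1\le k<k_*}\eta_k(\theta)$ satisfies $l_{\C}(\widetilde{R}_n^{ess})\leq C:=c_0^{-1}C_0/(1-\mu^{-1})$. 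When $k_*\leq n$, the tail $\bigcup_{k\ge k_*}\eta_k(\theta)$ is contained in an inverse-branch pullback chain of a single Jordan domain (Siegel case) or thin repelling sector (parabolic case) covering the first-entry point, so Lemma~\ref{main-lem-WYZZ} or Lemma~\ref{lem:parabolic-rat} confines the tail to a set of Euclidean diameter less than $\varepsilon/2$. I would then replace the tail by a simple arc $\widetilde{R}_n^{end}$ of Euclidean diameter less than $\varepsilon$, homotopic to the tail in $W_1$ rel endpoints inside the ambient Fatou component. The initial piece $\widetilde{R}_n^{ini}$ is handled symmetrically, and taken as a single point when the starting endpoint of $R_{\theta,n}$ lies at distance greater than $\delta_1$ from $\MD$.

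The main technical obstacle is verifying that the post-entry tail is genuinely contained in one inverse-branch pullback chain to which Lemma~\ref{main-lem-WYZZ} or Lemma~\ref{lem:parabolic-rat} applies with a $\theta$-uniform iteration bound. Concretely, once $\eta_{k_*}(\theta)$ enters the $\delta_1$-neighborhood of some component $D_0\subset\MD$, the angle-shift dynamics $\theta\mapsto\theta_1$ together with the nesting $\overline{Q^{(k)}}\subset Q^{(k+1)}$ (or its super-attracting analogue) must force each subsequent $\eta_{k_*+m}(\theta)$ into the $m$-th univalent preimage component appearing in the hypotheses of those lemmas; this requires a careful case analysis based on the Fatou-theoretic address of $D_0$. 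A secondary bookkeeping issue is confirming the homotopy rel endpoints in $W_1$, which follows from the simple connectivity of these pullback domains intersected with their ambient Fatou components.
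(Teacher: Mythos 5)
Your proposal has a genuine gap at its central step. You set $k_*$ to be the smallest index for which the piece $\eta_{k}(\theta)$ of the fixed chain $R_{\theta,n}$ meets a $\delta_1$-neighborhood of $\MD$, and then you assert that for $k<k_*$ the forward iterates $f^{\circ j}(\eta_k(\theta))=\eta_{k-j}(\theta_j)$, $0\le j\le k-1$, ``are earlier pieces of the chain and therefore also avoid the $\delta_1$-neighborhood.'' This is false. The pieces of $R_{\theta,n}$ are $\eta_1(\theta),\ldots,\eta_n(\theta)$, all with the \emph{same} angle $\theta$, whereas $\eta_{k-j}(\theta_j)$ carries the shifted angle $\theta_j$ determined by the dynamics ($\theta\mapsto u\theta$ in the super-attracting case, and the analogous shift induced by $f(\Lambda(\theta,r+1))=\Lambda(\theta_1,r)$ in the parabolic case). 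Unless $\theta_j=\theta$, the curve $\eta_{k-j}(\theta_j)$ is not a piece of $R_{\theta,n}$ at all. Consequently, knowing that the pieces $\eta_1(\theta),\ldots,\eta_{k_*-1}(\theta)$ stay away from $\MD$ tells you nothing about where $\eta_{k-1}(\theta_1),\eta_{k-2}(\theta_2),\ldots$ lie, and Lemma~\ref{lem:expand-orbifold} cannot be iterated along the forward orbit of $\eta_k(\theta)$ to produce the bound $\ell_{\MO_f}(\eta_k)\leq C_0\mu^{-(k-1)}$. The geometric-series estimate for $\widetilde{R}_n^{ess}$ collapses. Your secondary obstacle --- that the post-entry tail need not stay in one inverse-branch pullback chain --- is also real (the curve can leave and re-enter the $\delta_1$-neighborhood, and moreover the ambient component changes from step to step), but it is the ``ess'' part that breaks first.

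The paper's proof sidesteps exactly this difficulty by an \emph{inductive deformation} rather than a one-shot first-entry decomposition. After constructing $\widetilde{R}_n=\widetilde{R}_n^{ini}\cup\widetilde{R}_n^{ess}\cup\widetilde{R}_n^{end}$ with the essential piece outside $\Theta$, they pull it back by $g_n$ and attach the new base piece $\widetilde{R}_1$; the resulting curve $\gamma_{n+1}$ may well wander into $\Theta$, so they then re-homotope it (the explicit segment replacements $[z_0,z_*]$, $[z^*,z_n]$ in Cases (1)--(6)) to obtain a new essential part $\widetilde{R}_{n+1}^{ess}$ disjoint from $\Theta$, at the modest cost of a uniform additive constant $C_1$. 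The orbifold contraction, applied at each step to a curve that is \emph{known} at that step to avoid $\Theta$, together with the fixed-point choice $C'=(C_0'+C_1)/(1-\mu)$, closes the induction. This re-deformation at every step is precisely the mechanism that handles the angle shift and the re-entry of the pullback into the dangerous region; your proposal has no analogue of it, so the uniform length bound is not established.
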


\begin{proof}
We divide the arguments into several steps.

\medskip
\textbf{Step 1} (Quasiconformal coordinate)
Note that $f$ has exactly $t_0$ immediate parabolic basins and the attracting petals $\{Q_t:1\leqslant t\leqslant t_0\}$ can be classified into $t_0'$ parts: $\{Q_{t'(i)}:1\leqslant i\leqslant p_{t'}\}$, where $1\leqslant t'\leqslant t_0'$ and each $Q_{t'(i)}$ attaches at the parabolic fixed point $\zeta_{t'}$.
It is known that the closures of the Siegel disks $\{\overline{\Delta}_r:1\leqslant r\leqslant r_0\}$ of $f$ are pairwise disjoint closed quasi-disks.
By the choice of the attracting petals $Q_t$ (see Corollary \ref{cor:petal}(b)), there exists a quasiconformal mapping $\Psi:\EC\to\EC$ such that the restriction
\begin{equation}\label{equ:Psi-restrict}
\Psi: \EC\setminus \Big(\bigcup_{r=1}^{r_0}\overline{\Delta}_r \cup \bigcup_{t=1}^{t_0}\overline{Q}_t \Big)
\to \EC\setminus \Big(\bigcup_{r=1}^{r_0}\overline{D}_r \cup \bigcup_{t'=1}^{t_0'}\overline{S}_{t'} \Big)
\end{equation}
is conformal, where
\begin{itemize}
\item $D_1=\EC\setminus\overline{\D}(0,R')$ for $R'>0$, $D_r=\D(a_r,\delta_r)$ for $a_r\in\C$ and $\delta_r>0$ with $2\leqslant r\leqslant r_0$;
\item $S_{t'}=\{z\in\C: 0<|z-b_{t'}|^{p_{t'}}<\delta' \text{ and } 0<\arg (z-b_{t'})^{p_{t'}}<\pi\}$ for $b_{t'}\in\C$ and $0<\delta'<1$ with $1\leqslant t'\leqslant t_0'$;
\item $\overline{D}_r$ with $1\leqslant r\leqslant r_0$ and $\overline{S}_{t'}$ with $1\leqslant t'\leqslant t_0'$ are pairwise disjoint.
\end{itemize}
Let $F:=\Psi\circ f\circ \Psi^{-1}:\EC\to\EC$. Denote
\begin{equation}
\MP(F):=\Psi(\MP(f)) \text{\quad and\quad} \MP_1(F):=\Psi(\MP_1(f)),
\end{equation}
where $\MP_1(f)$ is defined in \eqref{equ:MP-1}. The map $F$ is regarded as the quasiconformal model of $f$ and it has the advantage that serves a nice geometry near the boundaries of some domains.
In the following, for convenience, sometimes we don't distinguish the notations in the dynamical plane of $f$ and that of $F$. For example, we use $R_{\theta,n}$ (not $\Psi(R_{\theta,n})$) to denote the ray segment of $F$. It suffices to prove this lemma in the dynamical plane of $F$ when $\widetilde{R}_n^{ess}$ is disjoint with an open neighborhood (depending only on the given small $\varepsilon$) of $\bigcup_{r=1}^{r_0}\overline{D}_r \cup \bigcup_{t'=1}^{t_0'}\overline{S}_{t'}$.

\medskip
Let $W$ and $W_1$ be domains defined in \eqref{equ:W-W1}.
We assume that the corresponding domains $W$ and $W_1$ in the dynamical plane of $F$ have the following form:
\begin{equation}\label{equ:W-new}
W=\D(0,R')\setminus\Big(\MP(F)\cup\bigcup_{r=2}^{r_0}\overline{D}_r \cup \bigcup_{s=1}^{s_0}\overline{B}_s \cup \bigcup_{t'=1}^{t_0'}\overline{S}_{t'}\Big) \text{\quad and\quad} W_1=W\cup\MP_1(F),
\end{equation}
where each $B_s$ is a Jordan disk with smooth boundary satisfying $F(\overline{B}_s)\subset B_s$.
By \eqref{equ:Psi-restrict}, the restriction of $\Psi$ in the original $W_1$ is conformal. Hence the original orbifold $\MO_f=(W_1,\nu)$ induces a hyperbolic orbifold by conformal isomorphism $\Psi$ and we use a similar notation $\MO_F=(W_1,\nu)$, where $\sigma_{\MO_F}(z) |dz|$ is the corresponding orbifold metric.

\medskip
Let $\varepsilon\in(0,\delta'/2)$ be small. Then $\big\{\D(b_{t'},\varepsilon/2)\setminus \overline{S}_{t'}:1\leqslant t'\leqslant t_0'\big\}$ consists of $t_0$ components which are sectors. We denote them by
\begin{equation}
\{Z_t:1\leqslant t\leqslant t_0\}.
\end{equation}
For $1\leqslant r\leqslant r_0$, we define
\begin{equation}
Y_r:=\big\{z\in W: \dist_{\C}(z,\partial D_r)<\varepsilon/2\big\}.
\end{equation}
Decreasing $\varepsilon>0$ if necessary, we assume that
\begin{itemize}
\item $\Psi^{-1}(Z_t)$ is contained in a thin repelling sector of $f$ which is disjoint with $\MP(f)$, where $1\leqslant t\leqslant t_0$; and
\item $\overline{Y}_r$ with $1\leqslant r\leqslant r_0$ and $\overline{S}_{t'}$ with $1\leqslant t'\leqslant t_0'$ are pairwise disjoint.
\end{itemize}
For two continuous curves $\gamma_1$ and $\gamma_2$ in $W$ with the same end points\footnote{For convenience, we say that a continuous curve $\gamma:[0,1]\to\EC$ is contained in $W$ if $\gamma\big((0,1)\big)$ is.} (the end points are allowed in $W_1$), we say that $\gamma_1$ is \textit{homotopic} to $\gamma_2$ in $W$ relative to their end points and denote
\begin{equation}
\gamma_1\simeq \gamma_2 \text{\quad in~~} W.
\end{equation}

\medskip
\textbf{Step 2} (Inductive assumptions in parabolic basins)
We prove this lemma by induction. Without loss of generality, we only consider the case when the ray segments are contained in the closure of an \textit{immediate parabolic basin} $P_t$. The proof for the attracting basins is simpler and completely the same.
As mentioned just before this lemma, we may assume that $\varepsilon>0$ is small enough such that
\begin{itemize}
\item The length $l_\sigma \big(R_{\theta,1}\big)$ of the ray segment $R_{\theta,1}=\Lambda(\theta,[1,2])$ (may be degenerate, i.e., a singleton) with respect to the orbifold metric $\sigma_{\MO_F}(z) |dz|$ is uniformly bounded above (with respect to $\theta$) by a constant $C_0>0$; and
\item $R_{\theta,1}\cap Y_r=\emptyset$ for all $\theta\in\T$ and $1\leqslant r\leqslant r_0$.
\end{itemize}
Then there exists $C_0'\geqslant C_0>0$ such that for any $z_0:=\Lambda(\theta,1)$ and $z_1:=\Lambda(\theta,2)$ with $\theta\in\T$, there exists a continuous curve $\widetilde{R}_1=\widetilde{R}_1(\theta)\subset W$ satisfying
\begin{itemize}
\item $\widetilde{R}_1\simeq R_{\theta,1}$ in $W$;
\item $\widetilde{R}_1$ is the union of $3$ continuous curves (may be degenerate) $\widetilde{R}_1^{ini}$, $\widetilde{R}_1^{ess}$ and $\widetilde{R}_1^{end}$;
\item $\widetilde{R}_1^{ess}\cap Y_r=\emptyset$ for all $1\leqslant r\leqslant r_0$, $\widetilde{R}_1^{ess}\cap Z_t=\emptyset$ for all $1\leqslant t\leqslant t_0$ and $l_{\sigma}(\widetilde{R}_1^{ess})<C_0'$.
\end{itemize}
In particular,
\begin{itemize}
\item If $z_0, z_1\notin Z_t$ for any $t$, then $\widetilde{R}_1$ can be decomposed as $\widetilde{R}_1^{ini}=\emptyset$, $\widetilde{R}_1^{ess}=\widetilde{R}_1$ and $\widetilde{R}_1^{end}=\emptyset$;

\item If $z_0\in Z_t$ for some $t$ and $z_1\not\in Z_t$ for any $t$, then $\widetilde{R}_1$ can be decomposed as $\diam_{\C}(\widetilde{R}_1^{ini})<\varepsilon$ and $\widetilde{R}_1^{end}=\emptyset$;

\item If $z_0\notin Z_t$ for any $t$ and $z_1\in Z_t$ for some $t$, then $\widetilde{R}_1$ can be decomposed as $\widetilde{R}_1^{ini}=\emptyset$ and $\diam_{\C}(\widetilde{R}_1^{end})<\varepsilon$;

\item If $z_0\in Z_t$, $z_1\in Z_{t'}$ for some $t,t'$, then $\widetilde{R}_1$ can be decomposed as $\diam_{\C}(\widetilde{R}_1^{ini})<\varepsilon$ and $\diam_{\C}(\widetilde{R}_1^{end})<\varepsilon$.
\end{itemize}
Moreover, in all the above cases we have
\begin{equation}\label{c0}
\diam_{\C}(\widetilde{R}_1^{ini})<\varepsilon,\quad l_{\sigma}(\widetilde{R}_1^{ess})< C_0' \text{\quad and\quad}\diam_{\C}(\widetilde{R}_1^{end})<\varepsilon.
\end{equation}

Suppose there exists $n\geqslant 1$ such that
\begin{itemize}
  \item For any non-degenerate ray segment $R_{\theta,n}$ with $\theta\in \T$, there exists a continuous curve $\widetilde{R}_n\subset W$ such that $\widetilde{R}_n \simeq R_{\theta,n}$ in $W$, and that $\widetilde{R}_n$ can be written as the union of three continuous curves (may be degenerate) $\widetilde{R}_n^{ini}$, $\widetilde{R}_n^{ess}$ and $\widetilde{R}_n^{end}$;
  \item $\widetilde{R}_n^{ess}\cap Y_r=\emptyset$ for all $1\leqslant r\leqslant r_0$, $\widetilde{R}_n^{ess}\cap Z_t= \emptyset$ for all $1\leqslant t\leqslant t_0$, and $l_{\sigma}(\widetilde{R}_n^{ess})< C'$, where $C'=C'(\varepsilon)>0$ will be specified later; and
  \item $\diam_{\C}(\widetilde{R}_n^{ini})<\varepsilon$, $\diam_{\C}(\widetilde{R}_n^{end})<\varepsilon$.
\end{itemize}

\medskip
\textbf{Step 3} (Deformation of curves) Let $g_n$ be the inverse branch of $F$ which maps $R_{\theta_n,n}$ to $\Lambda(\theta_{n+1},[2,n+2])$, where $\theta_n,\theta_{n+1}\in\T$. Then we have
\begin{equation}
R_{\theta_{n+1},n+1}=R_{\theta_{n+1},1}\cup g_n(R_{\theta_n,n}).
\end{equation}
Note that $g_n$ can be extended analytically to any continuous curve in $W$ which is homotopic to $R_{\theta_n,n}$.
Therefore, for the continuous curve $\widetilde{R}_n$ obtained in the inductive assumption which satisfies $\widetilde{R}_n=\widetilde{R}_n^{ini}\cup\widetilde{R}_n^{ess}\cup\widetilde{R}_n^{end} \simeq R_{\theta_n,n}$ in $W$, $g_n(\widetilde{R}_n)$ is well-defined.
Let $\widetilde{R}_1^{ini}$, $\widetilde{R}_1^{ess}$ and $\widetilde{R}_1^{end}$ be the curves (may be degenerate) corresponding to $R_{\theta_{n+1},1}$ in Step 2.
Then
\begin{equation}
\gamma_{n+1}:=\widetilde{R}_1^{ini}\cup \widetilde{R}_1^{ess} \cup \widetilde{R}_1^{end} \cup g_n(\widetilde{R}_n^{ini}\cup\widetilde{R}_n^{ess}\cup\widetilde{R}_n^{end}) \simeq R_{\theta_{n+1},n+1} \text{\quad in } W.
\end{equation}
In the following we deform $\gamma_{n+1}$ in $W$ such that the inductive assumptions hold for the step $n+1$. See Figure \ref{Fig-homot}.

\begin{figure}[!htpb]
  \setlength{\unitlength}{1mm}
  \centering
  \includegraphics[width=0.8\textwidth]{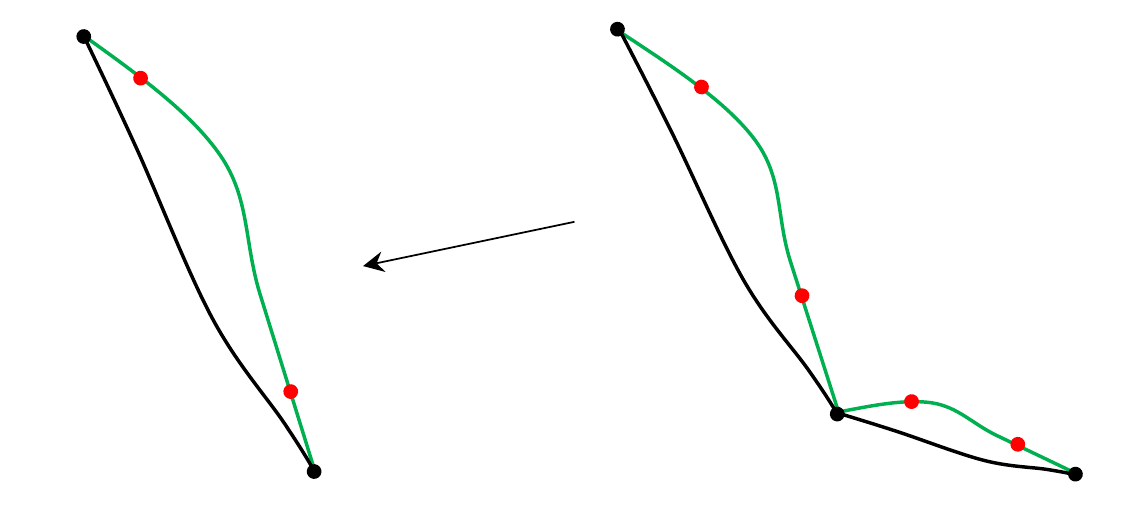}
  \put(-104,23){$R_{\theta_n,n}$}
  \put(-81.3,7.5){$\widetilde{R}_n^{ini}$}
  \put(-87.5,28.5){$\widetilde{R}_n^{ess}$}
  \put(-102.5,47.5){\small{$\widetilde{R}_n^{end}$}}
  \put(-68,29){$F$}
  \put(-55,23){$g_n(R_{\theta_n,n})$}
  \put(-30.3,18.5){\small{$g_n(\widetilde{R}_n^{ini})$}}
  \put(-34,33){$g_n(\widetilde{R}_n^{ess})$}
  \put(-46,47){\small{$g_n(\widetilde{R}_n^{end})$}}
  \put(-56,48){$z_n$}
  \put(-33.5,9){$z_1$}
  \put(-3.5,3.5){$z_0$}
  \put(-26,4){$R_{\theta_{n+1},1}$}
  \put(-8,7.5){$\widetilde{R}_1^{ini}$}
  \put(-17.5,11.5){$\widetilde{R}_1^{ess}$}
  \put(-28.5,12.5){\small{$\widetilde{R}_1^{end}$}}
  \caption{The mapping relation of some marked curves in the dynamical plane of $F=\Psi\circ f\circ \Psi^{-1}$. }
  \label{Fig-homot}
\end{figure}

Let $z_0=\Lambda(\theta_{n+1},1)$ and $z_n=\Lambda(\theta_{n+1},n+2)$ be the two end points of $\gamma_{n+1}$. Denote $z_1=\Lambda(\theta_{n+1},2)$. Without loss of generality, we assume that $z_0$ and $z_1$ are not on the boundary of the immediate parabolic basin $P_t$. Otherwise, the inductive assumptions hold immediately.
By the inductive assumptions in the step $n$ and Lemma \ref{lem:expand-orbifold}, there exists $0<\mu=\mu(\varepsilon)<1$ which is independent of $n$ and a small number $\varepsilon'>0$ depending only on $\varepsilon$ (where $\varepsilon'\to 0$ as $\varepsilon\to 0$) such that
\begin{equation}\label{equ:l-W-mu}
l_{\sigma}\big(g_n(\widetilde{R}_n^{ess})\big)< \mu C',
\end{equation}
and
\begin{equation}\label{equ:end-bd}
\diam_{\C}\big(g_n(\widetilde{R}_n^{ini})\big)<\varepsilon' \text{\quad and\quad} \diam_{\C}\big(g_n(\widetilde{R}_n^{end})\big)<\varepsilon'.
\end{equation}

Denote $\Theta:=\bigcup_{r=1}^{r_0}Y_r \cup \bigcup_{t=1}^{t_0} Z_t$. We have the following $6$ cases:
\begin{itemize}
\item[(1)] $z_0\in Z_t$ for some $1\leqslant t\leqslant t_0$, and $z_n\in Y_r$ for some $1\leqslant r\leqslant r_0$;
\item[(2)] $z_0\in Z_t$ for some $1\leqslant t\leqslant t_0$, and $z_n\in Z_{\hat{t}}$ for some $1\leqslant \hat{t}\leqslant t_0$;
\item[(3)] $z_0\in Z_t$ for some $1\leqslant t\leqslant t_0$, and $z_n\not\in \Theta$;
\item[(4)] $z_0\not\in \Theta$, and $z_n\in Y_r$ for some $1\leqslant r\leqslant r_0$;
\item[(5)] $z_0\not\in \Theta$, and $z_n\in Z_{t}$ for some $1\leqslant t\leqslant t_0$; or
\item[(6)] $z_0\not\in \Theta$, and $z_n\not\in \Theta$.
\end{itemize}

Suppose Case (1) holds.
Let $z_*$ be the middle point of the arc $\partial Z_t\setminus \partial S_{t'}$, and $z^*$ be the point on $\partial Y_r\setminus \partial D_r$ such that the segment $[z^*, z_n]$ is perpendicular to $\partial Y_r$ (see Figure \ref{Fig-homo-seg}).
By \eqref{c0}, \eqref{equ:l-W-mu} and \eqref{equ:end-bd}, there exist a number $C_1=C_1(\varepsilon)>0$ and a continuous curve $\widetilde{\gamma}_{n+1}$ such that $\widetilde{\gamma}_{n+1}\simeq \gamma_{n+1}$ in $W$, and $\widetilde{\gamma}_{n+1}=[z_0,z_*]\cup\gamma_{n+1}'\cup[z^*,z_n]$, where $\gamma_{n+1}'$ is a continuous curve in $W\setminus\Theta$ with end points $z_*$ and $z^*$ satisfying
\begin{equation}
l_{\sigma}\big(\gamma_{n+1}'\big)\leqslant l_{\sigma}\big(g_n(\widetilde{R}_n^{ess})\big)+l_{\sigma}(\widetilde{R}_1^{ess})+C_1<\mu C'+C_0'+C_1.
\end{equation}
In this case we define $\widetilde{R}_{n+1}:=\widetilde{R}_{n+1}^{ini}\cup\widetilde{R}_{n+1}^{ess}\cup \widetilde{R}_{n+1}^{end}$, where
\begin{equation}
\widetilde{R}_{n+1}^{ini}:=[z_0,z_*],  \quad
\widetilde{R}_{n+1}^{ess}:=\gamma_{n+1}' \text{\quad and\quad} \widetilde{R}_{n+1}^{end}:=[z^*,z_n].
\end{equation}
Then the induction at the step $n+1$ is finished in this case by setting $C':=(C_0'+C_1)/(1-\mu)$.

\begin{figure}[!htpb]
  \setlength{\unitlength}{1mm}
  \centering
  \includegraphics[width=0.9\textwidth]{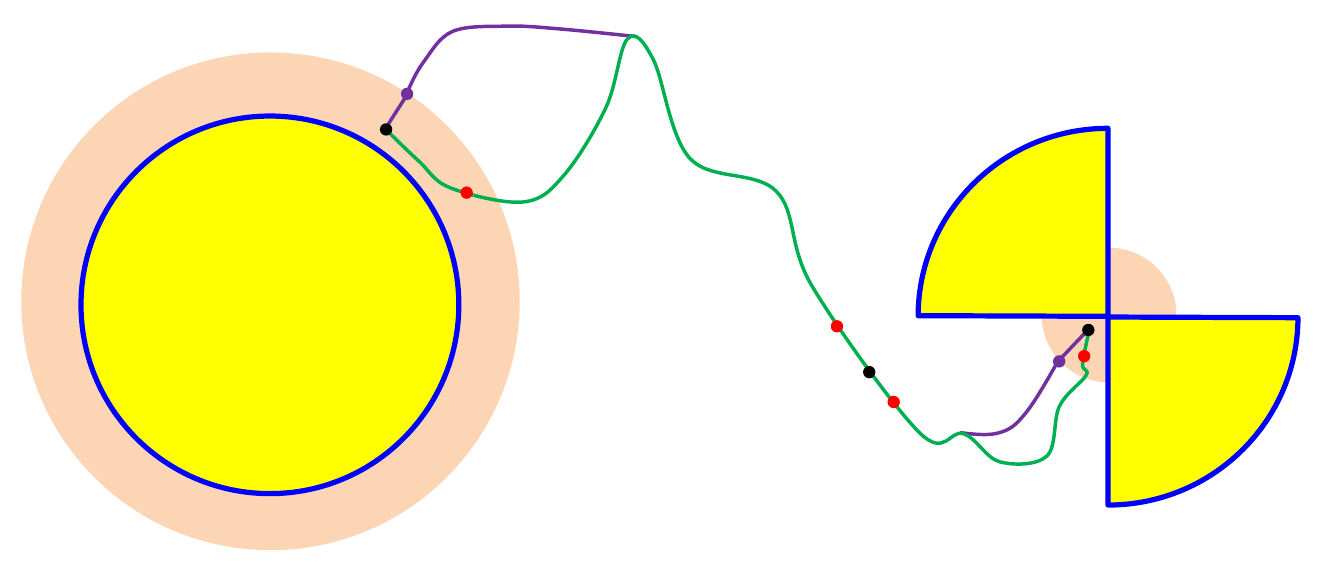}
  \put(-104,23){$D_r$}
  \put(-118,40){$Y_r$}
  \put(-95,43.5){$z_n$}
  \put(-87.5,45){$z^*$}
  \put(-48.7,17.8){$z_1$}
  \put(-29,21.5){\small{$Z_t$}}
  \put(-26.5,17){$z_*$}
  \put(-25,26){$z_0$}
  \put(-20,26){$b_{t'}$}
  \put(-8,7.5){$S_{t'}$}
  \caption{A homotopic deformation of the curve $\gamma_{n+1}$ (colored green). }
  \label{Fig-homo-seg}
\end{figure}

By a similar argument as Case (1), one can complete the induction at the step $n+1$ for Cases (2)-(6) similarly. The only difference is that one may need to define  $\widetilde{R}_{n+1}^{ini}$ or $\widetilde{R}_{n+1}^{end}$ as an empty set.

\medskip
\textbf{Step 4} (The conclusion)
To sum up, we have proved that for any $\theta\in \T$ and any $n\geqslant 1$, there exists a continuous curve $\widetilde{R}_n$ such that $\widetilde{R}_n\simeq R_{\theta,n}$ in $W$, where $\widetilde{R}_n$ consists of $3$ continuous curves (may be degenerate) $\widetilde{R}_n^{ini}$, $\widetilde{R}_n^{ess}$ and $\widetilde{R}_n^{end}$ in $W$ satisfying $\widetilde{R}_n^{ess}\cap \Theta= \emptyset$ and
\begin{equation}
\diam_{\C}(\widetilde{R}_n^{ini})<\varepsilon,\quad
l_{\sigma}(\widetilde{R}_n^{ess})< C' \text{\quad and\quad} \diam_{\C}(\widetilde{R}_n^{end})<\varepsilon.
\end{equation}
Note that there exists a number $\kappa>0$ such that the orbifold metric $\sigma_{\MO_F}(z)|dz|$ satisfies $\sigma_{\MO_F}(z)>\kappa>0$ for all $z\in W$. Hence we have $l_{\C}(\widetilde{R}_n^{ess})<C:=C'/\kappa$. The proof is complete.
\end{proof}

\begin{proof}[Proof of Proposition \ref{prop:attracting}]
We only consider the immediate parabolic basin since the proof for the attracting basin is completely similar. In fact, the argument is inspired by \cite[Proposition 5.2]{WYZZ22}.
Let $\Lambda$ be defined in \eqref{equ:Lambda}. Note that in the closure of an immediate parabolic basin $P_t$, each $\partial Q^{(n)}$ has a parameterization:
\begin{equation}
\Gamma_n(\theta):=\Lambda(\theta,n), \quad \text{where }\theta\in\T.
\end{equation}
Every ray segment $R_{\theta',n}$ connects $\Gamma_1(\theta')$ with $\Gamma_{n+1}(\theta')$, where $\theta'\in \T$ and $n\geqslant 1$.

For any given $\varepsilon>0$, let $C=C(\varepsilon)>0$ and $\widetilde{R}_n=\widetilde{R}_n^{ini}\cup\widetilde{R}_n^{ess}\cup\widetilde{R}_n^{end}\simeq R_{\theta',n}$ in $W$ be obtained in Lemma \ref{lem:essential-bd}. For any integer $m\geqslant 1$, the map
\begin{equation}
f^{\circ (m-1)}: \Lambda\big(\theta,[m,n+m]\big)\to R_{\theta',n}
\end{equation}
is a homeomorphism, where $\Lambda\big(\theta,[m,n+m]\big)$ is a continuous curve connecting $\Gamma_m(\theta)$ with $\Gamma_{n+m}(\theta)$ in $W$ (see \eqref{defi-ray-seg-para}). We use $\widetilde{R}_{n,m} $ to denote the pull back of $\widetilde{R}_n$ under $f^{\circ (m-1)}$ such that $\widetilde{R}_{n,m}\simeq \Lambda(\theta,[m,n+m])$ in $W$.

Let $\delta_0>0$ be the constant introduced in Lemma \ref{lem:contrac}. Since the Euclidean length of $\widetilde{R}_n^{ess}$ is less than $C=C(\varepsilon)$, it follows that $\widetilde{R}_n^{ess}$ can be divided into $M=M(\varepsilon)>2$ subcurves, such that each of them has spherical length less than $\delta_0/2$. Then each subcurve is contained in a Jordan domain $U_i\subset W_1$ with $\diam_{\EC}(U_i)<\delta_0$, where $1\leqslant i\leqslant M$. By Lemma \ref{lem:essential-bd}, without loss of generality, we assume that $l_{\EC}(\widetilde{R}_n^{ini})<\varepsilon<\delta_0/2$ and $l_{\EC}(\widetilde{R}_n^{end})<\varepsilon<\delta_0/2$.

By Lemma \ref{lem:contrac}, there exits $N\geqslant 1$ such that for all $m\geqslant N$, the spherical diameter of every component of $f^{-(m-1)}(U_i)$ and $f^{-(m-1)}(\widetilde{R}_n^{end})$ is less than $\varepsilon/(2M)$. Hence
\begin{equation}
\dist_{\EC}(\Gamma_m(\theta), \Gamma_{n+m}(\theta))\leqslant \diam_{\EC}(\widetilde{R}_{n,m} ) <(M+2)\cdot \frac{\varepsilon}{2M}<\varepsilon.
\end{equation}
Thus $\{\Gamma_n(\theta):\theta\in\T\}_{n\geqslant 1}$ forms a Cauchy sequence and it converges uniformly to a continuous curve $\Gamma(\theta): \T\to \overline{P}_t$. In particular, $\partial P_t=\Gamma(\T)$ is locally connected.
\end{proof}

\begin{rmk}
If $f$ has no Siegel disk, Proposition \ref{prop:attracting} gives another proof of the local connectivity of the Julia sets for some special parabolic rational maps, in particular, for geometrically finite rational maps.
\end{rmk}

\subsection{Proof of Theorem \ref{thm:local-connectivity}}

Based on Lemma \ref{lem:contrac}, Proposition \ref{prop:attracting} and the local connectivity criterion (Lemma \ref{lem:LC-criterion}), the proof of Theorem \ref{thm:local-connectivity} is now almost immediate (See also \cite[\S 5.2]{WYZZ22}). For completeness, we include a proof here.

\begin{proof}[Proof of Theorem \ref{thm:local-connectivity}]
Let $f$ be a rational map in Theorem \ref{thm:local-connectivity}. Iterating $f$ if necessary, we assume that every periodic Fatou component of $f$ has period one. According to \cite{Zha11} and Proposition \ref{prop:attracting}, we conclude that the boundaries of all Fatou components of $f$ are locally connected. By Lemma \ref{lem:LC-criterion}, it suffices to prove that the spherical diameter of the Fatou components of $f$ tends to zero.

Let $U_0$ be a fixed Fatou component of $f$. If $U_0$ is completely invariant ($U_0$ is either an immediate attracting or parabolic basin), by Proposition \ref{prop:attracting}, $J(f)=\partial U_0$ is locally connected. Suppose $U_0$ is not completely invariant ($U_0$ can be attracting, parabolic or Siegel). Then $f^{-1}(U_0)\setminus U_0$ consists of at least one but at most finitely Fatou components. We denote them by $\{U_1^{(1)},\cdots, U_1^{(i_1)}\}$ for some $i_1\geqslant 1$. There exists $k_0\geqslant 1$ such that for any $1\leqslant i\leqslant i_1$, any connected component $U_{k_0+1}^{(j)}$ of $f^{-k_0}(U_1^{(i)})$ is disjoint with $\MP(f)$. Thus for any integer $k\geqslant 1$, the map $f^{\circ k}:U_{k+k_0+1}^{(l)}\to U_{k_0+1}^{(j)}$ is conformal, where $U_{k+k_0+1}^{(l)}$ is any connected component of $f^{-k}(U_{k_0+1}^{(j)})$.

Let $\delta_0>0$ be the constant in Lemma \ref{lem:contrac}. There exists a uniform constant $M\geqslant 1$ such that any $U_{k_0+1}^{(j)}$ is covered by $M$ Jordan domains $\{D_m: 1\leqslant m\leqslant M\}$ in $W_1$ whose spherical diameters are less than $\delta_0$. For any given $\varepsilon>0$, by Lemma \ref{lem:contrac}, there exists $N\geqslant 1$ such that for all $k\geqslant N$, the spherical diameter of any connected component of $f^{-k}(D_m)$ is less than $\varepsilon/M$. Thus for all $k\geqslant N$, we have
\begin{equation}
\diam_{\EC}\big(U_{k+k_0+1}^{(l)}\big)<\varepsilon \text{\quad for all }l.
\end{equation}
Since $f$ has only finitely many periodic Fatou components, there are only finitely many Fatou components of $f$ whose spherical diameters are greater than $\varepsilon$.
\end{proof}

\section{Combinatorics of Siegels and parabolics}\label{sec:defi}

In this section, for the quadratic polynomials having a fixed bounded type Siegel disk or a parabolic point, we assign an address for each bounded Fatou component. Such combinatoric information will be used to study the ray equivalence later.

\subsection{Addresses of preimages of the Siegel disk}\label{f_theta}

Let $0<\theta<1$ be a bounded type irrational number. According to Douady-Herman (see \cite{Dou87}, \cite{Her87}), the quadratic polynomial
\begin{equation}
f_{\theta}:z\mapsto e^{2\pi\ii \theta}z+z^2
\end{equation}
has a Siegel disk $\Delta_\theta$ centered at $0$ whose boundary is quasi-circle containing the critical point $- e^{2\pi\ii \theta}/2$.
Petersen proved that the Julia set $J(f_\theta)$ of $f_\theta$ is locally connected and has zero Lebesgue measure \cite{Pet96} (see also \cite{Yam99}).
Note that every bounded Fatou component of $f_\theta$ is a Jordan disk, which is a preimage of $\Delta_\theta$. In this subsection, we assign addresses for these bounded Fatou components and some points on $J(f_\theta)$ by following \cite{Pet96} and \cite{YZ01}. A slight difference is that we don't use the Blaschke model where the unit disk corresponds to the Siegel disk.

\begin{defi}[Drops, centers and roots]
We call the Siegel disk $U_0:=\Delta_\theta$ the $0$-\emph{drop}\footnote{The name ``drop'' is very vivid when one looks the preimages of the unit disk under the Blaschke model of $f_\theta$ (see \cite{Pet96}). For convenience we still use this name here.} of $f_{\theta}$. Let $U_1:=f^{-1}_{\theta}(\Delta_\theta)\setminus\Delta_\theta$ be the unique $1$-drop, which is a Jordan domain attaching at the critical point $x_1:=- e^{2\pi\ii \theta}/2$. More generally, for any $n\geqslant 1$, each connected component $U$ of $f_{\theta}^{-(n-1)}(U_1)$ is a Jordan domain called an $n$-\emph{drop}, with $n$ the \textit{depth} of $U$. The map $f_{\theta}^{\circ n}:U\to \Delta_\theta$ is conformal.
The unique point $z=z(U)\in U$ satisfying $f_{\theta}^{\circ n}(z)=0$ is called the \emph{center} of $U$.
The unique point $x(U):=f_{\theta}^{-(n-1)}(x_1)\cap \partial U$ is called the \emph{root} of $U$.
\end{defi}

Let $U$ and $V$ be two different drops of depths $m$ and $n$, respectively. Then $\overline{U}\cap \overline{V}$ is either empty or a singleton. In the later case we necessarily have $m\neq n$. Suppose $m<n$. Then $\overline{U}\cap \overline{V}=x(V)$, which is a preimage of the critical point $x_1$. We call that $U$ is the \textit{parent} of $V$ and $V$ is a \textit{child} of $U$. In particular, every $n$-drop with $n\geqslant 1$ has a unique parent which is an $m$-drop with $0\leqslant m<n$. The root of this $n$-drop belongs to the boundary of its parent and the roots determine children uniquely.

Now we give a symbolic description to every drop by assigning an address. For $n\geqslant 1$, let $x_n= f_{\theta}^{-(n-1)}(x_1)\cap \partial\Delta_\theta$ and $U_n$ be the $n$-drop with root $x_n$. Consider any multi-index $\iota= \iota_1 \iota_2 \cdots \iota_k$ of length $k$, where each $\iota_j$ is a positive integer. We define the $(\iota_1 +\iota_2 +\cdots +\iota_k)$-drop $U_{\iota_1 \iota_2 \cdots \iota_k}$ of generation $k$ with root
\begin{equation}
x(U_{\iota_1 \iota_2 \cdots \iota_k})=x_{\iota_1 \iota_2 \cdots \iota_k}
\end{equation}
as follows. For $k=1$, the definition is done. For the induction step, suppose we have defined $x_{\iota_1 \iota_2 \cdots \iota_{k-1}}$ for all multi-indices $\iota_1 \iota_2 \cdots \iota_{k-1}$ of length $k-1$. Based on this we define
\begin{equation}\label{roots}
x_{\iota_1 \iota_2 \cdots \iota_k}:=
\left\{
\begin{array}{ll}
f_{\theta}^{-1}(x_{ \iota_2 \cdots \iota_k})\cap \partial U_{\iota_1 \iota_2 \cdots \iota_{k-1}}  & \text{if } \iota_1=1, \\
f_{\theta}^{-1}(x_{ (\iota_1 -1)\iota_2 \cdots \iota_k})\cap \partial U_{\iota_1 \iota_2 \cdots \iota_{k-1}} & \text{if } \iota_1>1.
\end{array}
\right.
\end{equation}
The first line of \eqref{roots} defines all the roots of the form $x_{1\iota_2 \cdots \iota_{k}}$ and the second line defines all the roots $x_{\iota_1 \iota_2 \cdots \iota_k}$ by induction on $\iota_1$.  The corresponding drops $U_{\iota_1 \iota_2 \cdots \iota_k}$ are determined by $x_{\iota_1\iota_2 \cdots \iota_{k}}$ and uniqueness.
Then all drops are given addresses and
\begin{equation}\label{roots-1}
f_{\theta}(U_{\iota_1 \iota_2 \cdots \iota_k})=
\left\{
\begin{array}{ll}
U_{ \iota_2 \cdots \iota_k}  & \text{if } \iota_1=1, \\
U_{ (\iota_1 -1)\iota_2 \cdots \iota_k} & \text{if } \iota_1>1.
\end{array}
\right.
\end{equation}

\begin{defi}[Limbs]
For any given multi-index $\iota_1 \iota_2 \cdots \iota_k$, define the \emph{limb} $L_{\iota_1 \iota_2 \cdots \iota_k}$ as the closure of the union of the drop $U_{\iota_1 \iota_2 \cdots \iota_k}$ and all its descendants (i.e., children and grandchildren, etc.):
\begin{equation}
L_{\iota_1 \iota_2 \cdots \iota_k}:=\overline{\bigcup U_{\iota_1 \iota_2 \cdots \iota_k\cdots}}\,.
\end{equation}
Note that the biggest limb $L_0=K(f_\theta)$ is the filled Julia set of $f_\theta$.
If $\iota_1 \iota_2 \cdots \iota_k\not= 0$, we call $x_{\iota_1 \iota_2 \cdots \iota_k}$ the root of $L_{\iota_1 \iota_2 \cdots \iota_k}$.
\end{defi}

According to \cite{Pet96}, the critical point $x_1$ of $f_{\theta}$ is the landing point of exactly two external rays in the basin of the infinity. Moreover, as the depth of a limb of $f_{\theta}$ goes to infinity, the corresponding Euclidean diameter goes to $0$. This implies that the Julia set of $f_{\theta}$ is locally connected and the filled Julia set $ K(f_{\theta})$ is the union of $\overline{\Delta}_\theta$ and all the limbs of generation $1$, i.e.,
$K(f_{\theta})=\overline{\Delta}_\theta\cup \bigcup_{n\geqslant 1} L_{n}$.

Consider a sequence of drops $\{U_0=\Delta_\theta, U_{\iota_1}$, $U_{\iota_1 \iota_2}$, $\cdots\}$, where each $U_{\iota_1 \iota_2 \cdots \iota_k} $  is the parent of $U_{\iota_1 \iota_2 \cdots \iota_{k+1}}$. The closure of the union of this sequence
\begin{equation}
\MC:=\overline{\bigcup\limits_k U_{\iota_1 \iota_2 \cdots \iota_k}}
\end{equation}
is a \emph{drop-chain}.
The intersection of the corresponding limbs $L_0\supset L_{\iota_1}\supset L_{\iota_1\iota_2}\supset\cdots$ must be a singleton which we denote by
\begin{equation}
\xi(\MC):=\bigcap\limits_k L_{\iota_1 \iota_2 \cdots \iota_k}.
\end{equation}
Note that we have
$\MC= \bigcup_k \overline{U}_{\iota_1 \iota_2 \cdots \iota_k}\cup \{\xi(\MC)\}$.

\begin{defi}[Drop-rays]
By a \emph{ray} in a drop $U$ we mean a hyperbolic geodesic which connects a boundary point $z_0\in \partial U$ to the center $z(U)$ of $U$.
We denote this ray by $\llbracket z_0, z(U) \rrbracket=\llbracket z(U),z_0 \rrbracket$.
For two different points $z_1,z_2\in\partial U$, we denote
\begin{equation}\label{equ:ray-2-pt}
\llbracket z_1,z_2 \rrbracket:=\llbracket z_1,z(U) \rrbracket \cup \llbracket z(U),z_2 \rrbracket.
\end{equation}
For any drop-chain $\MC=\overline{\bigcup_k U_{\iota_1 \iota_2 \cdots \iota_k}}$, the following path $R=R(\MC)$ is the ``most efficient" path in $\MC$ which connects $0$ to $\xi(\MC)$:
\begin{equation}
R(\MC):=\llbracket 0, x_{\iota_1} \rrbracket\cup \bigcup\limits_{k\geqslant 1}\llbracket x_{\iota_1\cdots\iota_k}, x_{\iota_1\cdots\iota_k\iota_{k+1}} \rrbracket\cup \{\xi(\MC)\}.
\end{equation}
We call $R(\MC)$ the \emph{drop-ray} associated with $\MC$, and say that $R(\MC)$ and $\MC$ \emph{land} at $\xi(\MC)$.
\end{defi}

According to \cite{Pet96} and the above definitions,
every point in $K(f_{\theta})$ either belongs to the closure of a drop or is the landing point of a unique drop-chain, and the assignment $\MC\mapsto \xi(\MC)$ is one-to-one, i.e., different drop-rays land at distinct points.

\begin{defi}[Spines]
Consider the two drop-chains
\begin{equation}
\MC_\theta:=\overline{U_0\cup U_1\cup U_{11}\cup \cdots}\,, \quad \MC_\theta':=\overline{U_0\cup U_2\cup U_{21}\cup \cdots}
\end{equation}
with $f_{\theta}(\MC_\theta')=\MC_\theta$. By definition, we see that $\MC_\theta$ and $\MC_\theta'$  land at the repelling fixed point $\xi(\MC_\theta)= \beta_\theta$ and $\xi(\MC_\theta')=\beta_\theta'$ respectively, where $f_\theta(\beta_\theta')=\beta_\theta$. The \emph{spine} of $f_{\theta}$ is defined as the union of the drop-rays
\begin{equation}\label{equ:spine-theta}
S_\theta:=R(\MC_\theta)\cup R(\MC_\theta').
\end{equation}
Notice that every point on $S_\theta$ which is not in the interior of $K(f_{\theta})$ is either one of the ends points $\beta_\theta$, $\beta_\theta'$, or a preimage of the critical point $x_1$.
\end{defi}

Since the Julia set $J(f_{\theta})$ is locally connected, there exists a \textit{Carath\'{e}odory loop}
\begin{equation}\label{equ:eta-theta}
\eta_{\theta}:\T\to J(f_{\theta})
\end{equation}
which conjugates the angle-doubling map to $f_{\theta}$ (see the definitions in \S\ref{subsec:mating-defi}). A point $z\in J(f_{\theta})$ is the landing point of an external ray with angle $t$ if and only if $\eta_{\theta}(t)=z$. There are two special points $\eta_{\theta}(0)=\beta_\theta$ and  $\eta_{\theta}(1/2)=\beta_\theta'$.
We define two connected subsets of the Julia set:
\begin{equation}\label{equ:J-0-1}
\begin{split}
J^0_{\theta}:=&\{z\in J(f_{\theta}):z=\eta_{\theta}(t)\ \textrm{for some}\ 0\leqslant t \leqslant 1/2 \}, \\
J^1_{\theta}:=&\{z\in J(f_{\theta}):z=\eta_{\theta}(t)\ \textrm{for some}\ 1/2\leqslant t \leqslant 1 \}.
\end{split}
\end{equation}

\begin{defi}[Itinerary]
For any $z\in J(f_{\theta})$, we define the \emph{itinerary} of $z$ with respect to the spine $S_{\theta}$ as: $\varepsilon=(\varepsilon_0,\varepsilon_1, \varepsilon_2,\cdots)$, where $\varepsilon_i\in\{0,1\}$ is determined by
\begin{equation}
z_i\in J^{\varepsilon_i}_{\theta}, \text{ where } z_0=z \text{ and } z_{i+1}=f_{\theta}(z_i), i\geqslant 0.
\end{equation}
If $z$ is a preimage of the repelling fixed point $\beta_\theta$ or is biaccessible, we assign two different itineraries to $z$.
\end{defi}

There are following three cases:

\medskip
\textbf{Case 1}. The orbit of $z$ never hits the spine $S_\theta$. Then there exists a unique angle $t$ with $z=\eta_\theta(t)$ and $t$ has the binary expansion $0.\varepsilon_0\varepsilon_1\varepsilon_2\cdots$.

\medskip
\textbf{Case 2}. The orbit of $z$ eventually hits the fixed point $\beta_\theta$.  The two itineraries of $\beta_\theta$ are given by:
    \begin{equation}
        \varepsilon(\beta_\theta)=(0,0,0,0,\cdots) \text{\quad and\quad}
        \varepsilon'(\beta_\theta)=(1,1,1,1,\cdots).
    \end{equation}
     Similarly, the two itineraries of $\beta_\theta'$ (recall that $f_\theta^{-1}(\beta_\theta)=\{\beta_\theta,\beta_\theta'\}$) are given by:
    \begin{equation}
        \varepsilon(\beta_\theta')=(1,0,0,0,\cdots) \text{\quad and\quad}
        \varepsilon'(\beta_\theta')=(0,1,1,1,\cdots).
    \end{equation}
    The two itineraries of $z$ with $z_n=\beta_\theta$ (where $n\geqslant 2$ is the smallest integer satisfying this property) will have the form
        \begin{equation}
        \varepsilon=(\varepsilon_0,\varepsilon_1,\cdots, \varepsilon_{n-2}, 1,0,0,0,\cdots)  \text{\quad and\quad}
        \varepsilon'=(\varepsilon_0,\varepsilon_1, \cdots, \varepsilon_{n-2}, 0,1,1,1,\cdots).
    \end{equation}
In this case, there exists a unique angle $t$ with $z=\eta_\theta(t)$ and $t$ has two binary expansions $0.\varepsilon_0\varepsilon_1\cdots\varepsilon_{n-2}1000\cdots$ and $0.\varepsilon_0\varepsilon_1\cdots\varepsilon_{n-2}0111\cdots$.

\medskip
\textbf{Case 3}. The orbit of $z$ eventually hits the critical point $x_1$. For such points, there are exactly two angles $0<t<s<1$ with $\eta_{\theta}(t)=\eta_{\theta}(s)=z$. Note that $f_\theta(x_1)$ is the critical value having a unique external ray landing on it with the angle of binary expansion $\omega=0.\omega_1\omega_2\cdots$. Hence if $z=x_{11\cdots1}\in f^{-(k-1)}_{\theta}(x_1)$ with $k\geqslant 1$, the two itineraries of $z$ are
    \begin{equation}
        \varepsilon=(\underbrace{0,0,\cdots,0}_{k \ \textrm{times}}, \omega_1,\omega_2,\cdots)  \text{\quad and\quad}
        \varepsilon'=(\underbrace{1,1,\cdots,1}_{k \ \textrm{times}}, \omega_1,\omega_2,\cdots).
    \end{equation}
    If $z=x_{21\cdots1}\in f^{-k}_{\theta}(x_1)$ with $k\geqslant 1$, the two itineraries of $z$ are
    \begin{equation}
        \varepsilon=(0,\underbrace{1,\cdots,1}_{k \ \textrm{times}}, \omega_1,\omega_2,\cdots)  \text{\quad and\quad}
        \varepsilon'=(1,\underbrace{0,\cdots,0}_{k \ \textrm{times}}, \omega_1,\omega_2,\cdots).
    \end{equation}
Moreover, if $z$ is off the spine whose orbit eventually hits the critical point $x_1$, the itineraries $\varepsilon$ and $\varepsilon'$ can be defined similarly based on those of $x_{11\cdots1}$ and $x_{21\cdots1}$, and these itineraries give the binary expansions of the two angles $t$ and $s$ respectively.

\medskip
By the above definitions, we have the following characterizations of the points on the Julia set $J(f_\theta)$ by itineraries and angles of external rays (see \cite[\S 7.1]{YZ01}):

\begin{prop}\label{prop:YZ-Siegel}
The angle(s) of the external ray(s) landing at any given $z\in J(f_\theta)$ is (are) determined by the itinerary(ies) of $z$, that is, two points in $J(f_\theta)$ having the same itinerary must coincide. Moreover, every infinite sequence of $0$'s and $1$'s can be realized as the itinerary of a unique point in $J(f_{\theta})$.
\end{prop}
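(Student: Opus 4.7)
The plan is to prove the proposition by using the Carath\'eodory loop $\eta_\theta:\T\to J(f_\theta)$ from \eqref{equ:eta-theta} as a symbolic encoder. Since $\eta_\theta$ semiconjugates the angle-doubling map $d(t)=2t\Mod 1$ to $f_\theta$, the itinerary of a point $z=\eta_\theta(t)$ should read off the binary expansion of $t$, provided that the sets $J^0_\theta$ and $J^1_\theta$ in \eqref{equ:J-0-1} correspond cleanly to the closed arcs $[0,1/2]$ and $[1/2,1]$ of $\T$ under $\eta_\theta$.

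The first step is to verify that $J^0_\theta=\eta_\theta([0,1/2])$ and $J^1_\theta=\eta_\theta([1/2,1])$, so the spine $S_\theta$ corresponds (up to biaccessible overlaps) to the endpoint set $\{0,1/2\}$: indeed this is immediate from the definition \eqref{equ:J-0-1}. Note that $\beta_\theta=\eta_\theta(0)$, $\beta_\theta'=\eta_\theta(1/2)$, while the preimages of $x_1$ lying on $S_\theta$ are biaccessible and correspond exactly to dyadic rational angles, i.e., those admitting two binary expansions. With this set-up, if $z=\eta_\theta(t)$ is a point whose forward orbit never meets $S_\theta$, then $f_\theta^{\circ i}(z)=\eta_\theta(d^{\circ i}(t))$ falls unambiguously in $J^{\varepsilon_i}_\theta$ where $\varepsilon_i$ is the $(i+1)$-th binary digit of $t$. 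Hence the itinerary $\varepsilon=(\varepsilon_0,\varepsilon_1,\ldots)$ coincides with the binary expansion $t=0.\varepsilon_0\varepsilon_1\varepsilon_2\cdots$, so $t$ (and thus $z$) is uniquely recovered from $\varepsilon$. Conversely, given any $\varepsilon\in\{0,1\}^\N$, set $t:=\sum_{i\geqslant 0}\varepsilon_i 2^{-i-1}$ and $z:=\eta_\theta(t)$; then $z$ realizes the given itinerary.

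The main obstacle is to handle Cases 2 and 3 preceding the proposition, where the orbit of $z$ hits $\beta_\theta$ or a critical preimage $x_{1\cdots 1}$ or $x_{21\cdots 1}$: in Case 2 the angle $t$ is dyadic and has two binary expansions, while in Case 3 the point $z$ is biaccessible, so there are two angles $t<s$ landing at $z$. In both situations one must check that the two itineraries explicitly assigned to $z$ match the two binary expansions of $t$ (Case 2) or the binary expansions of $t$ and $s$ (Case 3). For Case 2 this reduces to the identity $0.\varepsilon_0\cdots\varepsilon_{n-2}1000\cdots=0.\varepsilon_0\cdots\varepsilon_{n-2}0111\cdots$ applied to the sequence that arrives at $\beta_\theta$ under $d^{\circ n}$, combined with $f_\theta(\beta_\theta')=\beta_\theta$ and $0,1/2\in\eta_\theta^{-1}(S_\theta)$. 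For Case 3, using \eqref{roots}--\eqref{roots-1} one tracks how the two itineraries assigned at $x_{11\cdots 1}$ or $x_{21\cdots 1}$ push forward to the critical value and then agree from the digit $\omega_1$ onward with the unique binary expansion $\omega=0.\omega_1\omega_2\cdots$ of the angle landing at $f_\theta(x_1)$. Once these symbolic bookkeeping checks are done, the one-to-one correspondence between itineraries and external angles forces any two points with a common itinerary to be the landing point of a common external ray, and hence to coincide, which also yields the existence part for sequences lying in the special cases.
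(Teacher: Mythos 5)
Your proposal is correct and essentially reproduces the argument from \cite[\S 7.1]{YZ01}, which the paper cites for this proposition: the three cases preceding the statement already identify the itinerary(ies) of $z$ with the binary expansion(s) of the external angle(s) of $z$, and the proposition follows because $\eta_\theta$ sends each angle to a unique point. One small imprecision worth flagging: the claim that $J^0_\theta\cap J^1_\theta$ consists exactly of the Julia points on the spine $S_\theta$ is not ``immediate from the definition \eqref{equ:J-0-1}''---it relies on the separation of the sphere by $S_\theta$ together with the external rays at angles $0$ and $1/2$, which in turn uses local connectivity of $J(f_\theta)$ and the fact (from \cite{Pet96}) that the biaccessible Julia points are precisely the iterated preimages of the critical point $x_1$.
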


\subsection{Addresses of parabolic Fatou components}\label{f_nu}

In this subsection, we give some similar definitions (drops, limbs, spines, itineraries etc.) as before for the quadratic parabolic polynomial:
\begin{equation}
f_\nu: z\mapsto e^{2\pi\ii \nu}z+z^2, \text{\quad where } \nu=q/p\in\Q.
\end{equation}
The combinatorial information is complicated than the Siegel case since some point may have two preimages on the boundary of the same Fatou component.
For the special case $\nu=0$, it is well known that the Julia set of $f_0(z)=z+z^2$ is a Jordan curve.
In the rest of this subsection, we assume that $\nu=q/p\in (0,1)$, where $p\geqslant 2$ and $q\geqslant 1$ are coprime integers.
According to Douady-Hubbard \cite[Chap.\,10]{DH85a} (see also \cite{TY96}), the Julia set of $f_\nu$ is connected and locally connected, and each bounded Fatou component is a Jordan domain.
As the Siegel case, we assign addresses for the bounded Fatou components of $f_\nu$ and some points on $J(f_\nu)$.

\medskip
Note that there are exactly $p\geqslant 2$ bounded Fatou components attaching at the parabolic fixed point $y_0:=0$, which are exactly $p$ immediate parabolic basins of $f_\nu$.
Let $U_0^{(0)}$ be the one containing the critical point $c_\nu=- e^{2\pi\ii \nu}/2$. We use $U_0^{(1)}$, $U_0^{(2)}$, $\cdots$, $U_0^{(p-1)}$ to denote the remaining $p-1$ ones which satisfy the following relationship:
\begin{equation}
f_{\nu}(U_0^{(i)})=U_0^{(i-1)}, \text{\quad where } 1\leqslant i\leqslant p \text{ and } U_0^{(p)}:=U_0^{(0)}.
\end{equation}
In fact, $\nu=q/p$ is the ``combinatoric" rotation number of these $p$ immediate parabolic basins. They surround $0$ with an order which is uniquely determined by $\nu$. For example, if $\nu=3/5$ (i.e., $q=3$ and $p=5$), then the immediate parabolic basins surround $0$ as $U_0^{(0)}$, $U_0^{(3)}$, $U_0^{(1)}$, $U_0^{(4)}$, $U_0^{(2)}$ in counterclockwise. See Figure \ref{Fig_parabolic-zoom}.

\begin{defi}[Drops, centers and roots]
We call $U_0^{(0)}$, $U_0^{(1)}$, $\cdots$, $U_0^{(p-1)}$ the $0$-\emph{drops} of $f_{\nu}$.
Denote $\MU_0:=\bigcup_{i=0}^{p-1} U_0^{(i)}$. The connected components of $f_{\nu}^{-1}(\MU_0)\setminus\MU_0$ are called $1$-drops, which consist of $p-1$ Jordan domains attaching at the other preimage $y_1\neq y_0$ of $y_0$. We denote them by $U_1^{(1)}$, $U_1^{(2)}$, $\cdots$, $U_1^{(p-1)}$, where
\begin{equation}
f_{\nu}(U_1^{(i)})=U_0^{(i-1)} \text{\quad for } 1\leqslant i\leqslant p-1.
\end{equation}
More generally, for any $n\geqslant 1$ and $1\leqslant i\leqslant p-1$, each connected component $U$ of $f_{\nu}^{-(n-1)}(U_1^{(i)})$ is a Jordan domain called an $n$-\emph{drop}, with $n$ the \textit{depth} of $U$. The maps
\begin{equation}
f_{\nu}^{\circ (n-1)}:U\to U_1^{(i)} \text{\quad and\quad} f_{\nu}^{\circ i}:U_1^{(i)}\to U_0^{(0)}
\end{equation}
are conformal.
The unique point $z=z(U)\in U$ satisfying $f_{\nu}^{\circ (n-1+i)}(z)=c_\nu$ is the \emph{center} of $U$.
The unique point $y(U):=f_{\nu}^{-(n-1+i)}(y_0)\cap \partial U$ is the \emph{root} of $U$.
\end{defi}

Let $U$ and $V$ be two different drops of depths $m$ and $n$, respectively. Then $\overline{U}\cap \overline{V}$ is either empty or a singleton. In the later case we have the following cases:
\begin{itemize}
    \item $m=n$ and $U$, $V$ have the same root $y(U)=y(V)$;
    \item $m\neq n$ and $\overline{U}\cap \overline{V}=y(V)$ (we assume that $m<n$ without loss of generality).
\end{itemize}
In the second case, we call that $U$ is the \textit{parent} of $V$ and $V$ is a \textit{child} of $U$. In particular, every $n$-drop with $n\geqslant 1$ has a unique parent which is an $m$-drop with $0\leqslant m<n$. The root of this $n$-drop belongs to the boundary of its parent and it determines exactly $p-1$ children.

\medskip
Now we give a symbolic description to every drop by assigning an address. To this end we first give addresses to all roots on the boundary of $\MU_0$.
Note that both $y_0=0$ and $y_1=f_\nu^{-1}(y_0)\setminus\{y_0\}$ are contained in $\partial U_0^{(0)}$. We denote
\begin{equation}
    y_i:=f_{\nu}^{-(i-1)}(y_1)\cap \partial U_0^{(i-1)}, \text{\quad where } 2\leqslant i\leqslant p.
\end{equation}
Since $f_{\nu}: U_0^{(0)}=U_0^{(p)}\to U_0^{(p-1)}$ is of degree $2$ and $U_0^{(0)}$, $U_0^{(p-1)}$ are Jordan domains, each point on $\partial U_0^{(p-1)}$ has exactly two preimages on $\partial U_0^{(0)}$ with the same depth. Indeed, $\partial U_0^{(0)}$ is the union of two orientated simple arcs $\partial_0 U_0^{(0)}$ and $\partial_1 U_0^{(0)}$ such that
\begin{itemize}
\item $\partial_0 U_0^{(0)}\cap\partial_1 U_0^{(0)}=\{y_0,y_1\}$, $\partial_0 U_0^{(0)}$ starts at $y_0$ and ends at $y_1$, $\partial_1 U_0^{(0)}$ starts at $y_1$ and ends at $y_0$;
\item $\partial_0 U_0^{(0)}\cup\partial_1 U_0^{(0)}$ is a Jordan curve having a counterclockwise direction; and
\item $f_\nu:\partial_m U_0^{(0)}\setminus\{y_0,y_1\}\to \partial U_0^{(p-1)}\setminus\{y_0\}$ is a homeomorphism for $m\in\{0,1\}$.
\end{itemize}
For the two preimages of $y_p\in\partial U_0^{(p-1)}$ on $\partial U_0^{(0)}$ under $f_\nu$, we denote them by $y_{p+1}^0\in \partial_0 U_0^{(0)}$ and $y_{p+1}^1\in \partial_1 U_0^{(0)}$. Hence the $4$ points $y_0$, $y_{p+1}^0$, $y_1$ and $y_{p+1}^1$ are located in counterclockwise order on $\partial U_0^{(0)}$.
For $m\in\{0,1\}$, we denote
\begin{equation}
y_{p+i}^m:=f_{\nu}^{-(i-1)}(y_{p+1}^m)\cap \partial U_0^{(i-1)}, \text{\quad where } 2\leqslant i\leqslant p.
\end{equation}
For $m\in\{0,1\}$, $y_{2p}^m$ has two preimages on $\partial U_0^{(0)}=\partial_0 U_0^{(0)}\cup\partial_1 U_0^{(0)}$ which we denote by
\begin{equation}
f_\nu^{-1}(y_{2p}^m)=\big\{y^{0m}_{2p+1}, y^{1m}_{2p+1}\big\},
\end{equation}
where $y^{0m}_{2p+1}\in \partial_{0} U_0^{(0)}$ and $y^{1m}_{2p+1}\in \partial_{1} U_0^{(0)}$. See Figure \ref{Fig_parabolic-zoom}.

\begin{figure}[!htpb]
  \setlength{\unitlength}{1mm}
  \setlength{\fboxsep}{0pt}
  \centering
  \fbox{\includegraphics[width=0.85\textwidth]{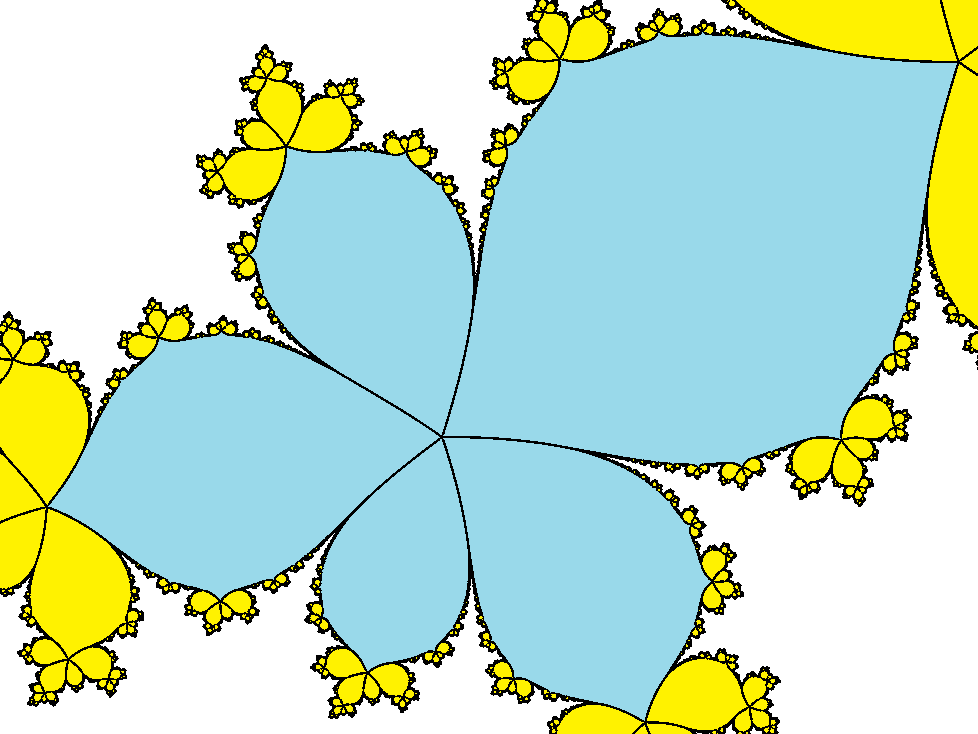}}
  \put(-64,39){$y_0=0$}
  \put(-7.5,80){$y_1$}
  \put(-112.5,28.5){$y_2$}
  \put(-44.5,4.8){$y_3$}
  \put(-85,69){$y_4$}
  \put(-76,10.5){$y_p$}
  \put(-24.5,39){$y_{p+1}^0$}
  \put(-52,78.8){$y_{p+1}^1$}
  \put(-34,36.5){$y_{2p+1}^{00}$}
  \put(-20,48){$y_{2p+1}^{01}$}
  \put(-42,82){$y_{2p+1}^{10}$}
  \put(-57.5,70.5){$y_{2p+1}^{11}$}
  \put(-102.5,44.2){$y_{p+2}^0$}
  \put(-98,20.3){$y_{p+2}^1$}
  \put(-56.5,9){$y_{p+3}^0$}
  \put(-41.5,19){$y_{p+3}^1$}
  \put(-76,68){$y_{p+4}^0$}
  \put(-88.5,58.5){$y_{p+4}^1$}
  \put(-80.2,15){$y_{2p}^0$}
  \put(-71.8,12.5){$y_{2p}^1$}
  \put(-42,58){$U_0^{(0)}=U_0^{(p)}$}
  \put(-73,22){$U_0^{(4)}$}
  \put(-77,55){$U_0^{(3)}$}
  \put(-52,21){$U_0^{(2)}$}
  \put(-94,32.5){$U_0^{(1)}$}
  \caption{A local view of the dynamical plane of $f_\nu$ near the immediate parabolic basins (colored cyan), where $\nu=3/5$ (i.e., $q=3$ and $p=5$). The parabolic fixed point $y_0=0$ and some of its preimages are marked.}
  \label{Fig_parabolic-zoom}
\end{figure}

\medskip
Let $\mm= m_1 m_2 \cdots m_\ell$ be any multi-index of length $\ell\geqslant 1$, where $m_j\in\{0,1\}$. If $\ell=0$, let $\mm:=\emptyset$ for convenience. We define the point
\begin{equation}
y_n^{\mm}=y_n^{m_1 m_2 \cdots m_\ell} \text{\quad with\quad} \ell p+1\leqslant n\leqslant (\ell+1)p
\end{equation}
inductively as follows. For $\ell=0,1$, the definitions are done. In particular, if $\ell=0$, we denote $y_n:=y_n^{\mm}$ for simplicity, where $1\leqslant n\leqslant p$. For the induction step, suppose we have defined $y_{n'}^{m_1 m_2 \cdots m_{\ell-1}}$ with $(\ell-1)p+1\leqslant n'\leqslant \ell p$ for all multi-indices $m_1 m_2 \cdots m_{\ell-1}$ of length $\ell-1$ with some $\ell\geqslant 2$. Based on this we define
\begin{equation}\label{equ:y-n}
y_{n}^{m_1 m_2 \cdots m_\ell}:=
\left\{
\begin{array}{ll}
f_{\nu}^{-1}(y_{n-1}^{m_2 \cdots m_\ell})\cap \partial_{m_1} U_0^{(0)}  & \text{if } n=\ell p+1, \\
f_{\nu}^{-1}(y_{n-1}^{m_1 m_2 \cdots m_\ell})\cap \partial U_0^{(i-1)} & \text{if } n=\ell p+i \text{ for } 2\leqslant i\leqslant p.
\end{array}
\right.
\end{equation}
The first line of \eqref{equ:y-n} defines all the roots of the form $y_{\ell p+1}^{m_1 m_2 \cdots m_\ell}$ and the second line defines all the roots $y_{\ell p+i}^{m_1 m_2 \cdots m_\ell}$ by induction on $i$.

\medskip
For $\ell\geqslant 0$ and $\ell p+1\leqslant n\leqslant (\ell+1)p$, let $\mm= m_1 m_2 \cdots m_\ell$ with $m_j\in\{0,1\}$ for all $1\leqslant j\leqslant \ell$ and $\{U_n^{\mm,(i)}:1\leqslant i\leqslant p-1\}$ be the $n$-drops with root $y_n^{\mm}$, where
\begin{equation}\label{equ:f-nu-1}
f_\nu^{\circ (n-1)}: U_n^{\mm,(i)}\to U_1^{(i)} \text{\quad and\quad}
f_\nu: U_1^{(i)}\to U_0^{(i-1)}
\end{equation}
are conformal. Note that if $\ell=0$, then $\mm=\emptyset$ and $y_n^{\mm}=y_n$. In this case we denote $U_n^{(i)}:=U_n^{\mm,(i)}$ for simplicity, where $1\leqslant n\leqslant p$ and $1\leqslant i\leqslant p-1$.

\begin{defi}[Admissible multi-indices]
For each $k\geqslant 1$, let $\Lambda_k$ be the collection of all pairs of multi-indices $\iota= \iota_1 \iota_2 \cdots \iota_k$ and $\mm=(\mm^1,\mm^2,\cdots,\mm^k)$ of length $k$ such that
\begin{itemize}
\item For each $1\leqslant s\leqslant k$,
\begin{equation}
\mm^s=m_1^s m_2^s \cdots m_{\ell_s}^s,
\end{equation}
where $m_j^s\in\{0,1\}$ for all $1\leqslant j\leqslant \ell_s$ and $\ell_s\geqslant 0$; and

\item $\ell_1 p+1\leqslant \iota_1\leqslant (\ell_1+1)p$ and $\ell_s p+1\leqslant \iota_s<(\ell_s+1)p$ for each $2\leqslant s\leqslant k$.
\end{itemize}
In particular, if $\ell_s=0$ for some $1\leqslant s\leqslant k$, then $\mm^s=\emptyset$, $1\leqslant \iota_1\leqslant p$ if $s=1$ and $1\leqslant \iota_s< p$ if $2\leqslant s\leqslant k$.
All pairs of $\iota$ and $\mm$ in $\Lambda_k$ are called \textit{admissible} multi-indices.
\end{defi}

For any pair of admissible multi-indices $\iota= \iota_1 \iota_2 \cdots \iota_k$ and $\mm=(\mm^1,\mm^2,\cdots,\mm^k)$ in $\Lambda_k$,
we define the $(\iota_1 +\iota_2 +\cdots +\iota_k)$-drops $\{U_{\iota_1 \iota_2 \cdots \iota_k}^{\mm^1,\mm^2,\cdots,\mm^k,(i_k)}:1\leqslant i_k\leqslant p-1\}$ of generation $k$ with root
\begin{equation}
y(U_{\iota_1 \iota_2 \cdots \iota_k}^{\mm^1,\mm^2,\cdots,\mm^k,(i_k)})=y_{\iota_1 \iota_2 \cdots \iota_k}^{\mm^1,\mm^2,\cdots,\mm^k}
\end{equation}
as follows. For $k=1$, the definition is done. For the induction step, suppose we have defined $y_{\iota_1 \iota_2 \cdots \iota_{k-1}}^{\mm^1,\mm^2,\cdots,\mm^{k-1}}$ for all pairs of admissible multi-indices $\iota_1 \iota_2 \cdots \iota_{k-1}$ and $(\mm^1,\mm^2,\cdots,\mm^{k-1})$ of length $k-1$ in $\Lambda_{k-1}$ with $k\geqslant 2$.
The corresponding drops $\{U_{\iota_1 \iota_2 \cdots \iota_{k-1}}^{\mm^1,\mm^2,\cdots,\mm^{k-1},(i_{k-1})}:1\leqslant i_{k-1}\leqslant p-1\}$ have the same root $y_{\iota_1 \iota_2 \cdots \iota_{k-1}}^{\mm^1,\mm^2,\cdots,\mm^{k-1}}$ such that the following map is conformal for each $1\leqslant i_{k-1}\leqslant p-1$:
\begin{equation}
f_\nu^{\circ (\iota_1 +\iota_2 +\cdots +\iota_{k-1})}: U_{\iota_1 \iota_2 \cdots \iota_{k-1}}^{\mm^1,\mm^2,\cdots,\mm^{k-1},(i_{k-1})}\to U_0^{(i_{k-1}-1)}.
\end{equation}

Based on this, for  any pair of admissible multi-indices $\iota= \iota_1 \iota_2 \cdots \iota_k$ and $\mm=(\mm^1,\mm^2,\cdots,\mm^k)$ in $\Lambda_k$, we have (note that in particular, $\ell_1 p+1\leqslant \iota_1\leqslant (\ell_1+1)p$ with $\ell_1\geqslant 0$ and $\mm^1=m_1^1 m_2^1 \cdots m_{\ell_1}^1$):
\begin{enumerate}
\item If $\iota_1=1$, then $\ell_1=0$, $\mm^1=\emptyset$ and we define
\begin{equation}
y_{\iota_1 \iota_2 \cdots \iota_k}^{\mm^1,\mm^2,\cdots,\mm^k}:=
f_{\nu}^{-1}\big(y_{\iota_2 \cdots \iota_k}^{\mm^2,\cdots,\mm^k}\big)\cap \partial U_{\iota_1 \iota_2 \cdots \iota_{k-1}}^{\mm^1,\mm^2,\cdots,\mm^{k-1},(i_{k-1})},
\end{equation}
where $\{1,2,\cdots, p-1\}\ni i_{k-1}\equiv\iota_k$ ($\Mod p$).

\item  If $\iota_1=\ell_1 p+i$ with $2\leqslant i\leqslant p$ and $\ell_1\geqslant 0$, we define
\begin{equation}
y_{\iota_1 \iota_2 \cdots \iota_k}^{\mm^1,\mm^2,\cdots,\mm^k}:=
f_{\nu}^{-1}\big(y_{(\iota_1-1) \iota_2 \cdots \iota_k}^{\mm^1,\mm^2,\cdots,\mm^k}\big)\cap \partial U_{\iota_1 \iota_2 \cdots \iota_{k-1}}^{\mm^1,\mm^2,\cdots,\mm^{k-1},(i_{k-1})},
\end{equation}
where $\{1,2,\cdots, p-1\}\ni i_{k-1}\equiv\iota_k$ ($\Mod p$).

\item  If $\iota_1=\ell_1 p+1$ with $\ell_1\geqslant 1$, we define
\begin{equation}
y_{\iota_1 \iota_2 \cdots \iota_k}^{\mm^1,\mm^2,\cdots,\mm^k}:=
f_{\nu}^{-1}\big(y_{(\iota_1-1) \iota_2 \cdots \iota_k}^{\widetilde{\mm}^1,\mm^2,\cdots,\mm^k}\big)\cap \partial U_{\iota_1 \iota_2 \cdots \iota_{k-1}}^{\mm^1,\mm^2,\cdots,\mm^{k-1},(i_{k-1})},
\end{equation}
where $\{1,2,\cdots, p-1\}\ni i_{k-1}\equiv\iota_k$ ($\Mod p$) and
\begin{equation}\label{equ:m-tilde}
\widetilde{\mm}^1=
\left\{
\begin{array}{ll}
\emptyset  & \text{if } \ell_1=1, \\
m_2^1\cdots m_{\ell_1}^1 & \text{if } \ell_1>1.
\end{array}
\right.
\end{equation}
\end{enumerate}
The above first case defines all the roots of the form $y_{1 \iota_2 \cdots \iota_k}^{\emptyset,\mm^2,\cdots,\mm^k}$ and the above latter two cases define all the roots $y_{\iota_1 \iota_2 \cdots \iota_k}^{\mm^1,\mm^2,\cdots,\mm^k}$ by induction on $\iota_1$.
The corresponding drops $\{U_{\iota_1 \iota_2 \cdots \iota_k}^{\mm^1,\mm^2,\cdots,\mm^k,(i_k)}:1\leqslant i_k\leqslant p-1\}$ have the same root $y_{\iota_1 \iota_2 \cdots \iota_k}^{\mm^1,\mm^2,\cdots,\mm^k}$ such that the following map is conformal:
\begin{equation}
f_\nu^{\circ (\iota_1 +\iota_2 +\cdots +\iota_k)}: U_{\iota_1 \iota_2 \cdots \iota_k}^{\mm^1,\mm^2,\cdots,\mm^k,(i_k)}\to U_{0}^{(i_k-1)}.
\end{equation}
Then all drops are given addresses and for each $k\geqslant 2$, or $k=1$ and $\iota_1>1$, we have
\begin{equation}\label{equ:roots-1-para}
f_{\nu}(U_{\iota_1 \iota_2 \cdots \iota_k}^{\mm^1,\mm^2,\cdots,\mm^k,(i_k)})=
\left\{
\begin{array}{ll}
U_{ \iota_2 \cdots \iota_k}^{\mm^2,\cdots,\mm^k,(i_k)}  & \text{if } \iota_1=1, \\
U_{ (\iota_1 -1)\iota_2 \cdots \iota_k}^{\mm^1,\mm^2,\cdots,\mm^k,(i_k)} & \text{if } \iota_1\neq\ell_1 p+1, \\
U_{ (\iota_1 -1)\iota_2 \cdots \iota_k}^{\widetilde{\mm}^1,\mm^2,\cdots,\mm^k,(i_k)} & \text{if } \iota_1=\ell_1 p+1 \text{ with }\ell_1\geqslant 1,
\end{array}
\right.
\end{equation}
where $\widetilde{\mm}^1$ is defined in \eqref{equ:m-tilde}.
If $k=1$ and $\iota_1=1$, we have
\begin{equation}
U_{1}^{\mm^1,(i_1)}=U_1^{(i_1)} \text{\quad and\quad} f_{\nu}(U_{1}^{\mm^1,(i_1)})=U_0^{(i_1-1)}.
\end{equation}
Moreover, if $\ell_s=0$ for all $1\leqslant s\leqslant k$, then $\mm^s=\emptyset$ for all $1\leqslant s\leqslant k$ and we denote
\begin{equation}
y_{\iota_1 \iota_2 \cdots \iota_k}:=y_{\iota_1 \iota_2 \cdots \iota_k}^{\mm^1,\mm^2,\cdots,\mm^k} \text{\quad and\quad}
U_{\iota_1 \iota_2 \cdots \iota_k}^{(i_k)}:=U_{\iota_1 \iota_2 \cdots \iota_k}^{\mm^1,\mm^2,\cdots,\mm^k,(i_k)}
\end{equation}
for simplicity.
See Figure \ref{Fig_parabolic}.

\begin{figure}[!htpb]
  \setlength{\unitlength}{1mm}
  \centering
  \includegraphics[width=0.85\textwidth]{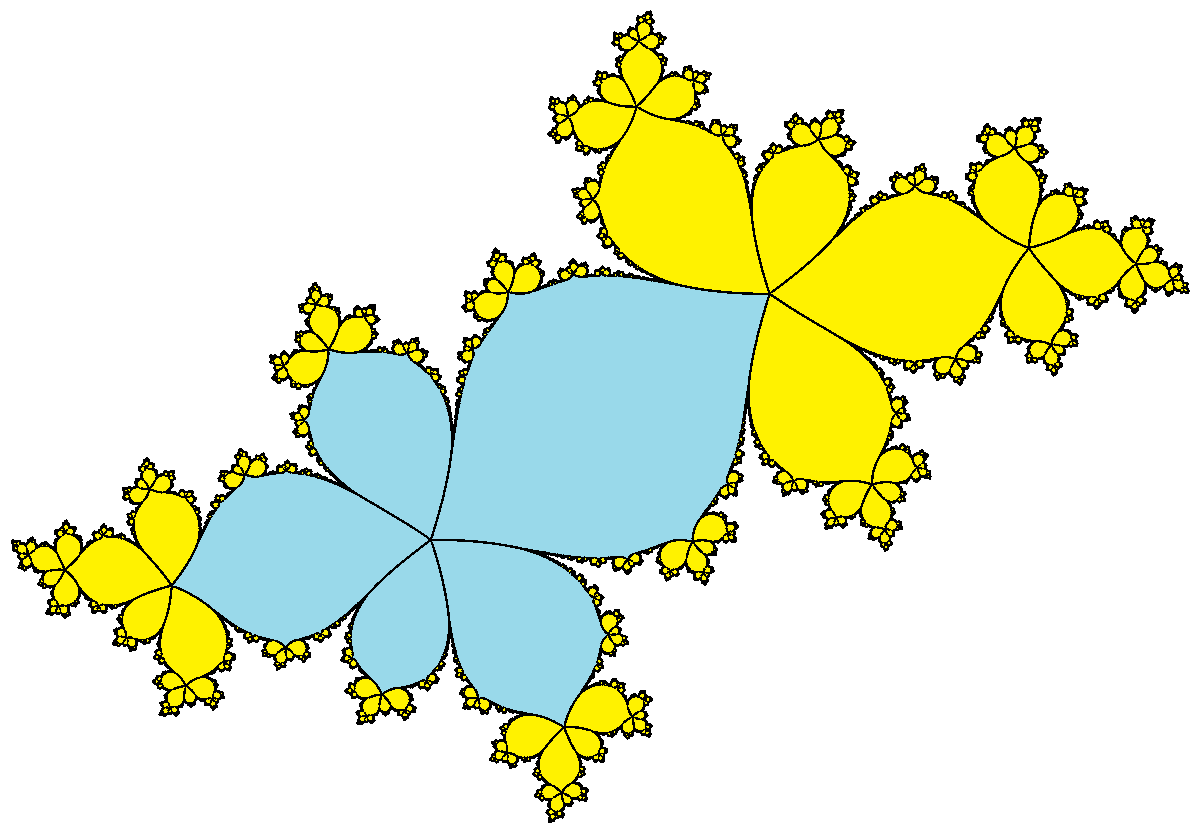}
  \put(-68,40){$U_0^{(0)}=U_0^{(p)}$}
  \put(-83.2,20){$U_0^{(4)}$}
  \put(-85,39){$U_0^{(3)}$}
  \put(-70,19){$U_0^{(2)}$}
  \put(-94,26.5){$U_0^{(1)}$}
  \put(-43.5,60){$U_1^{(4)}$}
  \put(-41,42){$U_1^{(3)}$}
  \put(-55.5,62){$U_1^{(2)}$}
  \put(-33,54.5){$U_1^{(1)}$}
  \put(-112,25){\footnotesize{$U_2^{(1)}$}}
  \put(-104.2,18){\footnotesize{$U_2^{(2)}$}}
  \put(-106.7,29){\footnotesize{$U_2^{(3)}$}}
  \put(-108.3,21){\tiny{$U_2^{(4)}$}}
  \put(-69.9,8.8){\tiny{$U_3^{(3)}$}}
  \put(-67.3,6){\tiny{$U_3^{(1)}$}}
  \put(-59.5,6.5){\tiny{$U_3^{(4)}$}}
  \put(-63,11.3){\tiny{$U_3^{(2)}$}}
  \put(-14.5,56.3){\footnotesize{$U_{11}^{(1)}$}}
  \put(-22.5,63){\footnotesize{$U_{11}^{(2)}$}}
  \put(-19.9,52){\footnotesize{$U_{11}^{(3)}$}}
  \put(-17.7,59.2){\rotatebox{55}{\tiny{$U_{11}^{(4)}$}}}
  \put(-56,73){\tiny{$U_{12}^{(3)}$}}
  \put(-59.3,76){\tiny{$U_{12}^{(1)}$}}
  \put(-65.5,77.5){\tiny{$U_{12}^{(4)}$}}
  \put(-63,70.5){\tiny{$U_{12}^{(2)}$}}
  \put(-57.5,22.5){\tiny{$U_{p+1}^{0,(3)}$}}
  \put(-50,22){\tiny{$U_{p+1}^{0,(1)}$}}
  \put(-47,26.5){\tiny{$U_{p+1}^{0,(4)}$}}
  \put(-46.5,31){\tiny{$U_{p+1}^{0,(2)}$}}
  \put(-70,60){\tiny{$U_{p+1}^{1,(3)}$}}
  \put(-77.8,60.5){\tiny{$U_{p+1}^{1,(1)}$}}
  \put(-80,56.2){\tiny{$U_{p+1}^{1,(4)}$}}
  \put(-82,51.5){\tiny{$U_{p+1}^{1,(2)}$}}
  \put(-75.5,31){$y_0$}
  \put(-48.5,51){$y_1$}
  \put(-101,25){$y_2$}
  \put(-67,13.5){$y_3$}
  \put(-87.5,45.5){$y_4$}
  \put(-83,16.5){$y_p$}
  \put(-24.1,57.5){$y_{11}$}
  \put(-57,69){$y_{12}$}
  \put(-38.5,37){$y_{13}$}
  \put(-42.2,66){$y_{14}$}
  \put(-58.5,31.5){$y_{p+1}^0$}
  \put(-70,50){$y_{p+1}^1$}
  \put(-33,50){\tiny{$y_{1(p+1)}^0$}}
  \put(-32,61.2){\tiny{$y_{1(p+1)}^1$}}
 \caption{A global view of the dynamical plane of $f_\nu$, where $\nu=3/5$ (i.e., $q=3$ and $p=5$). The $0$-drops, $1$-drops and some of their preimages are marked.}
  \label{Fig_parabolic}
\end{figure}

\medskip
In the following we give some definitions similar to those in \S\ref{f_theta}.

\begin{defi}[Limbs]
For any pair of admissible multi-indices $\iota= \iota_1 \iota_2 \cdots \iota_k$ and $\mm=(\mm^1,\mm^2,\cdots,\mm^k)$ with $k\geqslant 1$ in $\Lambda_k$ and any $1\leqslant i_k\leqslant p-1$, define the \emph{limb} $L_{\iota_1 \iota_2 \cdots \iota_k}^{\mm^1,\mm^2,\cdots,\mm^k,(i_k)}$ as the closure of the union of the drop $U_{\iota_1 \iota_2 \cdots \iota_k}^{\mm^1,\mm^2,\cdots,\mm^k,(i_k)}$ and all its descendants (i.e., children and grandchildren, etc.).
Note that $U_{\iota_1 \iota_2 \cdots \iota_{k+1}}^{\mm^1,\mm^2,\cdots,\mm^{k+1},(i_{k+1})}$ is a child of $U_{\iota_1 \iota_2 \cdots \iota_k}^{\mm^1,\mm^2,\cdots,\mm^k,(i_k)}$ if and only if $y_{\iota_1 \iota_2 \cdots \iota_{k+1}}^{\mm^1,\mm^2,\cdots,\mm^{k+1}}\in \partial U_{\iota_1 \iota_2 \cdots \iota_k}^{\mm^1,\mm^2,\cdots,\mm^k,(i_k)}$, which is equivalent to $i_k \equiv \iota_{k+1}$ $\Mod p$. Moreover, the limb $L_0^{(i_0)}$ with $0\leqslant i_0\leqslant p-1$ can be defined similarly.
We call $y_{\iota_1 \iota_2 \cdots \iota_k}^{\mm^1,\mm^2,\cdots,\mm^k}$ the root of $L_{\iota_1 \iota_2 \cdots \iota_k}^{\mm^1,\mm^2,\cdots,\mm^k,(i_k)}$.
\end{defi}

Since the parabolic fixed point $y_0$ of $f_\nu$ is the landing point of exactly $p$ external rays in the basin of the infinity, as the depth of a limb of $f_\nu$ goes to infinity, the corresponding Euclidean diameter goes to $0$. This implies that the Julia set of $f_\nu$ is locally connected and the filled Julia set $K(f_\nu)$ can be written as
\begin{equation}
K(f_\nu)\,=\,\bigcup_{i=0}^{p-1}\overline{L_0^{(i)}}\,=\,\bigcup_{i=0}^{p-1}\overline{U_0^{(i)}}~\cup \bigcup_{(n,\mm)\in\Lambda_1}\bigcup_{i=1}^{p-1} L_{n}^{\mm,(i)}.
\end{equation}

Consider a sequence of drops $\big\{U_0^{(i_0)}, U_{\iota_1}^{\mm^1,(i_1)}$, $U_{\iota_1 \iota_2}^{\mm^1,\mm^2,(i_2)}$, $\cdots\big\}$, where each drop $U_{\iota_1 \iota_2 \cdots \iota_k}^{\mm^1,\mm^2,\cdots,\mm^k,(i_k)}$  is the parent of $U_{\iota_1 \iota_2 \cdots \iota_{k+1}}^{\mm^1,\mm^2,\cdots,\mm^{k+1},(i_{k+1})}$. The closure of the union of this sequence
\begin{equation}\label{equ:drop-chain}
\MC:=\overline{\bigcup\limits_k U_{\iota_1 \iota_2 \cdots \iota_k}^{\mm^1,\mm^2,\cdots,\mm^k,(i_k)}}
\end{equation}
is a \emph{drop-chain}.
The intersection of the corresponding limbs $L_0^{(i_0)}\supset U_{\iota_1}^{\mm^1,(i_1)} \supset U_{\iota_1 \iota_2}^{\mm^1,\mm^2,(i_2)} \supset \cdots$ must be a singleton which we denote by
\begin{equation}
\xi(\MC):=\bigcap\limits_k L_{\iota_1 \iota_2 \cdots \iota_k}^{\mm^1,\mm^2,\cdots,\mm^k,(i_k)}.
\end{equation}

\begin{defi}[Drop-rays]
For two different points on the boundary of a drop $U$, we denote $\llbracket z_1,z_2 \rrbracket$ as \eqref{equ:ray-2-pt}, i.e., the union of two rays which connect $z_1$, $z_2$ with the center $z(U)$.
For any drop-chain as \eqref{equ:drop-chain}, the following path $R=R(\MC)$ is the ``most efficient" path in $\MC$ which connects $y_0=0$ to $\xi(\MC)$:
\begin{equation}
R(\MC):=\llbracket 0, y_{\iota_1}^{\mm^1} \rrbracket\cup
\bigcup\limits_{k\geqslant 1}\llbracket y_{\iota_1\cdots\iota_k}^{\mm^1,\cdots,\mm^k}, y_{\iota_1\cdots\iota_k\iota_{k+1}}^{\mm^1,\cdots,\mm^k,\mm^{k+1}} \rrbracket\cup \{\xi(\MC)\}.
\end{equation}
We call $R(\MC)$ the \emph{drop-ray} associated with $\MC$, and say that $R(\MC)$ and $\MC$ \emph{land} at $\xi(\MC)$.
\end{defi}

According to the above definitions,
every point in $K(f_\nu)$ either belongs to the closure of a drop or is the landing point of a unique drop-chain, and the assignment $\MC\mapsto \xi(\MC)$ is one-to-one, i.e., different drop-rays land at distinct points.

\begin{defi}[Spines]
Consider the two drop-chains
\begin{equation}
\MC_\nu:=\overline{U_0^{(0)}\cup U_1^{(1)}\cup U_{11}^{(1)}\cup \cdots}\,, \quad \MC'_\nu:=\overline{U_0^{(1)}\cup U_2^{(1)}\cup U_{21}^{(1)}\cup \cdots}
\end{equation}
with $f_\nu(\MC_\nu')=\MC_\nu$. By definition, we see that $\MC_\nu$ and $\MC_\nu'$  land at the repelling fixed point $\xi(\MC_\nu)= \beta_\nu$ and $\xi(\MC_\nu')=\beta_\nu'$ respectively, where $f_\nu(\beta_\nu')=\beta_\nu$. The \emph{spine} of $f_\nu$ is defined as the union of the drop-rays
\begin{equation}\label{equ:spine-nu}
S_\nu:=R(\MC_\nu)\cup R(\MC_\nu').
\end{equation}
Notice that every point on $S_\nu$ which is not in the interior of $K(f_\nu)$ is either one of the ends points $\beta_\nu$, $\beta_\nu'$, or a preimage of the fixed point $y_0=0$.
\end{defi}

Since $J(f_\nu)$ is locally connected, we have the Carath\'{e}odory loop $\eta_\nu:\T\to J(f_\nu)$ as \eqref{equ:eta-theta} which conjugates the angle-doubling map to $f_\nu$. Similar to \eqref{equ:J-0-1}, we also define two connected subsets $J^0_\nu$ and $J^1_\nu$ of the Julia set $J(f_\nu)$.

\begin{defi}[Itinerary]
For any $z\in J(f_{\nu})$, we define the \emph{itinerary} of $z$ with respect to the spine $S_{\nu}$ as: $\delta=(\delta_0,\delta_1, \delta_2,\cdots)$, where $\delta_i\in\{0,1\}$ is determined by
\begin{equation}
z_i\in J^{\delta_i}_{\nu}, \text{ where } z_0=z \text{ and } z_{i+1}=f_{\nu}(z_i), i\geqslant 0.
\end{equation}
If $z$ is a preimage of the repelling fixed point $\beta_\nu$, we assign two different itineraries to $z$. If $z$ is $p$-accessible, we assign $p$ different itineraries to $z$.
\end{defi}

There are following three cases:

\medskip
\textbf{Case 1}. The orbit of $z$ never hits the spine $S_\nu$. Then there exists a unique angle $t$ with $z=\eta_\nu(t)$ and $t$ has the binary expansion $0.\delta_0\delta_1\delta_2\cdots$.

\medskip
\textbf{Case 2}. The orbit of $z$ eventually hits the fixed point $\beta_\nu$. The two itineraries of $z$ are given as in \S\ref{f_theta}.
In particular, there exists a unique angle $t$ with $z=\eta_\nu(t)$ and $t$ has two binary expansions $0.\delta_0\delta_1\cdots\delta_{n-2}1000\cdots$ and $0.\delta_0\delta_1\cdots\delta_{n-2} 0111\cdots$ for the $z$ satisfying $z_n=\beta_\nu$ (where $n\geqslant 2$ is the smallest integer satisfying this property).

\medskip
\textbf{Case 3}. The orbit of $z$ eventually hits the fixed point $y_0=0$.
For such points, there are exactly $p$ angles $0<s_1<s_2<\cdots<s_p<1$ with $\eta_{\nu}(s_i)=z$ for all $1\leqslant i\leqslant p$.
In particular, there are exactly $p$ angles $0<t_1<t_2<\cdots<t_p<1$ with $\eta_{\nu}(t_i)=y_0$ for all $1\leqslant i\leqslant p$. They are uniquely determined by the rotation number $\nu=q/p$:
\begin{equation}
2t_i\equiv t_{i+q~(\Mod p)}~(\Mod 1), \text{\quad where } 1\leqslant i\leqslant p.
\end{equation}
Hence they have the form $t_i=a_i/(2^p-1)$ for integers $1\leqslant a_1<a_2<\cdots<a_p<2^p-1$. Note that each $t_i$ is $p$-periodic under the doubling map. Thus the binary expansion of $t_1$ has the form
\begin{equation}
0.\overline{\sigma_1\sigma_2\cdots\sigma_p}=0.\sigma_1\sigma_2\cdots\sigma_p\sigma_1\sigma_2\cdots\sigma_p\cdots,
\end{equation}
where $\sigma_i\in\{0,1\}$ for $1\leqslant i\leqslant p$. Then under the binary expansion, we have\footnote{For example, if $\nu=3/5$ (i.e., $q=3$ and $p=5$), then $t_1=11/31=0.\overline{01011}$, $t_4=22/31=0.\overline{10110}$, $t_2=13/31=0.\overline{01101}$, $t_5=26/31=0.\overline{11010}$ and $t_3=21/31=0.\overline{10101}$.}
\begin{equation}
t_{1+(i-1)q~(\Mod p)}=0.\overline{\sigma_i\sigma_{i+1}\cdots\sigma_p\sigma_1\cdots\sigma_{i-1}}, \text{\quad where } 1\leqslant i\leqslant p.
\end{equation}
Denote $\tau(i):=1+(i-1)q~(\Mod p)$, where $1\leqslant i\leqslant p$.
The $p$ itineraries of $y_0=0$ are
\begin{equation}
\delta^{(\tau(i))}(y_0)=\big(\overline{\sigma_i,\sigma_{i+1},\cdots,\sigma_p,\sigma_1,\cdots,\sigma_{i-1}}\big), \text{\quad where } 1\leqslant i\leqslant p.
\end{equation}
We write $\{\delta^{(\tau(i))}(y_0): 1\leqslant i\leqslant p\}$ in another form:
\begin{equation}\label{equ:delta-i-y0}
\delta^{(i)}(y_0)=\big(\overline{\sigma_1^i,\sigma_2^i,\cdots,\sigma_p^i}\big), \text{\quad where } 1\leqslant i\leqslant p.
\end{equation}

If $z=y_{11\cdots1}\in f_{\nu}^{-(k-1)}(y_1)$ with $k\geqslant 1$, then the $p$ itineraries of $z$ are given as $\{\delta^{(i)}:1\leqslant i\leqslant p\}$, where
\begin{equation}\label{equ:delta-i}
\delta^{(i)}=
\left\{
\begin{array}{ll}
\Big(\underbrace{0,0,\cdots,0}_{k \ \textrm{times}}, \overline{\sigma_1^i,\sigma_2^i,\cdots,\sigma_p^i}\Big)  & \text{if } 1\leqslant i\leqslant q, \\
\Big(\underbrace{1,1,\cdots,1}_{k \ \textrm{times}}, \overline{\sigma_1^i,\sigma_2^i,\cdots,\sigma_p^i}\Big)  & \text{if } q+1\leqslant i\leqslant p.
\end{array}
\right.
\end{equation}
The binary expansions of the $p$ angles $\{s_i:1\leqslant i\leqslant p\}$ landing at $z$ are
\begin{equation}\label{equ:s-i-1}
s_i=
\left\{
\begin{array}{ll}
0.\underbrace{00\cdots 0}_{k \ \textrm{times}}\overline{\sigma_1^i\sigma_2^i\cdots\sigma_p^i}  & \text{if } 1\leqslant i\leqslant q, \\
0.\underbrace{11\cdots 1}_{k \ \textrm{times}}\overline{\sigma_1^i\sigma_2^i\cdots\sigma_p^i}  & \text{if } q+1\leqslant i\leqslant p.
\end{array}
\right.
\end{equation}
If $z=y_{21\cdots1}\in f^{-k}_{\nu}(y_1)$ with $k\geqslant 1$, then the $p$ itineraries of $z$ are given as $\{\delta^{(i)}:1\leqslant i\leqslant p\}$, where
\begin{equation}\label{equ:delta-i-2}
\delta^{(i)}=
\left\{
\begin{array}{ll}
\Big(0,\underbrace{1,\cdots,1}_{k \ \textrm{times}}, \overline{\sigma_1^i,\sigma_2^i,\cdots,\sigma_p^i}\Big)  & \text{if } 1\leqslant i\leqslant p-q, \\
\Big(1,\underbrace{0,\cdots,0}_{k \ \textrm{times}}, \overline{\sigma_1^i,\sigma_2^i,\cdots,\sigma_p^i}\Big)  & \text{if } p-q+1\leqslant i\leqslant p.
\end{array}
\right.
\end{equation}
Based on \eqref{equ:delta-i-2}, the binary expansions of the $p$ angles $\{s_i:1\leqslant i\leqslant p\}$ landing at $z$ are obtained similarly as \eqref{equ:s-i-1}.

Moreover, if $z$ is off the spine whose orbit eventually hits the  $y_0$, the itineraries $\{\delta^{(i)}:1\leqslant i\leqslant p\}$ can be defined similarly based on those of $y_{11\cdots1}$ and $y_{21\cdots1}$, and these itineraries give the binary expansions of the $p$ angles $\{s_i:1\leqslant i\leqslant p\}$ respectively.

\medskip
By the above definitions, Proposition \ref{prop:YZ-Siegel} also holds for the parabolic polynomial $f_\nu$, i.e., the following characterizations of the points on the Julia set $J(f_\nu)$ by  itineraries and angles of external rays are true:

\begin{prop}\label{prop:parabolic}
The angle(s) of the external ray(s) landing at any given $z\in J(f_\nu)$ is (are) determined by the itinerary(ies) of $z$, that is, two points in $J(f_\nu)$ having the same itinerary must coincide. Moreover, every infinite sequence of $0$'s and $1$'s can be realized as the itinerary of a unique point in $J(f_{\nu})$.
\end{prop}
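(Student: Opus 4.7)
The plan is to follow the pattern of the proof in the Siegel case (Proposition \ref{prop:YZ-Siegel}, as in \cite[\S 7.1]{YZ01}), adapting it to the parabolic setting. The main conceptual input is that the spine $S_\nu$ defined in \eqref{equ:spine-nu} separates $K(f_\nu)$ into two closed pieces $K^0_\nu$ and $K^1_\nu$ with $K^0_\nu\cap K^1_\nu=S_\nu$, and that the Carath\'{e}odory loop $\eta_\nu:\T\to J(f_\nu)$ semiconjugates the doubling map $d_2:t\mapsto 2t$ to $f_\nu$. The restriction $\eta_\nu|_{[0,1/2]}$ parametrizes $J^0_\nu$ and $\eta_\nu|_{[1/2,1]}$ parametrizes $J^1_\nu$ (see \eqref{equ:J-0-1}); the endpoints $0,1/2$ correspond to $\beta_\nu,\beta_\nu'$, and the rays accumulating on the fixed point $y_0$ and its preimages arise precisely in the $p$-accessible case.

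For the injectivity, I would first treat the generic Case 1. If $z,w\in J(f_\nu)$ share the itinerary $(\delta_0,\delta_1,\ldots)$ and no orbit ever lands on $S_\nu$, then $f_\nu^{\circ n}(z)$ and $f_\nu^{\circ n}(w)$ lie in the same piece $J^{\delta_n}_\nu$ for every $n\geqslant 0$. Pulling back the partition $\{J^0_\nu,J^1_\nu\}$ by $f_\nu^{\circ n}$ yields a nested sequence of closed sets of $J(f_\nu)$ whose diameters shrink to zero—this shrinking uses the local connectivity of $J(f_\nu)$ (from \cite[Chap.\,10]{DH85a}) together with the doubling action on angles: a nested chain of closed dyadic arcs in $\T$ has length $2^{-n}\to 0$, so the $\eta_\nu$-image chain also has diameter tending to zero in $J(f_\nu)$. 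Hence $z=w$, and both equal $\eta_\nu(0.\delta_0\delta_1\delta_2\cdots)$. In Case 2 one checks directly that the two listed itineraries for a preimage $z$ of $\beta_\nu$ correspond to the two binary expansions of the unique angle landing at $z$. In Case 3, one invokes the explicit formulas \eqref{equ:delta-i}, \eqref{equ:s-i-1}, \eqref{equ:delta-i-2}: the $p$ angles $\{s_i\}$ landing at a preimage of $y_0$ are exactly encoded by the $p$ itineraries $\delta^{(i)}$, because the $p$ periodic addresses $\delta^{(i)}(y_0)$ in \eqref{equ:delta-i-y0} are permuted by $d_2$ in precisely the way the $p$ external rays at $y_0$ are permuted by $f_\nu$ (with combinatorial rotation number $\nu=q/p$).

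For the surjectivity, given any sequence $\delta=(\delta_0,\delta_1,\ldots)\in\{0,1\}^\N$, form the angle $t:=0.\delta_0\delta_1\cdots\in\T$ and set $z:=\eta_\nu(t)$. Since $\eta_\nu\circ d_2=f_\nu\circ\eta_\nu$, the point $f_\nu^{\circ n}(z)=\eta_\nu(d_2^n(t))$ lies in $J^{\delta_n}_\nu$ for every $n$ (using the $\{0,1\}$-decomposition of $\T$ via the first binary digit and its correspondence with $J^0_\nu$, $J^1_\nu$), so $\delta$ is indeed an itinerary of $z$. Uniqueness of this $z$ among all points realizing $\delta$ follows from the injectivity already proved.

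The main obstacle, and the only place where the argument genuinely departs from the Siegel case, is handling the parabolic fixed point $y_0$ and its preimages. Because $p$ external rays land at $y_0$ (rather than the two rays landing at the biaccessible points in the Siegel case), one must carefully verify that the $p$ itineraries assigned via \eqref{equ:delta-i}--\eqref{equ:delta-i-2} are precisely the binary expansions of the $p$ rotationally organized angles $\{t_i\}$ landing at $y_0$, and that under $f_\nu$ these angles are permuted in the cycle induced by the rotation number $q/p$. This is a direct but bookkeeping-heavy check using \eqref{equ:roots-1-para} to track how the addresses of drops transform under $f_\nu$ and matching them with the shift on the periodic word $\overline{\sigma_1\sigma_2\cdots\sigma_p}$; once this is in place, the rest of the argument goes through verbatim from the Siegel model.
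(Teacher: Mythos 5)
Your proposal is correct and takes essentially the same approach as the paper. The paper treats this proposition as immediate from the case-by-case itinerary definitions in \S\ref{f_nu} together with the analogy to the Siegel model of \cite[\S 7.1]{YZ01}; your sketch---shrinking nested pullbacks of the spine partition for the generic case, and direct matching of the $p$ cyclic itineraries at preimages of $y_0$ to the $p$ rotationally organized angles encoded by $\nu=q/p$---simply supplies the details the paper leaves implicit in that analogy.
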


\section{Proof of Theorem \ref{thm:mateable}}\label{proof}

In this section, we first study the dynamical properties of a quadratic rational map $F_{\theta,\nu}$ which serves as the model map of the mating  of $f_\theta$ and $f_\nu$.
Then we prove that $f_\theta$ and $f_\nu$ are conformally mateable based on the contraction results in \S\ref{sec:lc} and the combinatorial information in \S\ref{sec:defi}.

\subsection{Dynamics of the model map}

In the rest of this paper, we fix a bounded type irrational number
\begin{equation}
\theta\in(0,1)\setminus\Q \text{\quad and\quad} \nu=q/p\in[0,1)\cap \Q,
\end{equation}
where $q=0$, $p=1$ if $\nu=0$ and $q\geqslant 1$, $p\geqslant 2$ are coprime integers if $\nu\neq 0$. Then
\begin{equation}\label{equ:F-theta-nu}
F_{\theta,\nu}(z)=\frac{e^{2\pi\ii \nu}z+z^2}{1+e^{2\pi\ii \theta}z}
\end{equation}
has a Siegel disk $\Delta^\infty$ centered at infinity with rotation number $\theta$ and a fixed parabolic point at zero with combinatorial rotation number $\nu$.
According to \cite{GS03} or \cite{Zha11}, $\partial\Delta^\infty$ contains a critical point $c^\infty$ of $F_{\theta,\nu}$. The other critical point $c^0$ is contained in an immediate parabolic basin $\widetilde{U}_0^{(0)}$ of $F_{\theta,\nu}$. See Figures \ref{Fig_Mating-plane} and \ref{Fig_mating-cauliflower}.

\begin{figure}[!htpb]
  \setlength{\unitlength}{1mm}
  \centering
  \includegraphics[width=0.7\textwidth]{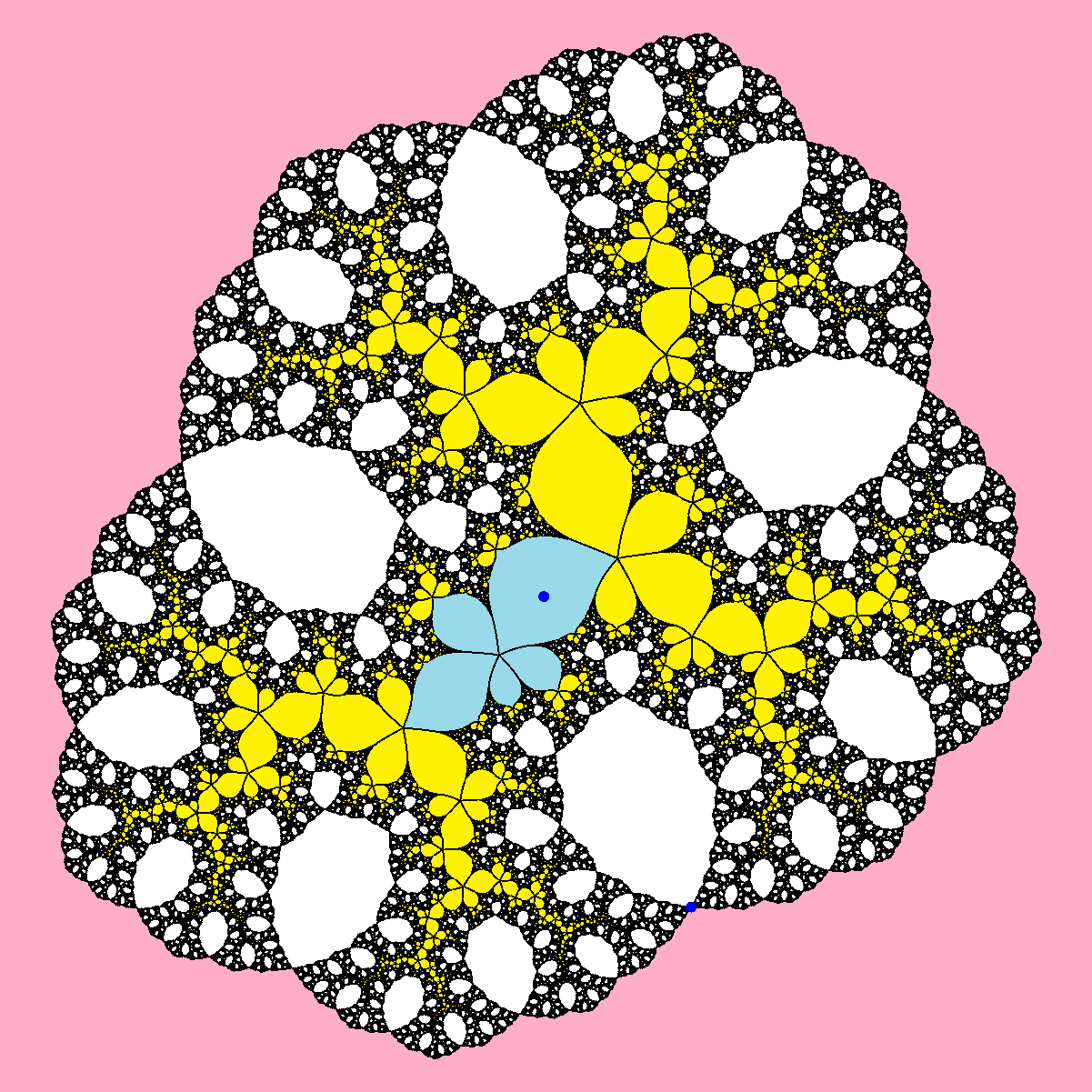}
  \put(-37,12.5){$c^\infty$}
  \put(-48.7,44.5){$c^0$}
  \put(-54,41){$0$}
  \put(-54.5,45.5){\small{$\widetilde{U}_0^{(0)}$}}
  \put(-61.5,34.8){\footnotesize{$\widetilde{U}_0^{(1)}$}}
  \put(-53.7,37.8){\tiny{$\widetilde{U}_0^{(2)}$}}
  \put(-60,41.8){\tiny{$\widetilde{U}_0^{(3)}$}}
  \put(-17,8){$\Delta^\infty$}
  \caption{The Julia set of $F_{\theta,\nu}$, where $\theta=(\sqrt{5}-1)/2$ and $\nu=3/5$. The Siegel disk centered at $\infty$ and the immediate parabolic basins attached at $0$ are colored pink and cyan respectively. The two critical points $c^\infty$ and $c^0$ are marked.}
  \label{Fig_Mating-plane}
\end{figure}

\begin{lem}\label{lem:lc-F}
The Julia set of $F_{\theta,\nu}$ is connected and locally connected. Moreover, if $\nu\neq 0$ (i.e., $p\geqslant 2$), then each Fatou component of $F_{\theta,\nu}$ is a Jordan domain.
\end{lem}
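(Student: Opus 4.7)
The plan has three stages: (i) establish connectedness of $J(F_{\theta,\nu})$, (ii) apply Theorem~\ref{thm:local-connectivity} to deduce local connectivity, and (iii) upgrade to the Jordan domain property when $\nu\neq 0$. First I would show that every Fatou component of $F_{\theta,\nu}$ is simply connected by a Riemann--Hurwitz induction. The periodic Fatou components are the Siegel disk $\Delta^\infty$ together with the cycle of immediate parabolic basins $\widetilde{U}_0^{(0)},\ldots,\widetilde{U}_0^{(p-1)}$ attached at $0$. The Siegel disk is simply connected by definition. The basin $\widetilde{U}_0^{(0)}$ containing the critical point $c^0$ satisfies $F_{\theta,\nu}^{\circ p}\colon\widetilde{U}_0^{(0)}\to\widetilde{U}_0^{(0)}$ of degree $2$ with $c^0$ as its only ramification point, so Riemann--Hurwitz gives $\chi(\widetilde{U}_0^{(0)})=2\cdot 1 - 1 = 1$, i.e., $\widetilde{U}_0^{(0)}$ is simply connected; the remaining basins in the cycle are conformal images under the other iterates of $F_{\theta,\nu}$, hence also simply connected. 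The key observation is that $c^\infty\in\partial\Delta^\infty$ and $c^0\in\widetilde{U}_0^{(0)}$ are the only critical points, both lying in the closures of the periodic components. Consequently every non-periodic Fatou component is mapped conformally onto its image under $F_{\theta,\nu}$, and hence (by induction on the depth) is simply connected. Therefore $J(F_{\theta,\nu})$ is connected.

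Next, local connectivity is an immediate consequence of Theorem~\ref{thm:local-connectivity}. By assumption $\theta$ is of bounded type so $\Delta^\infty$ is a bounded type Siegel disk; the forward orbit of $c^\infty$ is contained in $\partial\Delta^\infty\subset\overline{\Delta^\infty}$, which verifies condition (c), and the forward orbit of $c^0$ lies entirely in the parabolic basin cycle, which verifies condition (b). Combined with the connectedness of $J(F_{\theta,\nu})$ just proved, all the hypotheses of Theorem~\ref{thm:local-connectivity} are met, so $J(F_{\theta,\nu})$ is locally connected.

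For the Jordan domain property under $\nu\neq 0$, I would argue that each periodic Fatou component has Jordan boundary and then propagate this to every preimage. By Zhang's theorem $\partial\Delta^\infty$ is a quasi-circle, so $\Delta^\infty$ is Jordan. For the parabolic basins, Proposition~\ref{prop:attracting} produces a continuous surjection $\Gamma\colon\T\to\partial\widetilde{U}_0^{(i)}$, and the task is to show it is injective. The only potential pinch points are preimages of the parabolic fixed point $0$ on the basin boundary. Because $\nu=q/p$ with $p\geqslant 2$, the parabolic point has $p$ distinct attracting petals distributed among the $p$ immediate basins, so each basin has exactly one boundary access to $0$, ruling out self-touching there; no other preimage of $0$ can create pinching either, since on $\widetilde{U}_0^{(i)}$ the dynamics is conformal for $i\neq 0$ and a degree-$2$ branched cover ramified only at the interior point $c^0$ for $i=0$. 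Pulling back along the cycle shows each periodic parabolic basin is Jordan. Every other Fatou component is mapped conformally onto a periodic Fatou component by some iterate of $F_{\theta,\nu}$ (from stage one), so it is also Jordan.

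The main obstacle is the third stage, specifically the injectivity of the Carath\'eodory parametrization at preimages of $0$, where the argument relies crucially on the combinatorial structure $\nu=q/p$ with $p\geqslant 2$ (the case $\nu=0$ being genuinely different because the parabolic fixed point then offers only one attracting direction). The rest reduces to a routine combination of Riemann--Hurwitz, Theorem~\ref{thm:local-connectivity}, and Proposition~\ref{prop:attracting}.
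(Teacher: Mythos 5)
Your stages (i) and (ii) are fine, though stage (i) is more work than needed: for a quadratic rational map the Julia set is either connected or a Cantor set, and the latter is excluded outright by the presence of a Siegel disk, which is the route the paper takes. Stage (ii) matches the paper exactly.

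The gap is in stage (iii), and it is not a technicality. You assert that ``the only potential pinch points are preimages of the parabolic fixed point $0$ on the basin boundary,'' and then spend the rest of the argument ruling those out. That assertion is unjustified, and in fact the focus is on the wrong set of points. When the paper proves $\partial\widetilde{U}_0^{(0)}$ is a Jordan curve, it assumes toward a contradiction that $\partial\widetilde{U}_0^{(0)}$ self-touches at some $z_0$, encloses a piece of Julia set by a curve $\gamma\subset\widetilde{U}_0^{(0)}\cup\{z_0\}$, and shows via normality that some drop $\widetilde{U}_{\iota_1\cdots\iota_k}$ (an iterated preimage of $\Delta^\infty$) lies in $\gamma^{\Int}$. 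Because that drop must be connected to $\Delta^\infty\ni\infty$ through the chain $\widetilde{U}_0\cup\widetilde{U}_{\iota_1}\cup\cdots$, whose consecutive closures meet only at preimages of the critical point $\widetilde{x}_1=c^\infty$, the pinch point $z_0$ is forced to be a preimage of $c^\infty$, \emph{not} a preimage of $0$. The paper then derives the contradiction: such pinching forces $\partial\Delta^\infty\subset\partial\widetilde{U}_0^{(0)}$, hence also $\partial\Delta^\infty\subset\partial\widetilde{U}_0^{(1)}$, and the two disjoint basins cannot both have this property when $p\geqslant 2$. The case $\nu=0$ makes the point vivid: there the single parabolic basin \emph{does} pinch at preimages of $c^\infty$ (see Figure 7 and the remark after the lemma), so this is the pinching that genuinely has to be excluded for $p\geqslant 2$, and your proposal never touches it. Your correct observations about accesses at preimages of $0$ are a necessary ingredient but not the heart of the matter; without the argument at preimages of $c^\infty$ the conclusion does not follow.
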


\begin{proof}
Note that $F_{\theta,\nu}$ is a quadratic rational map. According to \cite{Yin92} or \cite{Mil93}, $J(F_{\theta,\nu})$ is connected or totally disconnected (i.e., a Cantor set). However, the latter case cannot happen since $F_{\theta,\nu}$ has a Siegel disk. Hence $J(F_{\theta,\nu})$ is connected. By Theorem \ref{thm:local-connectivity}, $J(F_{\theta,\nu})$ is locally connected.

According to \cite{Zha11}, the Siegel disk $\Delta^\infty$ centered at $\infty$ is a quasi-disk whose boundary passing through the critical point $c^\infty$. Since the other critical point $c^0$ lies in the immediate parabolic basin $\widetilde{U}_0^{(0)}$, any Fatou component which is eventually iterated to $\Delta^\infty$ is a Jordan domain (actually a quasi-disk).

Since $F_{\theta,\nu}$ has exactly one critical point in $\EC\setminus\overline{\Delta^\infty}$, this implies that $F_{\theta,\nu}$ has exactly $p$ immediate parabolic basins attaching at $0$ and they form exactly one cycle of parabolic periodic Fatou components $\big\{\widetilde{U}_0^{(0)}, \widetilde{U}_0^{(1)}, \cdots, \widetilde{U}_0^{(p-1)}\big\}$. We name these Fatou components such that
\begin{equation}\label{equ:U-i-tilde}
F_{\theta,\nu}(\widetilde{U}_0^{(i)})=\widetilde{U}_0^{(i-1)}, \text{\quad where } 1\leqslant i\leqslant p \text{ and } \widetilde{U}_0^{(p)}:=\widetilde{U}_0^{(0)}.
\end{equation}
Thus it suffices to prove that $\widetilde{U}_0^{(0)}$ is a Jordan domain when $p\geqslant 2$.

\medskip
Besides $\Delta^\infty$ itself, $\widetilde{U}_0:=\Delta^\infty$ has another preimage $\widetilde{U}_1$ under $F_{\theta,\nu}$ which attaches at the critical point $\widetilde{x}_1:=c^\infty$. Similar to the quadratic polynomial $f_\theta$, $F_{\theta,\nu}^{-1}(\widetilde{U}_1)$ consists of two Fatou components $\widetilde{U}_{1,1}$ and $\widetilde{U}_2$, where $\widetilde{U}_{1,1}$ attaches at $\widetilde{x}_{1,1}\in \partial \widetilde{U}_1$, $\widetilde{U}_2$ attaches at $\widetilde{x}_2\in \partial \widetilde{U}_0$ and $F_{\theta,\nu}^{-1}(\widetilde{x}_1)=\{\widetilde{x}_{1,1},\widetilde{x}_2\}$. Inductively, one can first define $\widetilde{x}_n\in\partial \widetilde{U}_0$ and the \textit{drop} $\widetilde{U}_n$ attaching at $\widetilde{x}_n$ (where $\widetilde{x}_n$ is called the \textit{root} of $\widetilde{U}_n$) with $n\geqslant 2$ such that
\begin{equation}
F_{\theta,\nu}^{\circ (n-1)}(\widetilde{x}_n)=\widetilde{x}_1 \text{\quad and \quad} F_{\theta,\nu}^{\circ (n-1)}(\widetilde{U}_n)=\widetilde{U}_1.
\end{equation}
Then for any multi-index $\iota= \iota_1 \iota_2 \cdots \iota_k$ of length $k\geqslant 2$, where each $\iota_j$ is a positive integer, the $(\iota_1 +\iota_2 +\cdots +\iota_k)$-drop $\widetilde{U}_{\iota_1 \iota_2 \cdots \iota_k}$ of generation $k$ with root
\begin{equation}\label{equ:drop-tilde}
\widetilde{x}(\widetilde{U}_{\iota_1 \iota_2 \cdots \iota_k})=\widetilde{x}_{\iota_1 \iota_2 \cdots \iota_k}
\end{equation}
can be defined similarly to \eqref{roots}.
Note that the intersection of the closure of two different drops is either empty or a singleton. Hence the parent, children can be defined completely similarly to $f_\theta$.
By the above definition, for any multi-index $\iota= \iota_1 \iota_2 \cdots \iota_k$ of length $k\geqslant 1$, the union of the closure of the sequence
\begin{equation}\label{equ:sequence-1}
\widetilde{U}_0=\Delta^\infty, \widetilde{U}_{\iota_1}, \widetilde{U}_{\iota_1\iota_2}, \cdots, \widetilde{U}_{\iota_1 \iota_2 \cdots \iota_k}
\end{equation}
is connected, where $\widetilde{U}_{\iota_1 \iota_2 \cdots \iota_j}$ (resp. $\widetilde{U}_{\iota_1}$) is a child of $\widetilde{U}_{\iota_1 \iota_2 \cdots \iota_{j-1}}$ (resp. $\widetilde{U}_0$) and
\begin{equation}\label{equ:root-tilde}
\partial\widetilde{U}_{\iota_1 \iota_2 \cdots \iota_{j-1}}\cap\partial\widetilde{U}_{\iota_1 \iota_2 \cdots \iota_j}=\{\widetilde{x}_{\iota_1 \iota_2 \cdots \iota_j}\}
\end{equation}
is a preimage of the critical point $\widetilde{x}_1$.

\medskip
We have proved that the boundary of the immediate parabolic basin $\widetilde{U}_0^{(0)}$ is locally connected. Assume that it is not a Jordan curve. Then there exists a Jordan curve $\gamma$ containing a point $z_0$ in the closure of $\widetilde{U}_0^{(0)}$ such that
\begin{equation}\label{equ:cond-1}
z_0\in \gamma\cap \partial \widetilde{U}_0^{(0)}, \quad \gamma\setminus\{z_0\}\subset \widetilde{U}_0^{(0)} \text{\quad and\quad}
J(F_{\theta,\nu})\cap\gamma^{\Int}\neq\emptyset,
\end{equation}
where $\gamma^{\Int}$ is the connected component of $\EC\setminus\gamma$ which does not contain $\infty$. We claim that there exists a Fatou component (i.e., a drop defined above) $\widetilde{U}_{\iota_1 \iota_2 \cdots \iota_k}$, which is an iterated preimage of $\Delta^\infty$ such that
\begin{equation}\label{equ:claim-1}
\widetilde{U}_{\iota_1 \iota_2 \cdots \iota_k}\subset\gamma^{\Int}.
\end{equation}
Indeed, otherwise $\{F_{\theta,\nu}^{\circ n}\}_{n\geqslant 0}$ does not take the values in $\Delta^\infty$ and hence is a normal family in $\gamma^{\Int}$, which contradicts the assumption that $J(F_{\theta,\nu})\cap\gamma^{\Int}\neq\emptyset$.

Note that $\widetilde{U}_{\iota_1 \iota_2 \cdots \iota_k}$ can be connected to $\Delta^\infty$ by the finite sequence \eqref{equ:sequence-1} of closed Jordan disks.
By \eqref{equ:root-tilde}, \eqref{equ:cond-1} and \eqref{equ:claim-1}, we conclude that $z_0=\widetilde{x}_{\iota_1 \iota_2 \cdots \iota_j}$ is a preimage of the critical point $\widetilde{x}_1$.
Since $\{F_{\theta,\nu}^{\circ (np+j)}(\widetilde{x}_1):n\geqslant 0\}$ is dense in $\partial\Delta^\infty$ for any $0\leqslant j\leqslant p-1$, it follows that $\partial\Delta^\infty\subset \partial \widetilde{U}_0^{(0)}$. Hence for any two different points $z_1,z_2\in\partial\Delta^\infty$, there exists a simple arc $\gamma^{(0)}$ in the closure of $\widetilde{U}_0^{(0)}$ connecting $z_1$ with $z_2$ such that $\gamma^{(0)}\setminus\{z_1,z_2\}\subset \widetilde{U}_0^{(0)}$.

If $p\geqslant 2$, by a similar argument as above, we have $\partial\Delta^\infty\subset \partial \widetilde{U}_0^{(1)}$, where $\widetilde{U}_0^{(1)}\neq \widetilde{U}_0^{(0)}$. Moreover, for any two different points $z_3,z_4\in\partial\Delta^\infty\setminus\{z_1,z_2\}$ satisfying $z_1,z_3,z_2,z_4$ lie counterclockwise on $\partial\Delta^\infty$, there exists a simple arc $\gamma^{(1)}$ in the closure of $\widetilde{U}_0^{(1)}$ connecting $z_3$ with $z_4$ such that $\gamma^{(1)}\setminus\{z_3,z_4\}\subset \widetilde{U}_0^{(1)}$. Hence $\gamma^{(0)}\cap\gamma^{(1)}\neq\emptyset$. This is impossible since $\widetilde{U}_0^{(0)}\cap \widetilde{U}_0^{(1)}=\emptyset$.
Thus $\widetilde{U}_0^{(0)}$ is a Jordan domain when $p\geqslant 2$ and the proof is complete.
\end{proof}

\begin{rmk}
If $\nu=0$ (i.e. $p=1$), then the unique parabolic Fatou component of $F_{\theta,\nu}$ is not a Jordan domain. In fact, according to Petersen and Roesch \cite{PR21}, in this case the Julia set of $F_{\theta,\nu}$ is homeomorphic to $J(f_\theta)$. See Figure \ref{Fig_mating-cauliflower}.
\end{rmk}

\begin{figure}[!htpb]
  \setlength{\unitlength}{1mm}
  \centering
  \includegraphics[width=0.9\textwidth]{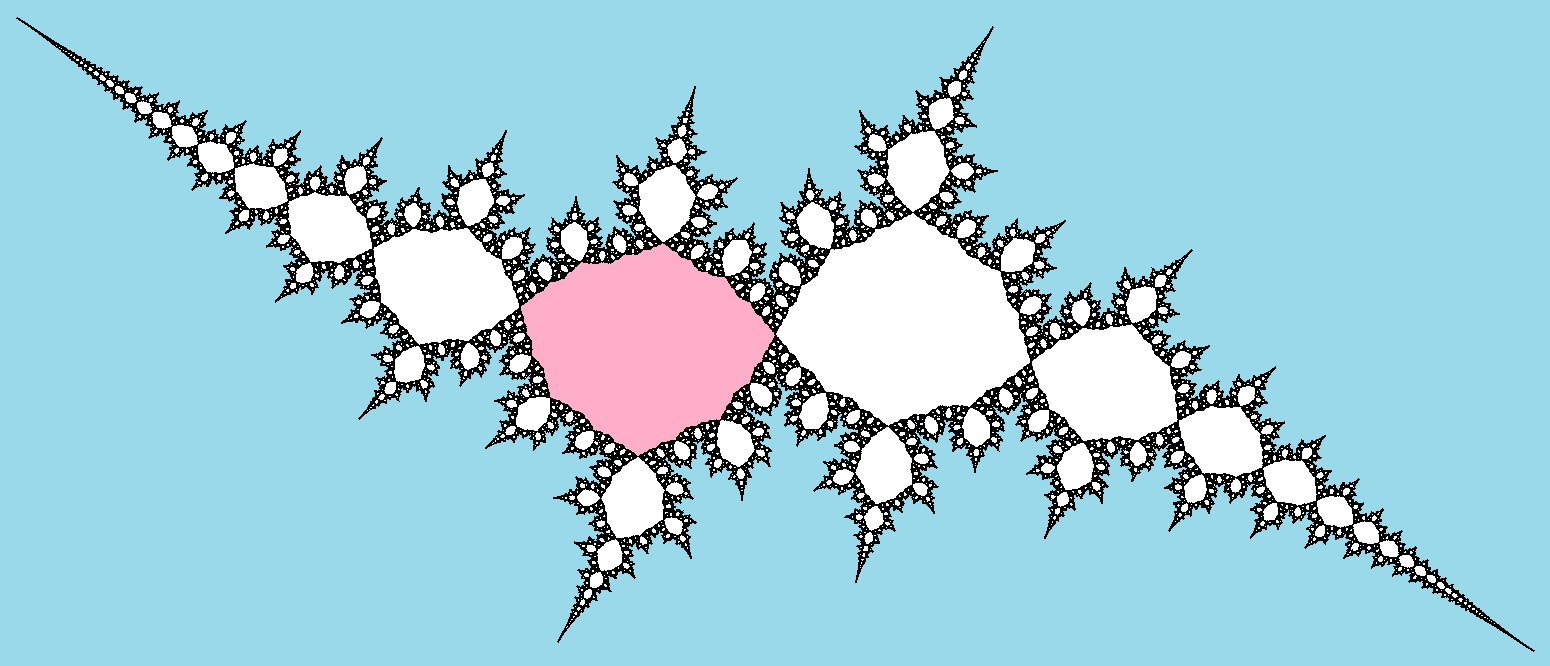}
  \put(-62.5,26.7){$c^\infty$}
  \put(-20,40){$\widetilde{U}_0^{(0)}$}
  \put(-77,25){$\Delta^\infty$}
  \caption{The Julia set of $F_{\theta,\nu}$, where $\theta=(\sqrt{5}-1)/2$ and $\nu=0$ (we put the Siegel disk in $\C$ under a fractional linear map). According to Petersen and Roesch, $J(F_{\theta,\nu})$ is homeomorphic to $J(f_\theta)$. See also Lemma \ref{semiconjugacy}(a) and Figure \ref{Fig_Julias}.}
  \label{Fig_mating-cauliflower}
\end{figure}

As an analogue to the filled Julia sets $K(f_\theta)$ and $K(f_\nu)$, we define
\begin{equation}
K^{0}(F_{\theta,\nu}):=\{z\in\C:\ \textrm{the orbit } \{F_{\theta,\nu}^{\circ n}(z)\}_{n\geqslant 0}\ \textrm{never intersects}\ \Delta^{\infty}\}
\end{equation}
and
\begin{equation}
K^{\infty}(F_{\theta,\nu}):=\overline{\EC\setminus K^{0}(F_{\theta,\nu})}.
\end{equation}
Then $J(F_{\theta,\nu})=\partial K^{0}(F_{\theta,\nu})=\partial K^{\infty}(F_{\theta,\nu})$.
Based on Lemma \ref{lem:lc-F} and \eqref{equ:U-i-tilde}, if $p\geqslant 2$, we can assign an address for each preimage of the immediate parabolic basins similar to the quadratic parabolic polynomial $f_{\nu}$ in \S\ref{f_nu}. Specifically, for any admissible multi-indices $\iota= \iota_1 \iota_2 \cdots \iota_k$ and $\mm=(\mm^1,\mm^2,\cdots,\mm^k)$, the $(\iota_1 +\iota_2 +\cdots +\iota_k)$-drops $\{\widetilde{U}_{\iota_1 \iota_2 \cdots \iota_k}^{\mm^1,\mm^2,\cdots,\mm^k,(i_k)}:1\leqslant i_k\leqslant p-1\}$ of generation $k$ with root
\begin{equation}
\widetilde{y}(\widetilde{U}_{\iota_1 \iota_2 \cdots \iota_k}^{\mm^1,\mm^2,\cdots,\mm^k,(i_k)})=\widetilde{y}_{\iota_1 \iota_2 \cdots \iota_k}^{\mm^1,\mm^2,\cdots,\mm^k}
\end{equation}
growing from the immediate parabolic basins $\big\{\widetilde{U}_0^{(0)}, \widetilde{U}_0^{(1)}, \cdots, \widetilde{U}_0^{(p-1)}\big\}$ in $K^{0}(F_{\theta,\nu})$ can be defined.
Note that the $(\iota_1 +\iota_2 +\cdots +\iota_k)$-drops $\widetilde{U}_{\iota_1 \iota_2 \cdots \iota_k}$ growing from $\Delta^\infty$ have been defined in the proof of Lemma \ref{lem:lc-F}. Hence the limbs $\widetilde{L}_{\iota_1 \iota_2 \cdots \iota_k}$ in $K^{\infty}(F_{\theta,\nu})$, limbs $\widetilde{L}_{\iota_1 \iota_2 \cdots \iota_k}^{\mm^1,\mm^2,\cdots,\mm^k,(i_k)}$ in $K^{0}(F_{\theta,\nu})$ and the drop-chains etc. can be defined completely similarly, i.e., each limb is the closure of the union of the drop with the same address and all its descendants (see \S\ref{sec:defi}).

\begin{lem}\label{limb}
As the depth $\iota_1+\iota_2 +\cdots+\iota_k$ of a limb $\widetilde{L}_{\iota_1 \iota_2 \cdots \iota_k}$ \textup{(}resp. $\widetilde{L}_{\iota_1 \iota_2 \cdots \iota_k}^{\mm^1,\mm^2,\cdots,\mm^k,(i_k)}$ when $p\geqslant 2$\textup{)} goes to infinity, the Euclidean diameter $\diam (\widetilde{L}_{\iota_1 \iota_2 \cdots \iota_k})$
\textup{(}resp. $\diam$ $(\widetilde{L}_{\iota_1 \iota_2 \cdots \iota_k}^{\mm^1,\mm^2,\cdots,\mm^k,(i_k)})$\textup{)} goes to zero.
\end{lem}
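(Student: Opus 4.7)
The plan is to reduce every depth-$N$ limb to one of the finitely many depth-$1$ limbs via a univalent inverse branch, and then to invoke the contraction machinery of Section~\ref{sec:lc}. For Siegel-type limbs, the drop identities inherited from \eqref{roots-1} give a conformal isomorphism $F_{\theta,\nu}^{\circ(N-1)}:\widetilde{U}_{\iota_1\cdots\iota_k}\to \widetilde{U}_1$ with $N=\iota_1+\cdots+\iota_k$, which matches the entire descendant tree and identifies $\widetilde{L}_{\iota_1\cdots\iota_k}$ with the connected component of $F_{\theta,\nu}^{-(N-1)}(\widetilde{L}_1)$ containing $\widetilde{U}_{\iota_1\cdots\iota_k}$. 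The analogous reduction, using \eqref{equ:roots-1-para}, sends each depth-$N$ parabolic-type limb to one of $\{\widetilde{L}_1^{\emptyset,(i)}:1\leqslant i\leqslant p-1\}$.

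Before invoking the lemmas, I would verify that $F_{\theta,\nu}$ fits the framework of Section~\ref{sec:lc}. Since the orbit of $c^\infty$ is dense in $\partial\Delta^\infty$ and the orbit of $c^0$ is captured in the parabolic petals by Corollary~\ref{cor:petal}, one has $\MP(F_{\theta,\nu})\subset \partial\Delta^\infty \cup \bigcup_i \overline{\widetilde{U}_0^{(i)}}$, $\MP_1(F_{\theta,\nu})=\emptyset$, and $W_1=\EC\setminus(\overline{\Delta^\infty}\cup\bigcup_i\overline{Q_i})$. Lemma~\ref{main-lem-WYZZ} then controls pullbacks near $\partial\Delta^\infty$, Lemma~\ref{lem:parabolic-rat} controls pullbacks near the parabolic fixed point $0$, and Lemma~\ref{lem:contrac} supplies a uniform constant $\delta_0>0$ such that iterated pullbacks of any Jordan domain in $W_1$ of diameter less than $\delta_0$ become arbitrarily small.

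To conclude, I would cover each depth-$1$ base limb by finitely many Jordan domains $\{U_0,U_1,\dots,U_m\}$, where $U_1,\dots,U_m\subset W_1$ have diameter less than $\delta_0$, and $U_0$ is a small Jordan neighborhood of the distinguished root point (namely $c^\infty$ for the Siegel base limb, or a preimage of $0$ for each parabolic base limb) chosen in $\EC\setminus\overline{\Delta^\infty}$ or inside a thin repelling sector at $0$ so that $\overline{U_0}$ meets $\MP(F_{\theta,\nu})$ only along $\partial\Delta^\infty$ or only at $0$, respectively. Pulling each $U_j$ back along the inverse branch associated with $\widetilde{U}_{\iota_1\cdots\iota_k}$ yields a finite connected cover of $\widetilde{L}_{\iota_1\cdots\iota_k}$, and each piece has Euclidean diameter less than $\varepsilon/(m+1)$ once $N$ is sufficiently large. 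Connectedness of the limb then gives $\diam(\widetilde{L}_{\iota_1\cdots\iota_k})<\varepsilon$.

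The main technical obstacle is the construction of $U_0$ for the Siegel base limb $\widetilde{L}_1$, which exits $W_1$ precisely at $c^\infty\in\partial\Delta^\infty$: the covering has to fuse a $W_1$-contraction from Lemma~\ref{lem:contrac} with the Siegel-boundary contraction from Lemma~\ref{main-lem-WYZZ}. Ensuring $\overline{U_0}\cap\MP(F_{\theta,\nu})\subset\partial\Delta^\infty$ amounts to ruling out that descendants of $\widetilde{U}_1$ accumulate on the parabolic side within a small neighborhood of $c^\infty$, which follows from the local connectivity of $J(F_{\theta,\nu})$ supplied by Theorem~\ref{thm:local-connectivity} together with the diameter shrinkage of Fatou components in Lemma~\ref{lem:LC-criterion}(b).
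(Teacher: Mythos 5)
Your strategy matches the paper's: reduce every depth-$N$ limb to a conformal preimage of one of the finitely many depth-$1$ limbs, cover by finitely many small Jordan domains, and invoke the contraction Lemma~\ref{lem:contrac}. The paper implements this by covering the single domain $\Omega := \EC\setminus\big(\overline{\Delta^\infty}\cup\bigcup_i\overline{\widetilde{U}_0^{(i)}}\big)$ (which contains every limb of positive depth up to its root) by $M$ Jordan disks of diameter $<\delta_0$ in $W_1$, and then pulling the whole cover back along the univalent branch; you instead cover each base limb $\widetilde L_1$, $\widetilde L_1^{\emptyset,(i)}$ separately and isolate a special disk $U_0$ near the root. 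That is a cosmetic difference; both routes rely on the same contraction input. Your preliminary computation that $\MP(F_{\theta,\nu})\subset\partial\Delta^\infty\cup\bigcup_i\overline{\widetilde U_0^{(i)}}$, $\MP_1(F_{\theta,\nu})=\emptyset$ and $W_1=\EC\setminus(\overline{\Delta^\infty}\cup\bigcup_t\overline{Q}_t)$ is correct and makes explicit what the paper leaves implicit when it invokes Lemma~\ref{lem:contrac}.

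However, your closing paragraph misdiagnoses the issue. To arrange $\overline{U_0}\cap\MP(F_{\theta,\nu})\subset\partial\Delta^\infty$ there is nothing to ``rule out'': the postcritical set is exactly $\partial\Delta^\infty\cup\overline{O^+(c^0)}$, and $\overline{O^+(c^0)}$ is confined to the immediate parabolic basins together with the parabolic fixed point $0$, hence lies at a uniformly positive spherical distance from $\partial\Delta^\infty$. Descendants of $\widetilde U_1$ are iterated \emph{preimages} of $\Delta^\infty$ and play no role in the postcritical set; and neither Theorem~\ref{thm:local-connectivity} nor the diameter shrinkage of Fatou components in Lemma~\ref{lem:LC-criterion}(b) is needed (or even logically natural) at this point, since the goal here is precisely to establish the shrinking that feeds into local connectivity. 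You also understate where $\widetilde L_1$ can leave $W_1$: besides $c^\infty\in\partial\Delta^\infty$, the parabolic fixed point $0\in\overline{Q}_t\setminus Q_t$ may belong to $\widetilde L_1$ (it is a landing point of $p$ drop-chains in $K^\infty(F_{\theta,\nu})$). The paper sidesteps this bookkeeping by covering $\Omega$ once and for all and letting connectedness of the limb carry the diameter estimate across the finitely many exceptional boundary points; if you keep the per-limb approach you should include a second exceptional disk around $0$ inside a thin repelling sector (handled by Lemma~\ref{lem:parabolic-rat}), exactly as Lemma~\ref{lem:contrac} already allows.
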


\begin{proof}
We only consider $p\geqslant 2$ since the case for $p=1$ is similar and simpler. Then $\partial \Delta^\infty\cap\partial \widetilde{U}_0^{(i)}=\emptyset$ for each immediate parabolic basin $\widetilde{U}_0^{(i)}$, where $0\leqslant i\leqslant p-1$. Hence
\begin{equation}
\Omega:=\EC\setminus\Big(\overline{\Delta^\infty}\cup\bigcup_{i=0}^{p-1}\overline{\widetilde{U}_0^{(i)}} \Big)
\end{equation}
is a domain whose boundary is piecewise simple (see Figure \ref{Fig_Mating-plane}).
For any limb $\widetilde{L}_{\iota_1 \iota_2 \cdots \iota_k}\neq \widetilde{L}_0$ and $\widetilde{L}_{\iota_1 \iota_2 \cdots \iota_k}^{\mm^1,\mm^2,\cdots,\mm^k,(i_k)}\not\in\big\{\widetilde{L}_0^{(0)}, \widetilde{L}_0^{(1)}, \cdots, \widetilde{L}_0^{(p-1)}\big\}$ with $1\leqslant i_k\leqslant p-1$, we have
\begin{equation}\label{equ:cover-1}
\widetilde{L}_{\iota_1 \iota_2 \cdots \iota_k}\subset\overline{\Omega} \text{\quad and\quad}
\widetilde{L}_{\iota_1 \iota_2 \cdots \iota_k}^{\mm^1,\mm^2,\cdots,\mm^k,(i_k)} \subset\overline{\Omega}.
\end{equation}
Moreover,
\begin{equation}\label{equ:homeo-1}
F_{\theta,\nu}^{\circ (l-1)}: \widetilde{L}_{\iota_1 \iota_2 \cdots \iota_k}\to \widetilde{L}_1  \text{\quad and\quad}
F_{\theta,\nu}^{\circ (l-1)}: \widetilde{L}_{\iota_1 \iota_2 \cdots \iota_k}^{\mm^1,\mm^2,\cdots,\mm^k,(i_k)} \to \widetilde{L}_1^{(i_k)}
\end{equation}
are homeomorphisms, where $l=\iota_1+\iota_2 +\cdots+\iota_k$.

\medskip
By Lemma \ref{lem:contrac}, there exist $\delta_0>0$ and an integer $M>0$ such that
\begin{itemize}
\item $\Omega$ can be covered by $M$ Jordan disks $\{\Omega_j:1\leqslant j\leqslant M\}$ with $\diam(\Omega_j)<\delta_0$; and
\item For any $\varepsilon>0$, there exists $N=N(\varepsilon)\geqslant 1$, such that $\diam(\Omega_j^{(n)})<\varepsilon$ for all $n\geqslant N$, where $\Omega_j^{(n)}$ is any connected component of $F_{\theta,\nu}^{-n}(\Omega_j)$.
\end{itemize}
By \eqref{equ:cover-1} and \eqref{equ:homeo-1}, for any $\varepsilon>0$, there exists $N=N(\varepsilon/M)\geqslant 1$ such that if $l=\iota_1+\iota_2 +\cdots+\iota_k\geqslant N+1$, then
\begin{equation}
\diam(\widetilde{L}_{\iota_1 \iota_2 \cdots \iota_k})<\varepsilon \text{\quad and\quad}
\diam(\widetilde{L}_{\iota_1 \iota_2 \cdots \iota_k}^{\mm^1,\mm^2,\cdots,\mm^k,(i_k)})<\varepsilon.
\end{equation}
The proof is complete.
\end{proof}

\subsection{Mateable of Siegel and parabolic quadratics}

In this subsection, we give a dynamical decomposition of the model map $F_{\theta,\nu}$ and prove Theorem \ref{thm:mateable}.
Recall that $\eta_\theta:\T\to J(f_\theta)$ and $\eta_\nu:\T\to J(f_\nu)$ are the Carath\'{e}odory loops which conjugate the angle-doubling map to $f_\theta$ and $f_\nu$ respectively.
The proof of the following result is inspired by \cite[\S 7.2]{YZ01}.

\begin{lem}\label{semiconjugacy}
There exist two continuous maps $\phi_{\theta}: K(f_{\theta})\to \EC$ and $\phi_{\nu}:K(f_{\nu})\to \EC$ such that
\begin{equation}\label{equ:semi-conj}
\phi_{\theta}\circ f_{\theta}= F_{\theta,\nu} \circ \phi_{\theta} \text{\quad and\quad} \phi_{\nu}\circ f_{\nu}= F_{\theta,\nu}\circ \phi_{\nu}
\end{equation}
hold on $K(f_{\theta})$ and $K(f_{\nu})$ respectively, where $\phi_{\theta}$ and $\phi_{\nu}$ can be chosen such that
\begin{enumerate}
\item  They are conformal in the interiors of $K(f_{\theta})$ and $K(f_{\nu})$ respectively. In particular, if $\nu=0$, then $\phi_\theta$ can be chosen further such that its restriction on $K(f_\theta)$ is a homeomorphism and $J(F_{\theta,\nu})$ is homeomorphic to $J(f_\theta)$;

\item $\phi_\theta\big(K(f_\theta)\big)\cup \phi_\nu\big(K(f_\nu)\big)=\EC$ and $\phi_{\theta}(z)=\phi_{\nu}(w)$ if and only if there exists an angle $t\in \T$ such that $z=\eta_{\theta}(t)$ and $w=\eta_{\nu}(-t)$.
\end{enumerate}
\end{lem}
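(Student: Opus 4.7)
The plan is to construct $\phi_\theta$ and $\phi_\nu$ first on the interiors of the filled Julia sets (where the definition is essentially forced by canonical coordinates on each drop), then extend them continuously to the Julia sets via the drop-chain description of \S\ref{sec:defi}, and finally verify (a) and (b). The heart of the argument lies in matching the combinatorics, which is already encoded by the parallel address systems developed in \S\ref{f_theta} and \S\ref{f_nu}.

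To initialize $\phi_\theta$, fix a conformal isomorphism $\Delta_\theta \to \Delta^\infty$ intertwining the rotations by $\theta$ (unique up to a rotation of $\Delta_\theta$, which we pin down). For each drop $U_{\iota_1 \cdots \iota_k}$ of $f_\theta$, both $f_\theta^{\circ(\iota_1+\cdots+\iota_k)}$ and $F_{\theta,\nu}^{\circ(\iota_1+\cdots+\iota_k)}$ restrict to conformal maps onto $\Delta_\theta$ and $\Delta^\infty$ from the identically addressed drops $U_{\iota_1 \cdots \iota_k}$ and $\widetilde{U}_{\iota_1 \cdots \iota_k}$ (the latter constructed in the proof of Lemma \ref{lem:lc-F}), so define $\phi_\theta$ on each drop as the unique conformal map making the obvious square commute. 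To initialize $\phi_\nu$, fix a conformal isomorphism $U_0^{(0)} \to \widetilde{U}_0^{(0)}$ sending critical point to critical point and intertwining the attracting Fatou coordinates at the parabolic fixed point; extend to each $U_0^{(i)}$ for $1 \leqslant i \leqslant p-1$ by lifting along the inverse branches of $F_{\theta,\nu}$ around the parabolic cycle, and then inductively to every parabolic drop $U_{\iota_1 \cdots \iota_k}^{\mm^1,\ldots,\mm^k,(i_k)}$ by conformal lift along the identically addressed drop $\widetilde{U}_{\iota_1 \cdots \iota_k}^{\mm^1,\ldots,\mm^k,(i_k)}$ of $F_{\theta,\nu}$. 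The resulting maps are conformal on the interiors of $K(f_\theta)$ and $K(f_\nu)$, giving (a) away from the homeomorphism claim for $\nu=0$.

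To extend to the Julia sets, recall that every point of $J(f_\theta)$ (resp.\ $J(f_\nu)$) either lies on the boundary of a drop or is the unique landing point $\xi(\MC)$ of a drop-chain, and the same holds for $F_{\theta,\nu}$ on both $K^\infty(F_{\theta,\nu})$ and $K^0(F_{\theta,\nu})$. Define the extension at $\xi(\MC)$ to be the landing point of the drop-chain in $F_{\theta,\nu}$ sharing the same address. Continuity at boundary points of drops uses local connectivity of the image drops, while continuity at other Julia points follows from the shrinking of nested limbs both in the source (by \cite{Pet96} and \cite[Chap.\,10]{DH85a}) and in the target (by Lemma \ref{limb}). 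The semi-conjugacy relations \eqref{equ:semi-conj} hold on drops by construction and extend to the Julia sets by density and continuity. Since $\phi_\theta(K(f_\theta)) = K^\infty(F_{\theta,\nu})$ and $\phi_\nu(K(f_\nu)) = K^0(F_{\theta,\nu})$, their union is all of $\EC$, which is the surjectivity half of (b). For the identification condition, translate via Propositions \ref{prop:YZ-Siegel} and \ref{prop:parabolic}: the external angles landing at $z \in J(f_\theta)$ and $w \in J(f_\nu)$ correspond respectively to the itineraries of $z, w$ relative to the spines $S_\theta, S_\nu$, and $t \mapsto -t$ on $\T$ corresponds to bitwise complementation $(\varepsilon_i) \mapsto (1-\varepsilon_i)$ on infinite binary expansions. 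It then suffices to show that, for each $\{0,1\}$-sequence $(\varepsilon_i)$, the drop-chain in $K^\infty(F_{\theta,\nu})$ encoded by $(\varepsilon_i)$ and the drop-chain in $K^0(F_{\theta,\nu})$ encoded by $(1-\varepsilon_i)$ land at a common point of $J(F_{\theta,\nu})$, and conversely every coincidence $\phi_\theta(z) = \phi_\nu(w)$ arises this way.

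The principal obstacle is the combinatorial bookkeeping at spine-meeting orbits. In the parabolic case each preimage of $y_0 = 0$ carries $p$ distinct itineraries as in \eqref{equ:delta-i}--\eqref{equ:delta-i-2}, and one must verify that the $p$ distinct $f_\theta$-points produced by $t \mapsto -t$ from the $p$ angles landing there are all identified by $\phi_\theta$ with a single parabolic preimage in $F_{\theta,\nu}$, consistent with the rotation number $\nu = q/p$ hard-coded into $F_{\theta,\nu}$ via \eqref{equ:F-theta-nu}; the recursive address formulas \eqref{roots-1} and \eqref{equ:roots-1-para} reduce this to a bookkeeping verification for one generation. Similar but simpler care is required at preimages of $\beta_\theta$, $\beta_\nu$, and of the critical points. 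Finally, the case $\nu = 0$ is exceptional: the only parabolic Fatou component of $F_{\theta,0}$ has complicated (non-Jordan) boundary, no parabolic limbs interleave with Siegel limbs, and $\phi_\theta|_{K(f_\theta)}$ becomes injective, recovering the Petersen--Roesch homeomorphism $J(f_\theta) \cong J(F_{\theta,0})$ of \cite{PR21} asserted in (a).
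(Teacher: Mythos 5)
Your high-level plan is the same as the paper's: build $\phi_\theta$ and $\phi_\nu$ on the interiors drop by drop, extend to the Julia sets by landing points of drop-chains, use Lemma~\ref{limb} for continuity, and then translate the ray-equivalence condition into an itinerary condition via Propositions~\ref{prop:YZ-Siegel} and~\ref{prop:parabolic}. The construction half and the $\nu=0$ remark are essentially correct.

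The gap is in the identification condition, where you write that it ``suffices to show that, for each $\{0,1\}$-sequence $(\varepsilon_i)$, the drop-chain in $K^\infty(F_{\theta,\nu})$ encoded by $(\varepsilon_i)$ and the drop-chain in $K^0(F_{\theta,\nu})$ encoded by $(1-\varepsilon_i)$ land at a common point,'' and then assert that the recursive address formulas \eqref{roots-1} and \eqref{equ:roots-1-para} reduce this to a one-generation bookkeeping check. This is too optimistic: those formulas are purely combinatorial and do not, by themselves, say anything about where drop-chains \emph{land} geometrically. In particular, nothing in your argument explains why the Siegel-side spine chain $\overline{\widetilde U_0\cup\widetilde U_1\cup\widetilde U_{11}\cup\cdots}$ and the parabolic-side spine chain $\overline{\widetilde U_0^{(0)}\cup\widetilde U_1^{(1)}\cup\widetilde U_{11}^{(1)}\cup\cdots}$ land at the \emph{same} point of $J(F_{\theta,\nu})$. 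The paper establishes this by observing that each is a fixed drop-chain, hence lands at a fixed point by Lemma~\ref{limb}, and that $F_{\theta,\nu}$ has exactly one fixed point other than the Siegel and parabolic ones, which must be repelling; this forces both chains onto the unique repelling fixed point $\widetilde\beta$. Only then does $\Sigma=\phi_\theta(S_\theta)\cup\phi_\nu(S_\nu)$ become a Jordan curve on $\EC$, which separates the sphere and is what lets you define the $\Sigma_\theta$- and $\Sigma_\nu$-itineraries on $J(F_{\theta,\nu})$ and conclude $\Gamma_\theta^0=\Gamma_\nu^1$, $\Gamma_\theta^1=\Gamma_\nu^0$. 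Without this topological step, the passage from ``same itinerary'' to ``same point in $J(F_{\theta,\nu})$'' has no foundation, and the subsequent case analysis at preimages of $\widetilde\beta$, of the critical point, and of the parabolic fixed point cannot even get started. You have correctly identified that these special orbits are where the bookkeeping concentrates, but the mechanism enabling that bookkeeping — the Jordan-curve structure of $\Sigma$ and the coincidence of the two spine landing points — is missing from your argument.
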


\begin{proof}
(a) We only state the construction of $\phi_\nu$ since the proof of the properties of $\phi_{\theta}$ is completely the same.
Consider the quadratic parabolic polynomial $f_{\nu}$ and the rational map $F_{\theta,\nu}$. Since both of $f^{\circ p}_{\nu}$ and $F_{\theta,\nu}^{\circ p}$ are conformally conjugate to $z\mapsto z+z^{2}$ in the immediate parabolic basins, one can define a conformal conjugacy
\begin{equation}\label{equ:conj-immed}
\phi_{\nu}: \bigcup_{i=0}^{p-1}U_0^{(i)} \to  \bigcup_{i=0}^{p-1}\widetilde{U}_0^{(i)},
\end{equation}
where $\phi_\nu(U_0^{(i)})=\widetilde{U}_0^{(i)}$ for each $i$.

Suppose $\nu\neq 0$ (i.e., $p\geqslant 2$). Then $U_0^{(i)}$ and $\widetilde{U}_0^{(i)}$ are Jordan domains by Lemma \ref{lem:lc-F}. The conformal conjugacy $\phi_\nu$ can be extended homeomorphically to the closure of $\bigcup_{i=0}^{p-1}U_0^{(i)}$ such that it is a conjugacy which we still denote by $\phi_\nu$.
Moreover, by pulling back immediate parabolic basins, $\phi_{\nu}$ can be extended homeomorphically to the union of the closures of all drops of $f_{\nu}$ with the form $U_{\iota_1 \iota_2 \cdots \iota_k}^{\mm^1,\mm^2,\cdots,\mm^k,(i_k)}$ such that the following restriction is conformal:
\begin{equation}
\phi_\nu: U_{\iota_1 \iota_2 \cdots \iota_k}^{\mm^1,\mm^2,\cdots,\mm^k,(i_k)} \to \widetilde{U}_{\iota_1 \iota_2 \cdots \iota_k}^{\mm^1,\mm^2,\cdots,\mm^k,(i_k)}.
\end{equation}

Next we extend $\phi_{\nu}$ to a continuous semiconjugacy $\phi_\nu:K(f_{\nu})\to K^0(F_{\theta,\nu})$. By definitions in \S\ref{f_nu}, every point in $K(f_{\nu})$ is either in the closure of a drop or is the landing point of a unique drop-chain. Thus it remains to define $\phi_\nu$ at the landing points of drop-chains of $f_{\nu}$.
Let $\MC_{\nu}$ be a drop-chain of $f_\nu$ which lands at a point $z$ and consider the corresponding drop-chain $\widetilde{\MC}_{\nu}$ of $F_{\theta,\nu}$ whose drops have the same addresses. By Lemma \ref{limb}, the diameters of the corresponding limbs go to zero as the depth goes to infinity. Hence $\widetilde{\MC}_{\nu}$ lands at a well-defined point $z'\in K^0(F_{\theta,\nu})$. It is naturally to define $\phi_{\nu}(z)=z'$.

Now we prove that $\phi_{\nu}$ is continuous as a map from $K(f_{\nu})$ into $\EC$. Take a point $z\in K(f_{\nu})$ and a sequence $z_n\in K(f_{\nu})$ converging to $z$. If $z$ belongs to the interior of $K(f_{\nu})$, the continuity is trivial. If $z\in \partial K(f_{\nu})=J(f_\nu)$, by Lemma \ref{limb}, as the depth of a limb $\widetilde{L}_{\iota_1 \iota_2 \cdots \iota_k}^{\mm^1,\mm^2,\cdots,\mm^k,(i_k)}$ of $F_{\theta,\nu}$ goes to infinity, $\diam(\widetilde{L}_{\iota_1 \iota_2 \cdots \iota_k}^{\mm^1,\mm^2,\cdots,\mm^k,(i_k)})$ goes to zero. It is clear that $\phi_{\nu}(z_n)\to \phi_{\nu}(z)$ as $n\to\infty$.
Hence the semiconjugacy relations in \eqref{equ:semi-conj} hold and we have $\phi_\theta(K(f_\theta))=K^\infty(F_{\theta,\nu})$ and $\phi_\nu(K(f_\nu))=K^0(F_{\theta,\nu})$.
Note that the semiconjugacy relation for $\phi_\theta$ holds not only for $p\geqslant 2$, but also for $p=1$.

\medskip
If $\nu=0$ (i.e., $p=1$), by Lemma \ref{lem:lc-F}, $F_{\theta,\nu}$ has a completely invariant parabolic Fatou component $\widetilde{U}_0^{(0)}$ whose boundary is locally connected. By \eqref{equ:conj-immed}, the semiconjugacy relation $\phi_{\nu}\circ f_{\nu}= F_{\theta,\nu}\circ \phi_{\nu}$ still holds. It remains to prove that in this case the semiconjugacy $\phi_\theta$ obtained above is actually a conjugacy.
This is equivalent to prove that any two different drop-chains of $F_{\theta,\nu}$ in $K^\infty(F_{\theta,\nu})$ land at distinct points. In fact, otherwise there exists a simple curve $\gamma$ such that the sequence $\{F_{\theta,\nu}^{\circ n}\}_{n\geqslant 0}$ is normal in the bounded domain surrounded by $\gamma$ (similar to the argument in Lemma \ref{lem:lc-F}), which is a contradiction. Therefore, if $\nu=0$, the restriction of $\phi_\theta$ on $K(f_\theta)$ is a homeomorphism and $J(F_{\theta,\nu})$ is homeomorphic to $J(f_\theta)$.

\medskip
(b) In the rest proof we assume that $\nu\neq 0$ (i.e., $p\geqslant 2$) since during the proof we will see that the case $\nu=0$ (i.e., $p=1$) is completely similar and much simpler (because in this case $F_{\theta,\nu}$ has a completely invariant parabolic Fatou component).

\medskip
\textbf{Step 1.} We denote two drop-chains of $F_{\theta,\nu}$ growing from $\widetilde{U}_0=\Delta^\infty$ by:
\begin{equation}
\widetilde{\MC}^\infty:=\overline{\widetilde{U}_0\cup\widetilde{U}_1\cup \widetilde{U}_{11}\cup \cdots } \text{\quad and\quad}
\widetilde{\MC}'^\infty:=\overline{\widetilde{U}_0\cup\widetilde{U}_2\cup \widetilde{U}_{21}\cup \cdots}\,,
\end{equation}
where $F_{\theta,\nu}(\widetilde{\MC}'^\infty)=\widetilde{\MC}^\infty$.
By Lemma \ref{limb}, the drop-chain $\widetilde{\MC}^\infty$ lands at a fixed point $\widetilde{\beta}$ of $F_{\theta,\nu}$. Obviously $\widetilde{\beta}\neq \infty$ since $\infty$ is contained in the Siegel disk. We claim that $\widetilde{\beta}$ is not the parabolic fixed point $0$. For otherwise, the drop-chain $\widetilde{\MC}^\infty$ will be mapped to another because $F_{\theta,\nu}$ has combinatorial rotation number $q/p\in (0,1)$ at $0$. Thus $\widetilde{\beta}$ must be a repelling fixed point of $F_{\theta,\nu}$ and the drop-chain $\widetilde{\MC}'^\infty$ lands at a prefixed point $\widetilde{\beta}'\neq \widetilde{\beta}$ with $F_{\theta,\nu}(\widetilde{\beta}')=\widetilde{\beta}$.

We also denote two drop-chains of $F_{\theta,\nu}$ growing from two immediate parabolic basins by:
\begin{equation}
\widetilde{\MC}^0:=\overline{\widetilde{U}^{(0)}_0\cup\widetilde{U}^{(1)}_1\cup \widetilde{U}^{(1)}_{11}\cup \cdots } \text{\quad and\quad} \widetilde{\MC}'^0:=\overline{\widetilde{U}^{(1)}_0\cup\widetilde{U}^{(1)}_2\cup \widetilde{U}^{(1)}_{21}\cup \cdots}\,,
\end{equation}
where $F_{\theta,\nu}(\widetilde{\MC}'^0)=\widetilde{\MC}^0$. By a similar argument as above, we conclude that the drop-chains $\widetilde{\MC}^0$ and $\widetilde{\MC}'^0$ also land at $\widetilde{\beta}$ and $\widetilde{\beta}'$ respectively, since $F_{\theta,\nu}$ has exactly three fixed points and only one of them is repelling.

Let $S_{\theta}$ and $S_{\nu}$ be the \textit{spines} of $f_{\theta}$ and $f_{\nu}$ defined in \eqref{equ:spine-theta} and \eqref{equ:spine-nu} respectively.
We define
\begin{equation}
\Sigma := \Sigma_{\theta}\cup \Sigma_{\nu}, \text{\quad where }\Sigma_{\theta}:=\phi_{\theta}(S_{\theta}) \text{ and }\Sigma_{\nu}:=\phi_{\nu}(S_{\nu}).
\end{equation}
By definition, the two simple arcs $\Sigma_{\theta}$ and $\Sigma_{\nu}$ only intersect at the two endpoints $\widetilde{\beta}$ and $\widetilde{\beta}'$. Therefore $\Sigma$ is a Jordan curve on $\EC$. See Figure \ref{Fig_spine}.

\begin{figure}[!htpb]
  \setlength{\unitlength}{1mm}
  \setlength{\fboxsep}{0pt}
  \centering
  \fbox{\includegraphics[width=0.7\textwidth]{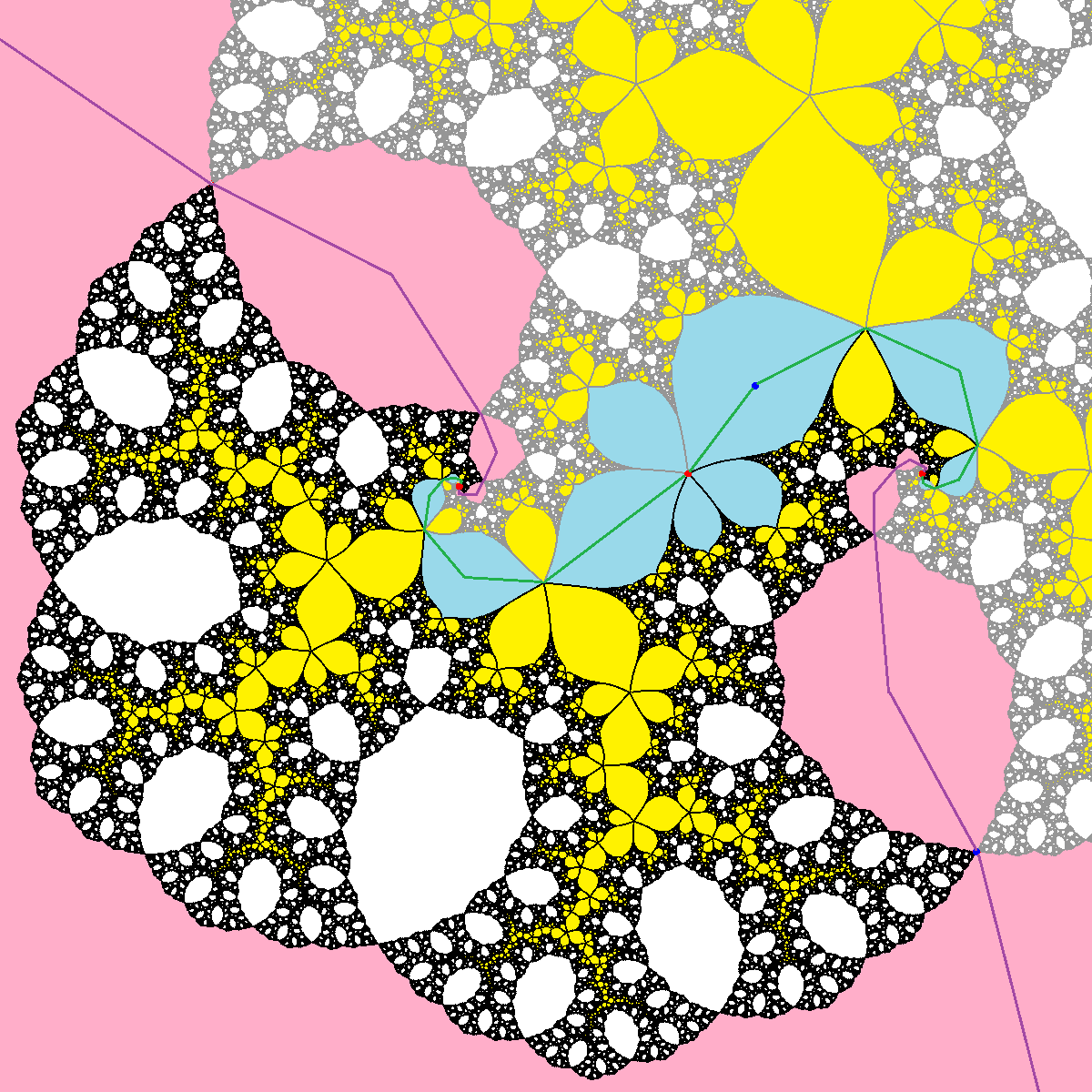}}
  \put(-95,5){$\widetilde{U}_0=\Delta^\infty$}
  \put(-9,18){$\widetilde{x}_1$}
  \put(-21.5,47.2){\footnotesize{$\widetilde{x}_{11}$}}
  \put(-16,35.5){$\widetilde{U}_1$}
  \put(-65,76){$\widetilde{U}_2$}
  \put(-85.5,83.5){$\widetilde{x}_2$}
  \put(-59,64){\footnotesize$\widetilde{x}_{21}$}
  \put(-56.7,57.5){\footnotesize{$\widetilde{U}_{21}$}}
  \put(-22.4,53){\footnotesize{$\widetilde{U}_{11}$}}
  \put(-16.8,52){\small{$\widetilde{\beta}$}}
  \put(-36,66){$\widetilde{U}_0^{(0)}$}
  \put(-16.5,65){\small{$\widetilde{U}_1^{(1)}$}}
  \put(-13.8,55){\tiny{$\widetilde{U}_{11}^{(1)}$}}
  \put(-47,50){\small{$\widetilde{U}_0^{(1)}$}}
  \put(-59.5,45.5){\footnotesize{$\widetilde{U}_2^{(1)}$}}
  \put(-63,52){\tiny{$\widetilde{U}_{21}^{(1)}$}}
  \put(-58,50.5){\small{$\widetilde{\beta}'$}}
  \put(-30.5,61.5){$c^0$}
  \put(-39.7,57){$0$}
  \put(-24,71.8){$\widetilde{y}_1$}
  \put(-49.2,43){\small{$\widetilde{y}_2$}}
  \put(-9.5,59){\footnotesize{$\widetilde{y}_{11}$}}
  \put(-65.5,48.5){\footnotesize{$\widetilde{y}_{21}$}}
  \put(-5,4){\small{$\Sigma_\theta$}}
  \put(-97,95){\small{$\Sigma_\theta$}}
  \put(-36.5,60){\small{$\Sigma_\nu$}}
  \caption{A decomposition of the dynamical plane of $F_{\theta,\nu}$. Some crucial points and Fatou components are marked. The union of spines $\Sigma_\theta\cup\Sigma_\nu$ is a Jordan curve cutting the Riemann sphere into two Jordan domains so that the itineraries of the points in $J(F_{\theta,\nu})$ can be defined.}
  \label{Fig_spine}
\end{figure}

Let $J_\theta^i$ and $J_\nu^i$ be defined in \S\ref{sec:defi} (see \eqref{equ:J-0-1}), where $i=0,1$.
Consider the four connected sets
\begin{equation}
\Gamma _{\theta}^i:=\phi_{\theta}(J_{\theta}^i) \text{\quad and\quad} \Gamma _{\nu}^i:=\phi_{\nu}(J_{\nu}^i),
\end{equation}
where $i=0,1$. Denote
\begin{equation}
\begin{split}
X:=&~\{\widetilde{\beta},\widetilde{\beta}',c^\infty=\widetilde{x}_1,\widetilde{x}_{11}, \widetilde{x}_{111},\cdots,\widetilde{x}_{2},\widetilde{x}_{21},\widetilde{x}_{211},\cdots\} \text{\quad and} \\
Y:=&~\{\widetilde{\beta},\widetilde{\beta}',0=\widetilde{y}_0,\widetilde{y}_1,\widetilde{y}_{11}, \widetilde{y}_{111},\cdots,\widetilde{y}_{2},\widetilde{y}_{21},\widetilde{y}_{211},\cdots\}.
\end{split}
\end{equation}
We claim that
\begin{equation}\label{equ:Gamma}
\Gamma _{\theta}^0\cap \Gamma _{\theta}^1=\Gamma _{\nu}^0 \cap \Gamma _{\nu}^1=X\cup Y
\end{equation}
and
\begin{equation}\label{itinerary}
\Gamma _{\theta}^0=\Gamma _{\nu}^1 \text{\quad and\quad}\ \Gamma _{\theta}^1=\Gamma _{\nu}^0.
\end{equation}

\medskip
By definition we have $X\subset \Gamma _{\theta}^0\cap \Gamma _{\theta}^1\subset X\cup Y$ and $Y\subset \Gamma _{\nu}^0\cap \Gamma _{\nu}^1\subset X\cup Y$. To obtain \eqref{equ:Gamma}, it suffices to prove that $X\subset \Gamma _{\nu}^0\cap \Gamma _{\nu}^1$ and $Y\subset \Gamma _{\theta}^0\cap \Gamma _{\theta}^1$.
Note that by Lemmas \ref{lem:lc-F} and \ref{limb}, these two inclusion relations follow from the fact that $\widetilde{x}_1=c^\infty$ is the landing point of exactly $2$ drop-chains growing from the immediate parabolic basins $\big\{\widetilde{U}_0^{(0)}, \widetilde{U}_0^{(1)}, \cdots, \widetilde{U}_0^{(p-1)}\big\}$ in $K^{0}(F_{\theta,\nu})$ and that $\widetilde{y}_0=0$ is the landing point of exactly $p$ drop-chains growing from $\widetilde{U}_0=\Delta^\infty$ in $K^{\infty}(F_{\theta,\nu})$. Therefore \eqref{equ:Gamma} is true.

Note that $\Sigma$ is a Jordan curve dividing the Riemann sphere into two disjoint Jordan domains $D_0$ and $D_1$, where $\Gamma_\theta^0\subset \overline{D}_0$ and $\Gamma_\theta^1\subset \overline{D}_1$. On the other hand, we have $\Gamma_\nu^0\subset \overline{D}_0$ or $\Gamma_\nu^1\subset \overline{D}_0$. Considering the direction on the Riemann sphere, we must have $\Gamma_\nu^0\subset \overline{D}_1$ and $\Gamma_\nu^1\subset \overline{D}_0$.
By \eqref{equ:Gamma} and
\begin{equation}\label{equ:J-decom}
\Gamma_\theta^0\cup\Gamma_\theta^1=\partial K^\infty(F_{\theta,\nu})=J(F_{\theta,\nu})=\partial K^0(F_{\theta,\nu})=\Gamma_\nu^0\cup\Gamma_\nu^1,
\end{equation}
it follows that \eqref{itinerary} holds.
See Figure \ref{Fig_spine}, where $\Gamma _{\theta}^0=\Gamma _{\nu}^1$ is colored black while $\Gamma _{\theta}^1=\Gamma _{\nu}^0$ is colored gray.

\medskip
\textbf{Step 2.}
Based on \eqref{itinerary} and \eqref{equ:J-decom}, we now ready to define the itinerary(ies) of every point $\zeta\in J(F_{\theta,\nu})$ with respect to $\Sigma_\theta$ and $\Sigma_\nu$ by looking at the points in the forward orbit of $\zeta$ and deciding whether they belong to $\Gamma _{\theta}^0=\Gamma _{\nu}^1$ or $\Gamma _{\theta}^1=\Gamma _{\nu}^0$.
Specifically, the \emph{itinerary(ies)} of $\zeta$ with respect to $\Sigma_{\theta}$ (resp. $\Sigma_\nu$) is defined as $\varepsilon=(\varepsilon_0,\varepsilon_1, \varepsilon_2,\cdots)$ (resp. $\delta=(\delta_0,\delta_1,\delta_2,\cdots)$), where $\varepsilon_n\in\{0,1\}$ (resp. $\delta_n\in\{0,1\}$) is determined by
\begin{equation}
\zeta_n:=F_{\theta,\nu}^{\circ n}(\zeta)\in \Gamma^{\varepsilon_n}_{\theta}~(\text{resp. } \zeta_n\in \Gamma^{\delta_n}_{\nu}), \text{\quad where }n\geqslant 0.
\end{equation}
Suppose $\zeta=\phi_{\theta}(z)=\phi_{\nu}(w)$, where $z\in J(f_\theta)$ and $w\in J(f_\nu)$. We consider four cases:

\medskip
\textit{Case 1}: Suppose $\zeta$ is not a preimage of $\widetilde{\beta}$, $\widetilde{x}_1$ or $\widetilde{y}_0$.
Then $\zeta$ has a unique $\Sigma_{\theta}$-itinerary $(\varepsilon_0,\varepsilon_1,\varepsilon_2,\cdots)$ and a unique $\Sigma_{\nu}$-itinerary $(\delta_0,\delta_1,\delta_2,\cdots)$, where $\delta_j=1-\varepsilon_j$ for all $j$.
Then $z$ has the $S_{\theta}$-itinerary $(\varepsilon_0,\varepsilon_1,\varepsilon_2,\cdots)$ and $w$ has the $S_{\nu}$-itinerary $(\delta_0,\delta_1,\delta_2,\cdots)$.
Setting $t=0.\varepsilon_0\varepsilon_1\varepsilon_2\cdots$ in the binary expansion, we have $z=\eta_{\theta}(t)$ and $w=\eta_{\nu}(-t)$.

\medskip
\textit{Case 2}: Suppose $\zeta$ is a preimage of $\widetilde{\beta}$. Then both $z$ and $w$ are preimages of the corresponding $\beta$-fixed points for $f_{\theta}$ and $f_{\nu}$.
If $\zeta_n=\widetilde{\beta}$ for the minimal integer $n\geqslant 0$ having such property, then the two $\Sigma_\theta$-itineraries of $\zeta$ coincide with those of $z$ with respect to $S_{\theta}$:
\begin{equation}
(\varepsilon_0,\varepsilon_1,\cdots,\varepsilon_{n-2},1,0,0,0,\cdots) \text{\quad and \quad}
(\varepsilon_0,\varepsilon_1,\cdots,\varepsilon_{n-2},0,1,1,1,\cdots).
\end{equation}
Both of which determine the same angle $t$ with $z = \eta_{\theta}(t)$.
The two $\Sigma_\nu$-itineraries of $\zeta$ coincide with those of $w$ with respect to $S_{\nu}$:
\begin{equation}
(\delta_0,\delta_1,\cdots,\delta_{n-2},0,1,1,1,\cdots)  \text{\quad and \quad}
(\delta_0,\delta_1,\cdots,\delta_{n-2},1,0,0,0,\cdots),
\end{equation}
where $\delta_j=1-\varepsilon_j$ for $0\leqslant j\leqslant n-2$.
Both of which determine the same angle $s$ with $w = \eta_{\nu}(s)$.
This implies that the binary digits of $t$ and $s$ are opposite and we have $z=\eta_{\theta}(t)$ and $w=\eta_{\nu}(-t)$.

\medskip
\textit{Case 3}: Suppose $\zeta$ is a preimage of $\widetilde{x}_1$. Since $\widetilde{x}_1$ and $\widetilde{y}_0$ have disjoint orbits under $F_{\theta,\nu}$, $\zeta$ cannot be a preimage of $\widetilde{y}_0$. This implies that $z$ is a preimage of the critical point $x_1$ of $f_{\theta}$. Denote one critical value of $F_{\theta,\nu}$ by $\widetilde{x}_0:=F_{\theta,\nu}(\widetilde{x}_1)$. Note that the $\Sigma_\theta$-itinerary and $\Sigma_\nu$-itinerary of $\widetilde{x}_0$ are unique.
Let $n\geqslant 0$ be the smallest integer such that $\zeta_n\in X\setminus\{\widetilde{\beta},\widetilde{\beta}'\}$. We consider $\zeta_n=\widetilde{x}_{11\cdots 1}\in F_{\theta,\nu}^{-(k-1)}(\widetilde{x}_1)$ for some $k\geqslant 1$ (The argument for $\zeta_n=\widetilde{x}_{21\cdots 1}$ is similar).
Therefore $\zeta$ has two $\Sigma_\theta$-itineraries which coincide with those of $z$ with respect to $S_{\theta}$:
\begin{equation}
\begin{split}
\varepsilon_\theta=&~(\varepsilon_0,\varepsilon_1,\cdots,\varepsilon_{n-1},\underbrace{0,\cdots,0}_{k \ \textrm{terms}},
\underbrace{\omega_1,\omega_2,\cdots}_{\Sigma_{\theta}\textrm{-itinerary of }\widetilde{x}_0}) \text{\quad and} \\
\varepsilon_\theta'=&~(\varepsilon_0,\varepsilon_1,\cdots,\varepsilon_{n-1},\underbrace{1,\cdots,1}_{k \ \textrm{terms}},
\underbrace{\omega_1,\omega_2,\cdots}_{\Sigma_{\theta}\textrm{-itinerary of } \widetilde{x}_0}).
\end{split}
\end{equation}

Note that $\zeta\in J(F_{\theta,\nu})=\partial K^0(F_{\theta,\nu})$ is the landing point of exactly two drop-chains in $K^0(F_{\theta,\nu})$ whose preimages under $\phi_\nu$ land at two distinct points $w,w'\in J(f_\nu)$ respectively (i.e., $\phi_\nu(w)=\phi_\nu(w')=\zeta$).
These two points are neither preimages of the $\beta$-fixed point nor preimages of the parabolic fixed point $0$ of $f_{\nu}$. So each of them has a unique $S_{\nu}$-itinerary:
\begin{equation}
\begin{split}
\delta_{\nu}=&~(\delta_0,\delta_1,\cdots,\delta_{n-1},\underbrace{1,\cdots,1}_{k \ \textrm{terms}} ,
\underbrace{\sigma_1,\sigma_2,\cdots}_{\Sigma_{\nu}\textrm{-itinerary of } \widetilde{x}_0}) \text{\quad and} \\
\delta_{\nu}'=&~(\delta_0,\delta_1,\cdots,\delta_{n-1},\underbrace{0,\cdots,0}_{k \ \textrm{terms}} ,
\underbrace{\sigma_1,\sigma_2,\cdots}_{\Sigma_{\nu}\textrm{-itinerary of } \widetilde{x}_0}),
\end{split}
\end{equation}
where $\delta_j=1-\varepsilon_j$ for $0\leqslant j\leqslant n-1$ and $\sigma_j=1-\omega_j$ for all $j\geqslant 1$.
Let $t$ and $t'$ be the angles with binary expansions corresponding to the itineraries $\varepsilon_\theta$ and $\varepsilon_\theta'$ respectively. Then $z=\eta_\theta(t)=\eta_\theta(t')$, $w=\eta_\nu(-t)$ and $w'=\eta_\nu(-t')$.

\medskip
\textit{Case 4}: Suppose $\zeta$ is a preimage of $\widetilde{y}_0$. Then $\zeta$ cannot be a preimage of $\widetilde{x}_1$. This implies that $w$ is a preimage of the parabolic fixed point $y_0=0$ of $f_{\nu}$. Let $n\geqslant 0$ be the smallest integer such that $\zeta_n\in Y\setminus\{\widetilde{\beta},\widetilde{\beta}'\}$. We consider $\zeta_n=\widetilde{y}_{11\cdots 1}\in F_{\theta,\nu}^{-(k-1)}(\widetilde{y}_1)$ for some $k\geqslant 1$ (The argument for $\zeta_n=\widetilde{y}_0$ and $\zeta_n=\widetilde{y}_{21\cdots 1}$ are similar).
Therefore $\zeta$ has $p$ $\Sigma_\nu$-itineraries $\{\delta_\nu^{(i)}:1\leqslant i\leqslant p\}$ which coincide with those of $w$ with respect to $S_{\nu}$ (see \S\ref{f_nu}):
\begin{equation}
\delta_\nu^{(i)}=
\left\{
\begin{array}{ll}
\Big(\delta_0,\delta_1,\cdots,\delta_{n-1},\underbrace{0,0,\cdots,0}_{k \ \textrm{times}}, \overline{\sigma_1^i,\sigma_2^i,\cdots,\sigma_p^i}\Big)  & \text{if } 1\leqslant i\leqslant q, \\
\Big(\delta_0,\delta_1,\cdots,\delta_{n-1},\underbrace{1,1,\cdots,1}_{k \ \textrm{times}}, \overline{\sigma_1^i,\sigma_2^i,\cdots,\sigma_p^i}\Big)  & \text{if } q+1\leqslant i\leqslant p,
\end{array}
\right.
\end{equation}
where $(\sigma_1^i,\sigma_2^i,\cdots,\sigma_p^i)$ is defined in \eqref{equ:delta-i-y0} for $1\leqslant i\leqslant p$.

Note that $\zeta\in J(F_{\theta,\nu})=\partial K^\infty(F_{\theta,\nu})$ is the landing point of exactly $p$ drop-chains in $K^\infty(F_{\theta,\nu})$ whose preimages under $\phi_\theta$ land at $p$ distinct points $z_1,\cdots,z_p\in J(f_\nu)$ respectively (i.e., $\phi_\theta(z_i)=\zeta$ for all $1\leqslant i\leqslant p$).
These $p$ points are neither preimages of the $\beta$-fixed point nor preimages of the critical point of $f_{\theta}$. So each of them has a unique $S_{\theta}$-itinerary:
\begin{equation}
\varepsilon_\theta^{(i)}=
\left\{
\begin{array}{ll}
\Big(\varepsilon_0,\varepsilon_1,\cdots,\varepsilon_{n-1},\underbrace{1,1,\cdots,1}_{k \ \textrm{times}}, \overline{\omega_1^i,\omega_2^i,\cdots,\omega_p^i}\Big)  & \text{if } 1\leqslant i\leqslant q, \\
\Big(\varepsilon_0,\varepsilon_1,\cdots,\varepsilon_{n-1},\underbrace{0,0,\cdots,0}_{k \ \textrm{times}}, \overline{\omega_1^i,\omega_2^i,\cdots,\omega_p^i}\Big)  & \text{if } q+1\leqslant i\leqslant p,
\end{array}
\right.
\end{equation}
where $\varepsilon_j=1-\delta_j$ for $0\leqslant j\leqslant n-1$ and $\omega_j^i=1-\sigma_j^i$ for all $1\leqslant i,j\leqslant p$.
Let $s_i\in[0,1]$ be the angle with binary expansion corresponding to the itinerary $\delta_\nu^{(i)}$ and denote $t_i=1-s_i$, where $1\leqslant i\leqslant p$.
Then $z_i=\eta_\theta(t_i)$ for each $1\leqslant i\leqslant p$ and $w=\eta_\nu(-t_i)$ for all $1\leqslant i\leqslant p$.

To sum up the above four cases, we have proved that if $\phi_{\theta}(z)=\phi_{\nu}(w)$, then there exists an angle $t\in \T$ such that $z=\eta_{\theta}(t)$ and $w=\eta_{\nu}(-t)$.

\medskip
\textbf{Step 3.}
Conversely, consider two points $z\in\partial K(f_{\theta})$ and $w\in\partial K(f_{\nu})$ such that $z=\eta_{\theta}(t)$ and $w=\eta_{\nu}(-t)=\eta_{\nu}(1-t)$ for some $t\in \T$.
Denote $\zeta=\phi_{\theta}(z)$ and $\xi=\phi_{\nu}(w)$. We need to prove that $\zeta=\xi$.
The binary expansion $(\varepsilon_0,\varepsilon_1,\varepsilon_2,\cdots)$ of the angle $t$ gives an itinerary of $z$ with respect to $S_{\theta}$. Since $1-t$ has the binary expansion $0.\delta_0\delta_1\delta_2\cdots$, the point $w$ has an itinerary $(\delta_0,\delta_1,\delta_2,\cdots)$ with respect to $S_{\nu}$, where $\delta_i=1-\varepsilon_i$ for all $i$.
By \eqref{itinerary}, it follows that $\zeta$ and $\xi$ have the same $\Sigma_\theta$-itinerary (and also the same $\Sigma_\nu$-itinerary).
By Propositions \ref{prop:YZ-Siegel}, \ref{prop:parabolic} and the above Step 2, any two points in $J(F_{\theta,\nu})$ with the same $\Sigma_\theta$ or $\Sigma_\nu$-itinerary must coincide. This implies that $\zeta=\xi$.

\medskip
\textbf{Step 4.}
It remains to prove that $\phi_\theta\big(K(f_\theta)\big)\cup \phi_\nu\big(K(f_\nu)\big)=\EC$. In fact, $\phi_\theta\big(K(f_\theta)\big)$ contains all the Fatou components which are eventually iterated to the Siegel disk $\Delta^\infty$ centered at infinity and $\phi_\nu\big(K(f_\nu)\big)$ contains all the Fatou components which are eventually iterated to the immediate parabolic basins attaching at the origin. Moreover, by \eqref{equ:J-decom}, $J(F_{\theta,\nu})$ is contained in $\phi_\theta\big(K(f_\theta)\big)$ and also in $\phi_\nu\big(K(f_\nu)\big)$. It follows that $\phi_\theta\big(K(f_\theta)\big)\cup \phi_\nu\big(K(f_\nu)\big)$ contains the Fatou set and the Julia set of $F_{\theta,\nu}$ and hence it must cover the whole Riemann sphere.
\end{proof}

\begin{proof}[Proof of Theorem \ref{thm:mateable}]
The uniqueness is immediate since any quadratic rational map having a fixed Siegel disk of rotation number $\theta$ and a parabolic fixed point with multiplier $e^{2\pi\ii \nu}$ is conformally conjugate to the normalized map $F_{\theta,\nu}$ defined in \eqref{equ:F-theta-nu}.

To prove the conformally mateable of $f_\theta$ and $f_\nu$,  by definition in the introduction, it suffices to prove the existence of continuous maps $\phi_{\theta}:K(f_{\theta}) \to \EC$ and $\phi_{\nu}:K(f_{\nu}) \to \EC$ with the following properties:
\begin{enumerate}
  \item $\phi_{\theta} \circ f_{\theta} =F_{\theta,\nu}\circ \phi_{\theta}$ and $\phi_{\nu} \circ f_{\nu} =F_{\theta,\nu}\circ \phi_{\nu}$;
  \item $\phi_\theta\big(K(f_\theta)\big)\cup \phi_\nu\big(K(f_\nu)\big)=\EC$;
  \item $\phi_{\theta}$ and  $\phi_{\nu}$ are conformal in the interiors of $K(f_{\theta})$ and $K(f_{\nu})$;
  \item $\phi_{\theta}(z)=\phi_{\nu}(w)$ if and only if $z$ and $w$ are ray equivalent.
\end{enumerate}

Let $\phi_{\theta}: K(f_{\theta})\to \EC$ and $\phi_{\nu}:K(f_{\nu})\to \EC$ be the continuous maps obtained in Lemma \ref{semiconjugacy}. Then the first three properties have been proved. It remains to verify Property (d). In fact, still by Lemma \ref{semiconjugacy}, $\phi_{\theta}(z)=\phi_{\nu}(w)$ if and only if $z=\eta_{\theta}(t)$ and $w=\eta_{\nu}(-t)$ for some $t\in\T$, where $\eta_{\theta}$ and $\eta_{\nu}$ are Carath\'{e}odory loops of $J(f_{\theta})$ and $J(f_{\nu})$ respectively. This is equivalent to that $z$ and $w$ are ray equivalent by definition.
\end{proof}

\begin{rmk}
Let $g_c(z)=z^2+c$ be a quadratic polynomial in the hyperbolic component of the Mandelbrot set attaching at the root $e^{2\pi \ii \nu}$, where $\nu=q/p\in(0,1)\cap\Q$. Then $g_c$ is a hyperbolic polynomial which is topologically conjugate to the parabolic polynomial $f_\nu(z)=e^{2\pi\ii \nu}z+z^2$. By a completely similar proof of Theorem \ref{thm:mateable} (In particular, Lemma \ref{main-lem-WYZZ} is needed), $g_c$ and $f_\theta$ are conformally mateable for any bounded type $\theta$ and the mating is unique up to conjugacy by a M\"{o}bius map.
For example, when $e^{2\pi \ii \nu}=-1$ (i.e., $q/p=1/2$), the basilica polynomial $g_c(z)=z^2-1$ and $f_\theta$ are conformally mateable (see \cite{Yan15}).
\end{rmk}

%----------------------------------------------------------------------------------------------------------------
\bibliographystyle{amsalpha}
\bibliography{E:/Latex-model/Ref1}

\newcommand{\etalchar}[1]{$^{#1}$}
\providecommand{\bysame}{\leavevmode\hbox to3em{\hrulefill}\thinspace}
\providecommand{\MR}{\relax\ifhmode\unskip\space\fi MR }
% \MRhref is called by the amsart/book/proc definition of \MR.
\providecommand{\MRhref}[2]{%
  \href{http://www.ams.org/mathscinet-getitem?mr=#1}{#2}
}
\providecommand{\href}[2]{#2}
\begin{thebibliography}{WYZZ22}

\bibitem[AR16]{AR16}
M.~Aspenberg and P.~Roesch, \emph{Newton maps as matings of cubic polynomials},
  Proc. Lond. Math. Soc. (3) \textbf{113} (2016), no.~1, 77--112.

\bibitem[ARS22]{ARS22}
M.~Alhamed, L.~Rempe, and D.~Sixsmith, \emph{Geometrically finite
  transcendental entire functions}, J. Lond. Math. Soc. (2) \textbf{106}
  (2022), no.~2, 485--527.

\bibitem[AY09]{AY09}
M.~Aspenberg and M.~Yampolsky, \emph{Mating non-renormalizable quadratic
  polynomials}, Comm. Math. Phys. \textbf{287} (2009), no.~1, 1--40.

\bibitem[BEK{\etalchar{+}}12]{BEKMPRT12}
X.~Buff, A.~L. Epstein, S.~Koch, D.~Meyer, K.~Pilgrim, M.~Rees, and L.~Tan,
  \emph{Questions about polynomial matings}, Ann. Fac. Sci. Toulouse Math. (6)
  \textbf{21} (2012), no.~5, 1149--1176.

\bibitem[BF14]{BF14a}
B.~Branner and N.~Fagella, \emph{Quasiconformal surgery in holomorphic
  dynamics}, Cambridge Studies in Advanced Mathematics, vol. 141, Cambridge
  University Press, Cambridge, 2014.

\bibitem[BH12]{BH12}
S.~H. Boyd and C.~Henriksen, \emph{The {M}edusa algorithm for polynomial
  matings}, Conform. Geom. Dyn. \textbf{16} (2012), 161--183.

\bibitem[BV06]{BV06}
G.~Ble and R.~Valdez, \emph{Mating a {S}iegel disk with the {J}ulia set of a
  real quadratic polynomial}, Conform. Geom. Dyn. \textbf{10} (2006), 257--284.

\bibitem[Ch{\'{e}}12]{Che12}
A.~Ch{\'{e}}ritat, \emph{Tan {L}ei and {S}hishikura's example of non-mateable
  degree 3 polynomials without a {L}evy cycle}, Ann. Fac. Sci. Toulouse Math.
  (6) \textbf{21} (2012), no.~5, 935--980.

\bibitem[CPT12]{CPT12}
G.~Cui, W.~Peng, and L.~Tan, \emph{On a theorem of {R}ees-{S}hishikura}, Ann.
  Fac. Sci. Toulouse Math. (6) \textbf{21} (2012), no.~5, 981--993.

\bibitem[DH85]{DH85a}
A.~Douady and J.~H. Hubbard, \emph{\'{E}tude dynamique des polyn\^{o}mes
  complexes. {P}artie {II}}, Publications Math\'{e}matiques d'Orsay, vol.~85,
  Universit\'{e} de Paris-Sud, Orsay, 1985.

\bibitem[DH93]{DH93}
\bysame, \emph{A proof of {T}hurston's topological characterization of rational
  functions}, Acta Math. \textbf{171} (1993), no.~2, 263--297.

\bibitem[Dou83]{Dou83}
A.~Douady, \emph{Syst\`emes dynamiques holomorphes}, Bourbaki seminar, {V}ol.
  1982/83, Ast\'{e}risque, vol. 105, Soc. Math. France, Paris, 1983,
  pp.~39--63.

\bibitem[Dou87]{Dou87}
\bysame, \emph{Disques de {S}iegel et anneaux de {H}erman}, Bourbaki seminar,
  {V}ol. 1986/87, Ast\'{e}risque, no. 152-153, Soc. Math. France, Paris, 1987,
  pp.~151--172.

\bibitem[Dud11]{Dud11}
D.~Dudko, \emph{Matings with laminations}, arXiv: 1112.4780, 2011.

\bibitem[Eps98]{Eps98}
A.~Epstein, \emph{Counterexamples to the quadratic mating conjecture},
  manuscript, 1998.

\bibitem[G{\'{S}}03]{GS03}
J.~Graczyk and G.~{\'{S}}wi{\c{a}}tek, \emph{Siegel disks with critical points
  in their boundaries}, Duke Math. J. \textbf{119} (2003), no.~1, 189--196.

\bibitem[Ha{\"{\i}}98]{Hai98}
P.~Ha{\"{\i}}ssinsky, \emph{Chirurgie parabolique}, C. R. Acad. Sci. Paris
  S\'{e}r. I Math. \textbf{327} (1998), no.~2, 195--198.

\bibitem[Her87]{Her87}
M.~R. Herman, \emph{Conjugaison quasi-symm\'{e}trique des hom\'{e}omorphismes
  analytiques du cercle \`{a} des rotations}, preliminary manuscript, 1987.

\bibitem[HT04]{HT04}
P.~Ha{\"{\i}}ssinsky and L.~Tan, \emph{Convergence of pinching deformations and
  matings of geometrically finite polynomials}, Fund. Math. \textbf{181}
  (2004), no.~2, 143--188.

\bibitem[Jun17]{Jun17}
W.~Jung, \emph{The {T}hurston algorithm for quadratic matings}, arXiv:
  1706.04177, 2017.

\bibitem[LM97]{LM97}
M.~Lyubich and Y.~Minsky, \emph{Laminations in holomorphic dynamics}, J.
  Differential Geom. \textbf{47} (1997), no.~1, 17--94.

\bibitem[Luo95]{Luo95}
J.~Luo, \emph{Combinatorics and holomorphic dynamics: {C}aptures, matings,
  {N}ewton's method}, Ph.D. thesis, Cornell University, 1995.

\bibitem[Luo22]{Luo22}
Y.~Luo, \emph{On geometrically finite degenerations {II}: {C}onvergence and
  divergence}, Trans. Amer. Math. Soc. \textbf{375} (2022), no.~5, 3469--3527.

\bibitem[Ma19]{Ma19}
L.~Ma, \emph{A continuity result on quadratic matings with respect to
  parameters of odd denominator rationals}, Math. Proc. Cambridge Philos. Soc.
  \textbf{167} (2019), no.~2, 369--388.

\bibitem[Ma{\~{n}}93]{Man93}
R.~Ma{\~{n}}{\'{e}}, \emph{On a theorem of {F}atou}, Bol. Soc. Brasil. Mat.
  (N.S.) \textbf{24} (1993), no.~1, 1--11.

\bibitem[McM94]{McM94b}
C.~T. McMullen, \emph{Complex dynamics and renormalization}, Annals of
  Mathematics Studies, vol. 135, Princeton University Press, Princeton, NJ,
  1994.

\bibitem[Mey14]{Mey14}
D.~Meyer, \emph{Unmating of rational maps: sufficient criteria and examples},
  Frontiers in complex dynamics, Princeton Math. Ser., vol.~51, Princeton Univ.
  Press, Princeton, NJ, 2014, pp.~197--233.

\bibitem[Mil93]{Mil93}
J.~Milnor, \emph{Geometry and dynamics of quadratic rational maps, \textup{with
  an appendix by the author and Lei Tan}}, Experiment. Math. \textbf{2} (1993),
  no.~1, 37--83.

\bibitem[Mil04]{Mil04}
\bysame, \emph{Pasting together {J}ulia sets: a worked out example of mating},
  Experiment. Math. \textbf{13} (2004), no.~1, 55--92.

\bibitem[Mil06]{Mil06}
\bysame, \emph{Dynamics in one complex variable}, third ed., Annals of
  Mathematics Studies, vol. 160, Princeton University Press, Princeton, NJ,
  2006.

\bibitem[Pet96]{Pet96}
C.~L. Petersen, \emph{Local connectivity of some {J}ulia sets containing a
  circle with an irrational rotation}, Acta Math. \textbf{177} (1996), no.~2,
  163--224.

\bibitem[Pet99]{Pet99}
\bysame, \emph{No elliptic limits for quadratic maps}, Ergodic Theory Dynam.
  Systems \textbf{19} (1999), no.~1, 127--141.

\bibitem[PM12]{PM12}
C.~L. Petersen and D.~Meyer, \emph{On the notions of mating}, Ann. Fac. Sci.
  Toulouse Math. (6) \textbf{21} (2012), no.~5, 839--876.

\bibitem[PR21]{PR21}
C.~L. Petersen and P.~Roesch, \emph{The parabolic {M}andelbrot set}, arXiv:
  2107.09407, 2021.

\bibitem[Ree86]{Ree86c}
M.~Rees, \emph{Realization of matings of polynomials as rational maps of degree
  two}, Manuscript, 1986.

\bibitem[Ree92]{Ree92}
\bysame, \emph{A partial description of parameter space of rational maps of
  degree two. {I}}, Acta Math. \textbf{168} (1992), no.~1-2, 11--87.

\bibitem[Roe10]{Roe10}
P.~Roesch, \emph{Cubic polynomials with a parabolic point}, Ergodic Theory
  Dynam. Systems \textbf{30} (2010), no.~6, 1843--1867.

\bibitem[Sha13]{Sha13}
T.~Sharland, \emph{Constructing rational maps with cluster points using the
  mating operation}, J. Lond. Math. Soc. (2) \textbf{87} (2013), no.~1,
  87--110.

\bibitem[Sha19]{Sha19}
\bysame, \emph{Matings of cubic polynomials with a fixed critical point, {P}art
  {I}: {T}hurston obstructions}, Conform. Geom. Dyn. \textbf{23} (2019),
  205--220.

\bibitem[Sha23]{Sha23}
\bysame, \emph{Matings of cubic polynomials with a fixed critical point, {P}art
  {II}: {$\alpha $}-symmetry of limbs}, J. Difference Equ. Appl. \textbf{29}
  (2023), no.~2, 208--234.

\bibitem[Shi00]{Shi00b}
M.~Shishikura, \emph{On a theorem of {M}. {R}ees for matings of polynomials},
  The {M}andelbrot set, theme and variations, London Math. Soc. Lecture Note
  Ser., vol. 274, Cambridge Univ. Press, Cambridge, 2000, pp.~289--305.

\bibitem[Sie42]{Sie42}
C.~L. Siegel, \emph{Iteration of analytic functions}, Ann. of Math. (2)
  \textbf{43} (1942), 607--612.

\bibitem[ST00]{ST00b}
M.~Shishikura and L.~Tan, \emph{A family of cubic rational maps and matings of
  cubic polynomials}, Experiment. Math. \textbf{9} (2000), no.~1, 29--53.

\bibitem[Tan92]{Tan92}
L.~Tan, \emph{Matings of quadratic polynomials}, Ergodic Theory Dynam. Systems
  \textbf{12} (1992), no.~3, 589--620.

\bibitem[Tan97]{Tan97}
\bysame, \emph{Branched coverings and cubic {N}ewton maps}, Fund. Math.
  \textbf{154} (1997), no.~3, 207--260.

\bibitem[Thu84]{Thu84}
W.~P. Thurston, \emph{On the combinatorics and dynamics of iterated rational
  maps}, Preprint, 1984.

\bibitem[Tim08]{Tim08}
V.~Timorin, \emph{The external boundary of {$M_2$}}, Holomorphic dynamics and
  renormalization, Fields Inst. Commun., vol.~53, Amer. Math. Soc., Providence,
  RI, 2008, pp.~225--266.

\bibitem[TY96]{TY96}
L.~Tan and Y.~Yin, \emph{Local connectivity of the {J}ulia set for
  geometrically finite rational maps}, Sci. China Ser. A \textbf{39} (1996),
  no.~1, 39--47.

\bibitem[Why42]{Why42}
G.~T. Whyburn, \emph{Analytic {T}opology}, American Mathematical Society
  Colloquium Publications, v. 28, American Mathematical Society, New York,
  1942.

\bibitem[Wil16]{Wil16}
M.~Wilkerson, \emph{Subdivision rule constructions on critically preperiodic
  quadratic matings}, New York J. Math. \textbf{22} (2016), 1055--1084.

\bibitem[Wil19]{Wil19}
\bysame, \emph{Thurston's algorithm and rational maps from quadratic polynomial
  matings}, Discrete Contin. Dyn. Syst. Ser. S \textbf{12} (2019), no.~8,
  2403--2433.

\bibitem[WYZZ22]{WYZZ22}
S.~Wang, F.~Yang, G.~Zhang, and Y.~Zhang, \emph{Local connectivity for the
  {J}ulia sets of rational maps with {S}iegel disks}, arXiv: 2106.07450v4,
  2022.

\bibitem[Yam99]{Yam99}
M.~Yampolsky, \emph{Complex bounds for renormalization of critical circle
  maps}, Ergodic Theory Dynam. Systems \textbf{19} (1999), no.~1, 227--257.

\bibitem[Yan15]{Yan15}
J.~Yang, \emph{Mating the basilica with a {S}iegel disk}, Conform. Geom. Dyn.
  \textbf{19} (2015), 258--297.

\bibitem[Yin92]{Yin92}
Y.~Yin, \emph{On the {J}ulia sets of quadratic rational maps}, Complex
  Variables Theory Appl. \textbf{18} (1992), no.~3-4, 141--147.

\bibitem[YZ01]{YZ01}
M.~Yampolsky and S.~Zakeri, \emph{Mating {S}iegel quadratic polynomials}, J.
  Amer. Math. Soc. \textbf{14} (2001), no.~1, 25--78.

\bibitem[Zha11]{Zha11}
G.~Zhang, \emph{All bounded type {S}iegel disks of rational maps are
  quasi-disks}, Invent. Math. \textbf{185} (2011), no.~2, 421--466.

\bibitem[Zha22]{Zha22}
\bysame, \emph{Topological characterisation of rational maps with {S}iegel
  disks}, Math. Proc. Cambridge Philos. Soc. \textbf{172} (2022), no.~1, 1--41.

\end{thebibliography}

\end{document}